\tikzstyle{none}=[inner sep=0pt]
\definecolor{hexcolor0xfefdfd}{rgb}{0.996,0.992,0.992}
\theoremstyle{plain}
\newtheorem*{main-theorem}{Theorem}
\newtheorem{theorem}[equation]{Theorem}
\newtheorem{prop}[equation]{Proposition}
\newtheorem{corollary}[equation]{Corollary}
\newtheorem*{claim*}{Claim}
\newtheorem{lemma}[equation]{Lemma}
\newtheorem*{condition}{Condition}
\theoremstyle{definition}
\newtheorem{definition}[equation]{Definition}
\newtheorem{example}[equation]{Example}
\newtheorem{remark}[equation]{Remark}
\newtheorem{conjecture}[equation]{Conjecture}
\newtheorem{question}[equation]{Question}
\numberwithin{equation}{subsection}
\DeclareMathOperator\spec{Spec}
\DeclareMathOperator{\Pic}{Pic}
\DeclareMathOperator{\Nef}{Nef}
\DeclareMathOperator{\NS}{NS}
\DeclareMathOperator{\Semiample}{Semiample}
\DeclareMathOperator{\card}{card}
\DeclareMathOperator{\Easy}{Easy}
\newcommand{\Supp}{\operatorname{Supp}}
\def\DD{\mathfrak{D}}
\def\ra{\rightarrow}
\def\fnef{\mathrm{F-nef}}
\def\dem{\mathrm{dem}}
\def\PS{\mathrm{PS}}
\def\PSFN{\mathrm{PSFN}}
\def\Sm{\mathfrak{S}}
\newcommand\M{\overline{M}}
\newcommand{\nhalf}{\lfloor n/2\rfloor}
\newcommand\modp[2]{\langle #1\rangle_{#2}}
\renewcommand{\sl}{\mathfrak{sl}}
\def\A{\mathcal{A}}
\def\B{\mathcal{B}}
\def\co{\colon\thinspace} 
\def\QQ{\mathbb{Q}}
\def\PP{\mathbb{P}}
\def\ZZ{\mathbb{Z}}
\def\cO{\mathcal{O}}
\begin{document}

\title{Semiampleness criteria for divisors on $\M_{0,n}$}

\begin{abstract}
We develop new characteristic-independent 
combinatorial criteria for semiampleness of divisors on $\M_{0,n}$.
As an application, we associate to a cyclic rational quadratic form satisfying a certain balancedness condition 
an infinite sequence of semiample line bundles.
We also give several sufficient and effective conditions for an $\Sm_n$-symmetric divisor on $\M_{0,n}$ 
to be semiample or nef. 

\end{abstract}
\author{Maksym Fedorchuk}
\maketitle

\setcounter{tocdepth}{1}
\tableofcontents

\section{Introduction}
Let $\M_{0,n}$ be the Deligne-Mumford moduli space. 
It is a smooth projective scheme
over $\spec \ZZ$, which in addition to representing the functor of stable $n$-pointed rational curves,
has misleadingly simple (but highly non-obvious)
blow-up constructions out of $\PP^{n-3}$ and $(\PP^1)^{n-3}$;
see \cite[p.546, and Theorems 1, 2]{keel} and \cite[Chapter 4]{kap1} for a precise functorial
definition and two different blow-up constructions of $\M_{0,n}$.

The Picard group of $\M_{0,n}$ is well-understood (see \S\ref{S:effective-criterion}).
However, many finer questions about the birational geometry of $\M_{0,n}$
remain highly elusive. 
In particular, the following two open questions about divisor classes 
on $\M_{0,n}$ form the context for the present work:
\begin{question}\label{Q1}
What are the nef divisors on $\M_{0,n}$?
\end{question}
\begin{question}\label{Q2}
Is every nef divisor on $\M_{0,n}$ semiample?
\end{question}
The famous \emph{F-conjecture} purports to give an answer to Question \ref{Q1} 
in terms of the F-curves, which are
the $1$-dimensional boundary strata in $\M_{0,n}$ (see \S \ref{S:boundary}).
To state it, recall that a line bundle on $\M_{0,n}$ is \emph{F-nef} if 
it intersects
every F-curve non-negatively.
\begin{conjecture}[
{\cite[Question 1.1]{KMc}, \cite[Conjecture 0.2]{GKM}}]
\label{F-conjecture}
A line bundle on $\M_{0,n}$ is nef if and only if it is F-nef. 
\end{conjecture}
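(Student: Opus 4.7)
The direction ``nef $\Rightarrow$ F-nef'' is immediate, since F-curves are honest irreducible $1$-cycles in $\M_{0,n}$ and a nef divisor pairs non-negatively with every such curve. I would focus all effort on the converse. The plan is an induction on $n$. For $n\le 6$ the Mori cone is accessible by direct computation, e.g.\ using Kapranov's realization of $\M_{0,n}$ as an iterated blow-up of $\PP^{n-3}$, so the base case is a finite check. For the inductive step, given an F-nef divisor $D$ on $\M_{0,n}$, I would restrict $D$ to each irreducible boundary divisor, which is a product of two smaller moduli spaces of the form $\M_{0,k}\times\M_{0,n-k+2}$. Since F-curves on either factor pull back to F-curves on $\M_{0,n}$, each restriction is again F-nef, hence nef by the inductive hypothesis.

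To promote boundary-nefness to global nefness, my preferred route would be to strengthen the conclusion all the way to \emph{semiampleness} --- affirming Question~\ref{Q2} simultaneously --- and then invoke the combinatorial criteria developed in the body of the paper. Concretely, from the numerical data of an F-nef $D$ one would try to construct an explicit morphism $f\co\M_{0,n}\to X$ and an ample class $A$ on $X$ with $f^{*}A\sim D$, verifying the hypotheses of the semiampleness criterion by a compatibility check indexed by F-curves --- exactly the data supplied by F-nefness. A direct alternative is to pair $D$ against an arbitrary irreducible curve $C\subset\M_{0,n}$: if $C$ lies in a boundary divisor, induction closes the case; if $C$ is interior, one would try to degenerate $C$ through a $1$-parameter family to a cycle supported on the boundary while controlling intersection numbers with $D$.

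The main obstacle is the classical one that has kept this conjecture open for two decades: neither route is known to succeed in general. There is no procedure to certify that $\overline{\Ne}(\M_{0,n})$ is generated by F-curves, and no general degeneration of interior curves to boundary cycles with controlled numerics; on the semiampleness side, the compatibilities forced by F-nefness alone are not at present enough to assemble a contracting morphism from the combinatorial data. The realistic ambition must therefore be to leverage the new criteria to settle large explicit families of F-nef divisors --- such as those arising from cyclic quadratic forms, as advertised in the abstract --- rather than to close the conjecture in full.
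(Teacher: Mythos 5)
This statement is labeled a \emph{conjecture} in the paper, and it is in fact the famous open F-conjecture of Keel--McKernan and Gibney--Keel--Morrison: the paper supplies no proof of it, and none is known. Your proposal correctly proves the trivial direction (nef $\Rightarrow$ F-nef) and then, to your credit, explicitly concedes that the converse cannot be closed by either of your proposed routes; so what you have written is not a proof, and it should not be graded as one. Your sketch of the inductive strategy is a fair summary of the classical approach, and your diagnosis of where it breaks --- no control over irreducible curves in the interior $M_{0,n}$, and no known degeneration of such curves to boundary cycles with controlled intersection numbers --- is exactly the standard obstruction. Note, though, two factual slips: the F-conjecture is verified for $n\le 7$ (Larsen), not merely $n\le 6$; and the restriction of a divisor to a boundary divisor $\M_{0,k+1}\times\M_{0,n-k+1}$ being nef does not by itself yield global nefness unless the divisor is also linearly equivalent to an effective combination of boundary divisors (the ``Effective Dichotomy'' the paper uses requires effectivity supported on the boundary, not just F-nefness).

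What the paper actually does is orthogonal to proving the conjecture: it develops sufficient combinatorial criteria (cyclic effectivity via balanced quadratic forms, tree-effectivity) that certify semiampleness --- hence nefness --- for specific families of parasymmetric F-nef divisors, and uses these to verify the \emph{symmetric} F-conjecture in arbitrary characteristic for $n\le 16$. Your closing paragraph, where you scale back the ambition to settling explicit families via the paper's criteria, is the correct reading of what is achievable; but as a response to the prompt ``prove this statement,'' the honest answer is that the statement is an open problem and the paper presents it as such.
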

In what follows, the \emph{F-cone} will be the set of F-nef divisor classes in $\NS(\M_{0,n})$.
Thus, the F-conjecture postulates that $\Nef(\M_{0,n})$
equals to the finite polyhedral F-cone. An analogous conjecture has also been formulated for $\M_{g,n}$
(see \cite[Conjecture 0.2]{GKM}, \cite[p.1228]{Gib}), but
the positive genus case follows from the genus $0$
case by a powerful result of Gibney, Keel, and Morrison 
\cite[Theorem 0.7]{GKM} (which holds in every characteristic besides $2$). 

The F-conjecture for $\M_{0,n}$  
has been verified only for $n\leq 7$ \cite{larsen}, and not much is known for higher $n$.
The situation for symmetric line bundles is slightly better. 
The \emph{symmetric F-conjecture}, which states that an $\Sm_n$-symmetric 
line bundle on $\M_{0,n}$ is nef if and only if it is F-nef, 
was proven by Gibney for all $n\leq 24$ \cite{Gib},
greatly extending an earlier work of Keel and McKernan, who used Mori-theoretic
techniques to prove the symmetric F-conjecture for $n\leq 11$ \cite{KMc}.
(Because of their reliance on the contraction theorem, the results of \cite{KMc} and \cite{Gib}
are restricted to characteristic $0$.) As a corollary, Gibney establishes the F-conjecture
for $\M_{22}$ and $\M_{24}$, the first two moduli spaces of curves known 
to be of general type.

Since every nef line bundle on a Mori Dream Space is semiample by definition \cite{hu-keel}, 
Question \ref{Q2} would have the answer ``All'' if the following question had
an affirmative answer:
\begin{question}[{\cite[Question 3.2]{hu-keel}}]
\label{Q:hu-keel}
Is $\M_{0,n}$ a Mori Dream Space?
\end{question}
The above question attracted a lot of attention and was sometimes formulated
as a conjecture until Castravet and Tevelev
proved that $\M_{0,n}$ is not a Mori Dream Space for $n\geq 134$ \cite{castravet-tevelev-MDS},
a result later improved to $n\geq 13$ by Gonz\'{a}lez and Karu \cite{gonzalez-karu}.
We note however that there is no known example of a nef, but not semiample divisor on $\M_{0,n}$;
and that it is still an open problem whether $\M_{0,n}/\Sm_n$ is a Mori Dream Space.

\subsection{Approaches to semiampleness}
\label{S:4-approaches}
Before we describe our results, we recall
that there are roughly four methods to proving semiampleness of a divisor on $\M_{0,n}$
(the first three of which are applicable to almost any projective variety). The first 
method is by proving ampleness using intersection theory. The second method is by 
proving nefness and then applying Kawamata's base point freeness
theorem. 
Both of these methods are illustrated in the proof of ampleness of $\psi-\Delta$ in \cite[Lemma 3.6]{KMc}.
The third method is by constructing regular morphisms from $\M_{0,n}$ to other varieties, 
which often have a modular interpretation (e.g., as GIT quotients), 
and by pulling back ample divisors from the target (see \cite{alexeev-swinarski,gian,GJMS}). Finally, Fakhruddin showed that in characteristic $0$ there is an enormous collection of 
semiample divisors on $\M_{0,n}$ coming from the theory of conformal blocks \cite{fakh}. 
In recent years, properties of conformal blocks divisors on $\M_{0,n}$
have been extensively studied in \cite{fakh, agss, ags, gian-gib, conf-block}. 
It is an open question to what extent the conformal blocks divisors cover $\Nef(\M_{0,n})$.

Even if we restrict ourselves to $\Sm_n$-symmetric divisors on $\M_{0,n}$, 
of the four methods described above, 
only the second can be considered effective in the sense that given a symmetric divisor $D$, 
there is an algorithm 
for determining whether the method applies to prove that $D$ is semiample. 
However, Kawamata's base point freeness theorem is restricted to characteristic $0$,
and still requires one to first prove that $D$ is nef. Even then, the cone of log canonical F-nef divisors,
that is, divisors that are semiample (assuming the F-conjecture) by Kawamata's base point freeness theorem, 
is well-known to be strictly contained inside the F-cone. 

\subsection{Our results}
\label{S:our-results}
In this work, we do not take any position on the open questions discussed above.
Instead, with the view toward these problems, 
we introduce a new combinatorial framework for proving semiampleness
(respectively, nefness) for a certain family of divisors on $\M_{0,n}$
over $\ZZ$ (respectively, over fields
of arbitrary characteristic). 

Our method is elementary in that it relies only on Keel's relations in $\Pic(\M_{0,n})$, 
but is also characteristic-independent, with most of the results holding over $\spec(\ZZ)$.
For $\Sm_n$-symmetric divisors we give several effective numerical semiampleness criteria. 
Namely, given a divisor $D=\sum_{r=2}^{\lfloor n/2\rfloor} a_r\Delta_r$ on $\M_{0,n}$,
our criteria verify semiampleness of $D$ by checking whether its coefficients 
satisfy finitely many linear inequalities  (see Theorem \ref{T:semiample}) or finitely many quadratic 
inequalities (see Theorem \ref{T:2nd-semiample}). The  
running times of the algorithms implementing these criteria 
vary between $o(2^n)$ and $o(n^4)$.

Because of the effectivity of our criteria, and because semiample divisors that we produce often 
lie on the boundary of $\Nef(\M_{0,n})$ and are generally not log canonical,
our method is complementary to the four methods described in \S\ref{S:4-approaches}. 
There is also a non-trivial overlap. 
For example, the fact that the quadratic form  
\[
x_0^2+x_1^2+\cdots+x_{m-1}^2
\]
is balanced in the sense of Definition \ref{D:balanced} gives a new, 
characteristic-independent 
proof of semiampleness of all $\sl_m$ level $1$ conformal blocks divisors studied in \cite{gian-gib}
from the point of view of GIT and conformal blocks; see Example \ref{E:typeA},
and \S\ref{S:examples-CB} for other connections to the conformal blocks divisors.

Questions about divisors on $\M_{0,n}$ tend to be much more difficult
than questions about divisors on $\M_{0,n}/\Sm_n$, or, equivalently,
questions about symmetric divisors on $\M_{0,n}$. In particular,
from the perspective of the F-conjecture, 
there are two downsides to working with the full Picard group
of $\M_{0,n}$: 
\begin{enumerate}
\item An absence of a natural basis for $\Pic(\M_{0,n})$.
\item The combinatorial complexity of the F-cone inside $\Pic(\M_{0,n})$.
\end{enumerate}
The first issue will be discussed in \S\ref{S:effective-criterion}. To illustrate the second issue,
we recall that on $\M_{0,5}$ there are $5$ extremal rays, and
on $\M_{0,6}$ there are $3,190$ extremal rays of the F-cone, 
and as far as we know the number of extremal rays of the F-cone 
on $\M_{0,7}$ is unknown. 

In contrast, the subgroup of symmetric divisors in $\Pic(\M_{0,n})$
has a natural basis given by the boundary divisors $\Delta_2, \Delta_3, \dots, \Delta_{\nhalf}$,
which incidentally generate the cone of effective divisors on $\M_{0,n}/\Sm_n$ \cite{KMc}.
The combinatorial complexity of the symmetric F-cone also grows more slowly
with $n$. For example, the symmetric
F-cone has only $1$, respectively $2$, extremal rays for $n=5$, respectively $n=6$. 
For $n=25$, the symmetric F-cone has $28,334$ extremal rays, but can still be computed
in under $3$ minutes on a personal computer. 

A major downside of working with symmetric divisors is that the restriction 
to a boundary of a symmetric divisor is no longer symmetric, whereas
many arguments concerning nefness of divisors on $\M_{0,n}$ are inductive and
make use of the restriction to the boundary. For example, 
one of the more involved parts of the proof of the symmetric F-conjecture for $n\leq 24$
involves verifying nefness of the \emph{necessary boundary restrictions} \cite[Definition 4.6]{Gib}
of F-nef symmetric divisors. 


A simple, but crucial idea behind this work is that we should consider
 families of divisors on $\M_{0,n}$ that are closed under the operation of
restriction to the boundary 
while that are not too far from symmetric divisors. 
Plainly, we want to consider all the symmetric divisors and all the pullbacks
of the symmetric divisors under various attaching morphisms.
To formalize this idea, we find it convenient to use the language of symmetric
functions on finite cyclic groups (or more generally on abelian groups) to 
describe the family of divisors we are working with. If $G$ is an abelian group
and $f\co G\to \ZZ$ is a symmetric function, then for every $n$-tuple $(d_1,\dots,d_n)$ of elements
of $G$,
we consider the divisor of the following form:
\begin{equation}\label{E:divisor-intro}
\DD \bigl(G, f; (d_1,\dots,d_n)\bigr):=
\sum_{i=1}^n f(d_i) \psi_i -\sum_{I,J} f\Bigl(\sum_{i\in I} d_i \Bigr)\Delta_{I,J}.
\end{equation}
To describe when the divisor \eqref{E:divisor-intro} is F-nef,
we introduce a notion of an F-nef function (Definition \ref{D:F-nef}), 
defined by a numerical condition reminiscent of the F-nefness condition on a symmetric divisor.
The correspondence between symmetric F-nef divisors and F-nef 
functions is not one-to-one. Instead, there are infinitely many F-nef
divisors that are associated to a single F-nef function. 
Importantly, every symmetric F-nef divisor does arise from some
F-nef function. 
The language of symmetric functions and
accompanying notation is introduced in Section \ref{S:divisors-from-functions}, 
where we also make explicit factorization properties of our divisors.   

An important outcome of the new viewpoint is that
every symmetric F-nef divisor can be put into an infinite family 
of (not necessarily symmetric) F-nef divisors associated to the same 
F-nef function. In Section \ref{S:nefness-bpf}, 
we introduce three effectivity notions for F-nef functions, namely 
effectivity, tree-effectivity, and cyclic effectivity. We show that we can prove semiampleness (respectively,
nefness) of all the infinitely many divisors associated to a single function by proving the function
to be tree-effective (respectively, effective); see Theorem \ref{P:semiample}. 

In Section \ref{S:cyclic}, we reinterpret the condition for a function $f\co \ZZ_m \to \QQ$
to be cyclically effective in terms of a balancedness condition on a certain 
quadratic form associated to $f$. As a result, we obtain an ``if and only if'' criterion 
for cyclic effectivity of functions on $\ZZ_m$ and an infinitude of semiample divisors. 
In Section \ref{S:2nd-semiampleness}, we prove a sufficient condition 
for a function to be tree-effective, thus obtaining another infinite collection of 
semiample divisors. 
In Section \ref{S:new-nef}, we explain how to obtain new effective functions from
certain cyclically effective functions. Thus we obtain new infinite families of nef divisors. 
Finally, in Section \ref{S:examples}, we collect some examples and 
computational results, and 
obtain a proof of the symmetric F-conjecture in arbitrary characteristic for $n\leq 16$.



\subsubsection*{Notation and conventions:} 
We use freely standard divisor theory of $\M_{0,n}$,
as in \cite{KMc} and \cite{GKM}, where the reader can also 
find an extended discussion of F-nefness for divisors on $\M_{0,n}$.
We let $[n]:=\{1,\dots, n\}$ and write $\Sm_n$ for the symmetric group on $[n]$.

\section{Preliminaries}
\label{S:preliminary}

\subsection{Boundary stratification of $\M_{0,n}$}
\label{S:boundary}
In this subsection, we recall, in the language that will be used throughout this work, 
well-known facts about the stratification of $\M_{0,n}$ by topological type;
see \cite[Section 1]{AC}, 
or more recent expository 
articles \cite[p.17]{ian-mori} or \cite[p.650]{vakil-taut} for more details.
\subsubsection*{Dual graphs}
Given $[C]\in \M_{0,n}$, \emph{the dual graph of $C$} is the tree graph $G(C)$ such that 
\begin{itemize}
\item Internal nodes of $G(C)$ correspond 
to the irreducible components of $C$.
\item Internal edges of $G(C)$ correspond to the nodes of $C$.
\item The leaves of $G(C)$ are labeled by $[n]$ and correspond to the marked points. 
\item Each internal node of $G(C)$ has valency at least $3$. 
\end{itemize}
We will refer to any tree on $n$ leaves satisfying the above conditions as \emph{$[n]$-labeled}. 

If $T$ is an $[n]$-labeled tree, then every edge $e$ 
of $T$ defines in a natural way 
the \emph{$e$-partition} $[n]=I \sqcup J$ such that $I$ and $J$ are the sets of leaves of $T$ lying 
on the two connected graph components resulting from deleting $e$ from $T$. 
The $e$-partitions, where $e$ ranges over the edges of a fixed $T$, will be called $T$-partitions.

With the above terminology, 
the locally closed boundary strata of $\M_{0,n}$ are in bijection with the set of all $[n]$-labeled trees:
The boundary stratum $M_T$ associated to an $[n]$-labeled tree $T$ consists of all $[C]\in \M_{0,n}$
such that $G(C) \simeq T$. For example, if $T_0$ is the $[n]$-labeled tree with one internal node, then $M_{T_0}=M_{0,n}$
is the interior of $\M_{0,n}$.

We note for the future that $[C]\in \Delta_{I,J}$ if and only if $I \sqcup J$ is a $G(C)$-partition. 
Consequently, we have that 
\begin{equation*}
\M_{T}=\bigcap_{\substack{I\sqcup J=[n] \text{ is a proper} \\ \text{  $T$-partition}}} \Delta_{I,J} \, .
\end{equation*}

\subsubsection*{Factorization of the restriction of a line bundle to the boundary}
Suppose $T$ is an $[n]$-labeled tree.
Let $V(T)$ be the set of internal nodes of $T$ and, for every $x\in V(T)$,
let $v(x)$ denote the valence of $x$. The closed boundary stratum $\M_T$ 
has the following product description:
\begin{equation}\label{E:boundary-product}
\M_T\simeq \prod_{x\in V(T)} \M_{0,v(x)}\, .
\end{equation}
If $L$ is a line bundle on $\M_{0,n}$, then 
the restriction of $L$ to $\M_T$
can be written as a product of pullbacks of line bundles on $\M_{0,v(x)}$. 
Namely,
\[
L\vert_{\M_T}=\bigotimes_{x\in V(T)} \operatorname{pr}_x^*L_x,
\]
where $\operatorname{pr}_x\co \M_T \to \M_{0,v(x)}$ is the projection and 
$L_x\in \Pic(\M_{0,v(x)})$;
this follows, for example, from \cite[Lemma 1.1]{GKM}. 
We will call $L_x$ \emph{the factors} of the restriction 
of $L$ to $\M_T$. 
Importantly, each factor $L_x$ can be realized in the following way.
Let $j_x \co \M_{0,v(x)} \to \M_{0,n}$ be (any) attaching morphism associated to the $v(x)$
edges emanating from $x$. Then $L_x=j_x^*(L)$. 

\subsubsection*{Zero and one-dimensional boundary strata}
\label{S:01}
Let $c$ be the number of the 
internal edges of an $[n]$-labeled tree $T$. Then the codimension of $\M_T$ in $\M_{0,n}$ 
equals $c$. In particular, the $0$-dimensional boundary strata of $\M_{0,n}$ are in bijection
with the set of $[n]$-labeled unrooted binary trees. We denote the set of $0$-dimensional strata of 
$\M_{0,n}$ by $B(n)$. It is well-known (see e.g., \cite[p.395]{sturmfels-speyer}) that \[\card(B(n))=(2n-5)!!\]
The $1$-dimensional closed boundary strata in $\M_{0,n}$ are called \emph{F-curves}. The numerical
classes of F-curves are in bijection with partitions 
\[
[n]=I\sqcup J\sqcup K\sqcup L
\]
of the set of marked points into $4$ non-empty subsets. The F-conjecture \ref{F-conjecture} 
is equivalent to the statement that the classes of F-curves generate the cone of curves of $\M_{0,n}$.

\subsection{Weighted graphs}
\label{S:weighted-graphs}
Our eventual goal is to arrive at combinatorial criteria for semiampleness of certain 
divisors on $\M_{0,n}$, which will be formally introduced in Section \ref{S:divisors-from-functions}. 
These criteria will be stated Section \ref{S:nefness-bpf} in the language of weightings on complete
graphs, which we now introduce. 

We denote by $\Gamma([n])$ the complete graph whose vertices are identified
with $[n]$ and by $E([n])$ the set of all edges
of $\Gamma([n])$.  We write $(i\sim j)$ to denote the edge joining vertices $i$ and $j$. 
A \emph{weighting} or a \emph{weight function} on $\Gamma([n])$ is a function $w\co E([n]) \ra \QQ$.
We write $w(i\sim j)$ to denote the weight of the edge $(i\sim j)$.
Given a weight function $w$ on $\Gamma([n])$,
we make the following definitions:
\begin{enumerate}
\item The \emph{$w$-flow through a vertex} $k\in [n]$ is defined to be 
\[
w(k):=\sum_{i\neq k} w(k\sim i).
\]
\item The \emph{$w$-flow across a partition} 
$I\sqcup J=[n]$ is defined to be 
\[
w(I\mid J):=\sum_{i\in I, j\in J} w(i\sim j).
\] 
\end{enumerate}

We will denote by $P_n$ the regular $n$-gon and, by abuse of notation, the set of its vertices. 
With this notation, a \emph{cyclic ordering}
of $[n]$ will be a bijection $\sigma \co [n] \to P_n$, modulo the rotations of $P_n$. 
We say that a partition $A\sqcup B=P_n$ is \emph{contiguous}, if $A$ (equivalently, $B$) 
are contiguous vertices in $P_n$.
For a cyclic ordering $\sigma$ of $[n]$, we say that a partition $I\sqcup J=[n]$
is \emph{$\sigma$-contiguous} if $\sigma(I)\sqcup \sigma(J)=P_n$ is a contiguous partition.

\subsection{Effective boundary criterion}
\label{S:effective-criterion}
The Picard group of $\M_{0,n}$ is well-understood (see \cite{keel}, \cite[\S2]{AC}, or \cite[\S1]{GKM}). 
Namely, $\Pic(\M_{0,n})$ is a torsion-free abelian group generated by 
the boundary divisors $\Delta_{I,J}$, where $I\sqcup J=[n]$ is a
proper partition, and the cotangent line bundles ($\psi$-classes) $\psi_k$, where $k=1,\dots, n$.
From \cite[Theorem 2.2(d)]{AC}, which in turn follows from Keel's relations \cite{keel},
the group of relations among the generators is generated by the following elements,
defined for every $i\neq j$:
\begin{equation}\label{E:relation}
\psi_i+\psi_j=\sum_{i\in I,\, j\in J}\Delta_{I,J}.
\end{equation} 
It follows that every line bundle on $\M_{0,n}$ can be written as 
\begin{align*}
\sum_{k=1}^n a_k \psi_k - \sum_{I,J} b_{I,J} \Delta_{I,J},
\end{align*}
but also that this representation is far from unique. 
\begin{remark}[Some bases of $\Pic(\M_{0,n})$]
\label{R:nonadjacent}
It is possible to write down several bases of $\Pic(\M_{0,n})$. For example,
 \[
 \bigl\{\psi_1,\dots, \psi_n\bigr\} \cup \bigl\{\Delta_{I,J} \mid \vert I\vert, \vert J \vert \geq 3\bigr\},
 \]
is such a basis by \cite[Lemma 2]{FG}. 

An important series of bases of $\Pic(\M_{0,n})$
is given by \emph{non-adjacent bases} discovered by Gibney and Keel. To describe 
a non-adjacent basis, 
fix a cyclic ordering $\sigma\co [n] \to P_n$. Then the set
\[
\mathcal{A}_{\sigma}:=\bigl\{\Delta_{I,J} \mid \text{$I\sqcup J=[n]$ is not a $\sigma$-contiguous partition}\bigr\}
\] 
is a basis of $\Pic(\M_{0,n})$ (see \cite{carr-2009}).
As we will see, the fact that $\mathcal{A}_{\sigma}$ is a basis of $\Pic(\M_{0,n})$ is equivalent
to the existence of a certain weighting on $[n]$. 
\end{remark}

We now state a simple observation that we will use repeatedly in the sequel.
\begin{lemma}[Effective Boundary Lemma]\label{L:main} Let $R=\ZZ$ or $R=\QQ$.
Consider
\[
D=\sum_{k=1}^n a_k \psi_k - \sum_{I,J} b_{I,J} \Delta_{I,J}.
\] Then
$D=\sum_{I,J} c_{I,J}\Delta_{I,J}$ in $\Pic(\M_{0,n})\otimes R$ 
if and only if there is an $R$-valued weighting on 
$\Gamma([n])$ such that the flow through each vertex $k$ is $a_k$ and the
flow across each proper partition $I\sqcup J=[n]$ is $b_{I,J}+c_{I,J}$.

In particular, $D$ is represented by an effective $R$-linear combination of the boundary divisors
on $\M_{0,n}$ if and only if there exists an $R$-valued weighting on $\Gamma([n])$ such
that the flow through each vertex $k$ is $a_k$ and the
flow across each proper partition $I\sqcup J=[n]$ is at least $b_{I,J}$.
\end{lemma}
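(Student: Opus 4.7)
The plan is to derive the lemma as a direct consequence of Keel's relations \eqref{E:relation} together with the fact, recorded in \cite[Theorem 2.2(d)]{AC}, that these relations generate the full module of relations between $\psi$-classes and boundary classes on $\M_{0,n}$. The central observation is that a weighting $w$ on $\Gamma([n])$ is precisely the data needed to form an $R$-linear combination of Keel's relations.

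The first step is to establish the key identity. Given any $R$-valued weighting $w$ on $\Gamma([n])$, multiplying relation \eqref{E:relation} for the pair $(i,j)$ by $w(i\sim j)$ and summing over all edges of $\Gamma([n])$ produces
\[
\sum_{k=1}^{n} w(k)\,\psi_k \;=\; \sum_{I,J} w(I\mid J)\,\Delta_{I,J}
\]
in $\Pic(\M_{0,n})\otimes R$, since the coefficient of $\psi_k$ collects the weights of all edges incident to $k$ (yielding the flow $w(k)$), and the coefficient of $\Delta_{I,J}$ collects the weights of all edges joining $I$ to $J$ (yielding $w(I\mid J)$). With this identity in hand, the forward direction of the lemma is immediate: a weighting with $w(k)=a_k$ and $w(I\mid J)=b_{I,J}+c_{I,J}$ yields $D=\sum_{I,J}c_{I,J}\Delta_{I,J}$ after subtracting $\sum_{I,J} b_{I,J}\Delta_{I,J}$ from both sides.

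For the converse, the assumption $D=\sum c_{I,J}\Delta_{I,J}$ in $\Pic(\M_{0,n})\otimes R$ is equivalent to the vanishing of $\sum_k a_k\psi_k - \sum (b_{I,J}+c_{I,J})\Delta_{I,J}$. By \cite[Theorem 2.2(d)]{AC}, every relation among the generators is an $R$-linear combination of the Keel relations \eqref{E:relation}, so this vanishing can be written as $\sum_{i<j}\lambda_{ij}\bigl(\psi_i+\psi_j-\sum_{i\in I,\, j\in J}\Delta_{I,J}\bigr)$ for some $\lambda_{ij}\in R$. Setting $w(i\sim j):=\lambda_{ij}$ then produces the required weighting: matching coefficients of $\psi_k$ forces $w(k)=a_k$, and matching coefficients of $\Delta_{I,J}$ forces $w(I\mid J)=b_{I,J}+c_{I,J}$.

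The ``In particular'' clause follows by specializing to nonnegative $c_{I,J}$: a representation of $D$ as an effective $R$-combination of boundaries exists exactly when some $c_{I,J}\in R_{\geq 0}$ work in the first equivalence, which, via $c_{I,J}:=w(I\mid J)-b_{I,J}$, translates into the inequality $w(I\mid J)\geq b_{I,J}$. The only nontrivial ingredient is the generating property of Keel's relations in the converse direction; the rest is coefficient bookkeeping, so I do not expect any serious obstacle.
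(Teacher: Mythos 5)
Your proof is correct and follows essentially the same route as the paper: both arguments amount to observing that applying Keel's relation for the pair $(i,j)$ with multiplicity $w(i\sim j)$ changes the $\psi_k$-coefficients by the vertex flows and the $\Delta_{I,J}$-coefficients by the partition flows, with the converse resting on the fact that Keel's relations generate all relations. Your write-up merely makes explicit the coefficient-matching that the paper leaves as ``The claim follows.''
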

\begin{proof}
Suppose that for each $i\neq j$ we use the relation \eqref{E:relation} $w(i\sim j)$ times to rewrite $D$ as 
$\sum_{I,J} c_{I,J} \Delta_{I,J}$.
Then in the free $R$-module generated by $\{\psi_k\}_{k=1}^n$ and $\{\Delta_{I,J}\}$ we have
\begin{multline*}
\sum_{I,J} c_{I,J} \Delta_{I,J}=D-\sum_{i\neq j} w(i\sim j)\left(\psi_i+\psi_j-\sum_{i\in I, \, j\in J}\Delta_{I,J}\right)
\\=\sum_{k=1}^n \bigl(a_k-w(k)\bigr) \psi_k -\sum_{I,J} \bigl(b_{I,J}-w(I\mid J)\bigr) \Delta_{I,J}.
\end{multline*}
The claim follows. 
\end{proof}

\section{Divisors on $\M_{0,n}$ from symmetric functions on abelian groups}
\label{S:divisors-from-functions}
\subsection{Divisors on $\M_{0,n}$ from functions on abelian groups}
Let $G$ be an abelian group. A \emph{$G$-$n$-tuple} $\vec{d}=(d_1,\dots,d_n)$
is an $n$-tuple of elements of $G$ satisfying $\sum_{i=1}^n d_i=0 \in G$.
Given $I\subset [n]$, we set $\vec{d}(I):=\sum_{i\in I} d_i$. 

A function $f\co G\to \QQ$ is \emph{symmetric} if 
$f(a)=f(-a)$ for every $a\in G$.  
Given a symmetric function $f$ on $G$
and a $G$-$n$-tuple $\vec{d}$, 
we define the following line bundle on $\M_{0,n}$:
\begin{equation}\label{E:divisor}
\DD \bigl(G, f; \vec{d}\bigr)=
\sum_{i=1}^n f(d_i) \psi_i -\sum_{\substack{I\sqcup J=[n]\\ \text{is proper}}} f\left(\vec{d}(I)\right)\Delta_{I,J}.
\end{equation}
The line bundles defined by the above formula will be called \emph{parasymmetric}
(this name will be motivated in the following discussion that culminates in Proposition \ref{P:parasymmetric}).


We begin by explaining why we are forced to consider the line bundles 
defined by \eqref{E:divisor}
even if we are ultimately interested in symmetric line bundles on $\M_{0,n}$. 
To begin, note that \eqref{E:divisor} describes all symmetric divisors on $\M_{0,n}$.
Indeed, the line bundles on $\M_{0,n}$ obtained by taking $G=\ZZ_m$, 
where $m\mid n$, and $d_1=\cdots=d_n=1$, are symmetric.
Conversely, any symmetric $\QQ$-divisor $D=\sum_{r=2}^{\nhalf} a_r \Delta_r$ on $\M_{0,n}$
can be written as $\DD \bigl(\ZZ_n, p_D; (\underbrace{1,\dots,1}_n)\bigr)$, by taking $p_D\co \ZZ_n \to \QQ$ defined by
\begin{equation}\label{E:pd-function}
\begin{aligned}
p_D(0)&=p_D(1)=0, \\ 
p_D(n-r)&=p_D(r)=-a_r, \text{ for $2\leq r\leq \nhalf$}.
\end{aligned}
\end{equation} 

Second, parasymmetric line bundles satisfy the following property:
\begin{lemma}[Functoriality with respect to the boundary stratification]\label{L:functorial} 
Fix a symmetric function $f\co G \ra \QQ$. 
Then for every $G$-$n$-tuple $\vec{d}=(d_1,\dots,d_n)$ and every 
boundary divisor $\Delta_{I,J} \subset \M_{0,n}$, we have 
\[
\DD\bigl(G, f; (d_1,\dots, d_n)\bigr)\vert_{\Delta_{I,J}}=\DD\bigl(G, f; 
(\{d_i\}_{i\in I}, \sum_{j\in J} d_j)\bigr)\boxtimes 
\DD\bigl(G, f; (\sum_{i\in I} d_i, \{d_j\}_{j\in J})\bigr),
\]
where we use the standard identification $\Delta_{I,J}\simeq \M_{0, \, I\cup p}\times \M_{0, \, q\cup J}$.

Consequently, for a fixed $G$ and $f\co G\to \QQ$, every factor of $\DD\bigl(G, f; (d_1,\dots, d_n)\bigr)$
on every closed boundary stratum of $\M_{0,n}$ is of the form $\DD\bigl(G, f; \vec{g}\bigr)$,
where $\vec{g}$ is some $G$-$k$-tuple. 
\end{lemma}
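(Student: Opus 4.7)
The plan is to compute the restriction $\DD(G,f;\vec{d})\vert_{\Delta_{I,J}}$ directly using the standard formulas for restrictions of $\psi$-classes and boundary divisors, and then match the result term-by-term with the asserted exterior product $\DD_1\boxtimes\DD_2$, where $\DD_1,\DD_2$ denote the two factors on the right-hand side. The ingredients are: for $i\in I$ (resp.\ $j\in J$) the class $\psi_i$ (resp.\ $\psi_j$) pulls back from the corresponding factor under the attaching map; the self-intersection formula $\Delta_{I,J}\vert_{\Delta_{I,J}}=-\psi_p-\psi_q$; and for any other boundary divisor $\Delta_{I',J'}$ of $\M_{0,n}$, the restriction is either $0$ (when the partitions $\{I,J\}$ and $\{I',J'\}$ cross, i.e., all four of $I\cap I',\ I\cap J',\ J\cap I',\ J\cap J'$ are nonempty) or the pullback of a boundary divisor from one of the two factors (when one part of $\{I',J'\}$ sits inside $I$ or inside $J$). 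I will accordingly partition the sum over proper partitions of $[n]$ into four groups: the self-term $\{I',J'\}=\{I,J\}$, partitions with one part properly contained in $I$, partitions with one part properly contained in $J$, and crossing partitions which contribute nothing.

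The matching then hinges on two observations. First, because $\vec{d}$ is a $G$-$n$-tuple we have $\vec{d}(I)+\vec{d}(J)=0$, and symmetry of $f$ yields $f(\vec{d}(I))=f(\vec{d}(J))$. Consequently the self-term $-f(\vec{d}(I))\Delta_{I,J}$ contributes $f(\vec{d}(J))\psi_p+f(\vec{d}(I))\psi_q$, which is precisely the $\psi_p$- and $\psi_q$-coefficients produced by labeling the new marked point $p$ on the first factor by $\vec{d}(J)$ and the new marked point $q$ on the second factor by $\vec{d}(I)$. Second, for a nested partition with $A\subsetneq I$, the restriction of $\Delta_{A,(I\setminus A)\cup J}$ equals $\operatorname{pr}_1^*\Delta_{A,(I\setminus A)\cup\{p\}}$, and its coefficient $f(\vec{d}(A))$ agrees with the coefficient appearing in $\DD_1$, since the induced $G$-tuple on $I\cup\{p\}$ (with $p\mapsto\vec{d}(J)$) sums to zero and we may evaluate $f$ on either side of the partition by the symmetry of $f$. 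The case $C\subsetneq J$ is entirely symmetric. Summing the contributions, the two sides coincide.

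The ``Consequently'' part then follows by induction on the codimension of a closed boundary stratum $\M_T$, using the presentation in \eqref{E:boundary-product} together with the intersection description $\M_T=\bigcap_{\text{$T$-partitions}}\Delta_{I,J}$. At each step, the first part of the lemma produces two factors again of the prescribed form $\DD(G,f;\vec{g})$, with the new node inheriting the unique label forced by the $G$-tuple condition $\sum g_i=0$; restricting either factor further along a boundary divisor yields another application of the lemma. I expect no conceptual obstacle; the main delicate point will be bookkeeping, namely keeping track of which partitions of $[n]$ correspond to which partitions of $I\cup\{p\}$ versus $\{q\}\cup J$, and handling the small-cardinality edge cases where a would-be boundary divisor is absent on a factor and its expected contribution must be reconciled with a $\psi$-class term on the opposite side.
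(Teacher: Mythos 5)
Your proposal is correct and follows essentially the same route as the paper: both restrict term-by-term using the standard formulas $\cO(-\Delta_{I,J})\vert_{\Delta_{I,J}}=\psi_p\boxtimes\psi_q$, the pullback descriptions of the nested boundary divisors, the vanishing of the crossing ones, and the identity $f(\vec{d}(I))=f(\vec{d}(J))$ coming from symmetry of $f$ and the $G$-$n$-tuple condition. The paper's deduction of the ``Consequently'' part is likewise the same iteration over the boundary stratification from \S\ref{S:boundary}, so no further comment is needed.
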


\begin{proof} Follows from standard intersection theory on $\M_{0,n}$.
Indeed, recall that for any $A \subsetneq I$ and $B=[n]- A$, we have
\begin{equation}\label{E1}
\cO_{\M_{0,n}}(\Delta_{A,[n]-A})\vert_{\Delta_{I,J}}=\cO_{\M_{0, I\cup p}}(\Delta_{A,I\cup p-A}) \boxtimes \cO_{\M_{0, J\cup q}}
\end{equation}
and for $B\subsetneq J$, we have 
\begin{equation}\label{E2}
\cO_{\M_{0,n}}(\Delta_{B,[n]-B})\vert_{\Delta_{I,J}}=\cO_{\M_{0, I\cup p}}
\boxtimes \cO_{\M_{0, J\cup q}}(\Delta_{B,J\cup q-B}).
\end{equation}
More importantly, 
\begin{equation}\label{E3}
\cO_{\M_{0,n}}(-\Delta_{I,J})\vert_{\Delta_{I,J}}=\psi_p
\boxtimes \psi_q 
\end{equation}
on $\Delta_{I,J}\simeq \M_{0, I\cup p}\times \M_{0, J\cup q}$.

All other boundary divisors restrict to $0$ on $\Delta_{I,J}$, and the
$\psi$-classes on $\M_{0,n}$ restrict to the corresponding $\psi$-classes on 
$\Delta_{I,J}\simeq \M_{0, I\cup p}\times \M_{0, J\cup q}$.
We conclude that by summing \eqref{E1} with the coefficient 
$-f\left(\sum_{i\in A} d_i\right)$, \eqref{E2} with the coefficient 
$-f\left(\sum_{j\in B} d_j\right)$, \eqref{E3} with the coefficient
$f\left(\sum_{i\in I} d_i\right)=f\left(\sum_{j\in J} d_j\right)$, 
and each $\psi_i$ with the coefficient $f(d_i)$, 
we obtain 
precisely
\[
\DD\bigl(G, f; (d_1,\dots, d_n))\vert_{\Delta_{I,J}}=\DD\bigl(G, f; 
(\{d_i\}_{i\in I}, \sum_{j\in J} d_j)\bigr)\boxtimes 
\DD\bigl(G, f; (\{d_j\}_{j\in J}, \sum_{i\in I} d_i)\bigr).
\]
The second part follows from the first using the discussion of \S\ref{S:boundary}.
\end{proof}

Third, if $G$ is a finite cyclic group, then every parasymmetric divisor associated to $G$
is a pullback of some symmetric divisor on a higher-dimensional moduli space 
under some attaching morphism:
\begin{lemma}\label{L:pullback-of-symmetric} Suppose $G=\ZZ_m$ and $\{d_i\}_{i=1}^n$ 
are positive integers
such that $m\mid N:=\sum_{i=1}^n d_i$. 
Consider an attaching morphism $j\co \M_{0,n} \to \M_{0, N}$ defined by attaching to the $i^{th}$
marked point a fixed element of $\M_{0, d_i+1}$. 
Then 
\[
\DD \bigl(G, f; (d_1,\dots, d_n)\bigr)=j^*\Big(\DD\bigl(G, f; (\underbrace{1,\dots, 1}_{N})\bigr)\Big).
\]
\end{lemma}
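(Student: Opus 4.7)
The plan is to identify $j$ with the inclusion of a fiber of a suitable boundary stratum $\M_T\subset\M_{0,N}$, and then to read off the pullback using the factorization part of Lemma~\ref{L:functorial}.

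\emph{Setting up the stratum.} For each $i$, let $T_i$ be the dual graph of the fixed curve $[C_i]\in \M_{0,d_i+1}$, with one of its leaves singled out as the gluing leaf $*_i$ and the remaining $d_i$ leaves labeled by a fixed subset $A_i\subset [N]$, where $[N]=A_1\sqcup\cdots\sqcup A_n$. Let $T$ be the $[N]$-labeled tree built from a central vertex $x_0$ of valence $n$ by grafting $T_i$ to $x_0$ along its $i$-th edge, identifying $*_i$ with the endpoint of that edge. Under the product decomposition \eqref{E:boundary-product}, $\M_T\simeq \M_{0,n}\times\prod_{i=1}^n \M_{T_i}$, where $\M_{T_i}\subset \M_{0,d_i+1}$ denotes the closure of the stratum containing $[C_i]$; the image of $j$ is precisely the fiber $\M_{0,n}\times\{([C_1],\dots,[C_n])\}$.

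\emph{Pulling back.} By the second assertion of Lemma~\ref{L:functorial}, the restriction of $\DD\bigl(\ZZ_m,f;(\underbrace{1,\dots,1}_{N})\bigr)$ to $\M_T$ splits as an exterior tensor product over $V(T)$ of line bundles of the form $\DD(\ZZ_m,f;\vec{g}_x)$ on the corresponding $\M_{0,v(x)}$. Since $j$ is constant in every factor other than the $\M_{0,n}$ at $x_0$, the pullback equals the factor $\DD(\ZZ_m,f;\vec{g}_{x_0})$ at $x_0$. To compute $\vec{g}_{x_0}$, I iterate the restriction formula from the first part of Lemma~\ref{L:functorial} along a chain of codimension-one boundaries leading from $\M_{0,N}$ down to $\M_T$. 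Each time a subtree attached to $x_0$ by its $i$-th edge is peeled off, the new virtual label created at $x_0$ is the $\ZZ_m$-sum of the original labels of that subtree. Since all original labels equal $1$ and $T_i$ contributes exactly $d_i$ non-gluing leaves, we obtain $\vec{g}_{x_0}=(d_1,\dots,d_n)$, hence $j^*\DD\bigl(\ZZ_m,f;(\underbrace{1,\dots,1}_{N})\bigr)=\DD(\ZZ_m,f;(d_1,\dots,d_n))$.

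The only delicate step is the label bookkeeping in the iteration; however, this is essentially automatic once one observes that restricting across any $\Delta_{I,J}$ replaces the labels on one side by the sum of those on the other side, exactly mirroring the combinatorial collapse of a subtree to a single virtual marked point at $x_0$. The hypothesis $m\mid N$ guarantees $\sum d_i\equiv 0\pmod m$, so $(d_1,\dots,d_n)$ is a legitimate $\ZZ_m$-$n$-tuple and the right-hand side is a well-defined parasymmetric divisor.
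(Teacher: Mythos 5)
Your proof is correct and follows essentially the same route as the paper, whose own proof is just the one-line assertion that the statement ``follows immediately from Lemma~\ref{L:functorial} and the discussion of \S\ref{S:boundary}''; you have simply spelled out the details (realizing $j$ as a fiber inclusion into a boundary stratum, applying the factorization of Lemma~\ref{L:functorial}, and iterating the label bookkeeping). No gaps.
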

\begin{proof}
This follows immediately from Lemma \ref{L:functorial} and the discussion of \S\ref{S:boundary}.
\end{proof}

Motivated by the above two lemmas, we consider the set of all parasymmetric line bundles
in $\Pic(\M_{0,n})$
associated to an abelian group $G$, a symmetric function $f\co G\to \ZZ$, and 
an integer $n\geq 3$: 
\begin{equation}\label{E:sgfn}
\PS(G,f,n):=\left\{
\DD \bigl(G, f; \vec{d}\bigr)\mid \text{ 
$\vec{d}$ is a $G$-$n$-tuple
} \right\}.\end{equation}
We also set
\begin{equation}\label{E:sgn}
 \PS(G,n):=\bigcup_{\substack{f\co G\to \ZZ \\ \text{is symmetric}}} \PS(G,f,n).
 \end{equation}
Since every symmetric line bundle on $\M_{0,n}$ lies in $\PS(\ZZ_n, f, n)$
for some symmetric function $f\co \ZZ_n \to \ZZ$ and since
every line bundle in $\PS(\ZZ_m, f, n)$ is a pullback of a symmetric line bundle on some $\M_{0,N}$
via an attaching morphism by Lemma \ref{L:pullback-of-symmetric}, we obtain the following result:
\begin{prop}\label{P:parasymmetric} 
$\bigcup_{m\geq 2, n\geq 3} \PS(\ZZ_m, n)$ 
is the smallest collection of line bundles on $\{\M_{0,n}\}_{n\geq 3}$
that include all symmetric line bundles and is closed under pullbacks via attaching morphisms.
\end{prop}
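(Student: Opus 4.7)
The plan is to verify three statements: (i) every symmetric line bundle on $\M_{0,n}$ lies in $\PS(\ZZ_n,n)$; (ii) the family $\bigcup_{m\geq 2,\,n\geq 3}\PS(\ZZ_m,n)$ is closed under pullback via attaching morphisms; and (iii) any collection of line bundles on $\{\M_{0,n}\}_{n\geq 3}$ satisfying (i) and (ii) already contains this family. Together these are exactly the proposition.

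Step (i) is the explicit formula \eqref{E:pd-function}: any symmetric $\QQ$-divisor $D=\sum_{r=2}^{\nhalf} a_r\Delta_r$ is realized as $\DD(\ZZ_n,p_D;(1,\dots,1))$, which sits inside $\PS(\ZZ_n,n)$ by definition. Step (ii) is essentially the content of Lemma \ref{L:functorial}: an attaching morphism $j_T\co \prod_{x\in V(T)}\M_{0,v(x)}\to\M_{0,n}$ has image the closed boundary stratum $\M_T$, and the lemma shows that each factor of $\DD(\ZZ_m,f;\vec d)$ on $\M_{0,v(x)}$ is again of the form $\DD(\ZZ_m,f;\vec g)$ for some $\ZZ_m$-$v(x)$-tuple $\vec g$. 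Consequently every factor of $j_T^*L$ lies in some $\PS(\ZZ_m,v(x))$, and closure under attaching pullbacks follows.

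For step (iii), let $\mathcal C$ be any collection satisfying (i) and (ii), and let $L=\DD(\ZZ_m,f;(d_1,\dots,d_n))\in\PS(\ZZ_m,n)$. Lift each $d_i\in\ZZ_m$ to a positive integer $\tilde d_i\geq 2$ with $\tilde d_i\equiv d_i\pmod m$; then $N:=\sum_{i=1}^n\tilde d_i$ is divisible by $m$, and Lemma \ref{L:pullback-of-symmetric} gives $L=j^*\bigl(\DD(\ZZ_m,f;(1,\dots,1))\bigr)$, where $j\co\M_{0,n}\to\M_{0,N}$ attaches a fixed element of $\M_{0,\tilde d_i+1}$ at the $i$-th marked point. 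The target bundle on $\M_{0,N}$ is symmetric, hence in $\mathcal C$ by (i). The morphism $j$ is a composition of partial attaching morphisms, each obtained from a genuine attaching morphism by specializing all but one of its product factors to a fixed point; the closure hypothesis (ii) applied in turn then forces $L\in\mathcal C$.

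The only mildly subtle point is this last step: reinterpreting the ``partial attaching'' morphism supplied by Lemma \ref{L:pullback-of-symmetric} as a sequence of pullbacks under genuine attaching morphisms (the form to which the closure assumption on $\mathcal C$ literally applies), together with the minor bookkeeping of ensuring the lifts $\tilde d_i\geq 2$ so that the auxiliary spaces $\M_{0,\tilde d_i+1}$ are non-empty. Beyond this, the proposition is an immediate packaging of Lemmas \ref{L:functorial} and \ref{L:pullback-of-symmetric} with the formula \eqref{E:pd-function}.
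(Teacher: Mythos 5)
Your proof is correct and follows essentially the same route as the paper, which deduces the proposition from formula \eqref{E:pd-function} (symmetric bundles arise as $\DD(\ZZ_n,p_D;(1)_n)$), Lemma \ref{L:functorial} (closure of the family under boundary restriction, hence under attaching pullbacks), and Lemma \ref{L:pullback-of-symmetric} (every parasymmetric bundle is a pullback of a symmetric one, giving minimality). The extra care you take in lifting the $d_i$ and in factoring the morphism of Lemma \ref{L:pullback-of-symmetric} into genuine attaching morphisms is a reasonable tightening of details the paper leaves implicit, not a different argument.
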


From now on, we will be interested exclusively in nefness and semiampleness of parasymmetric divisors. 
In order to recast the F-conjecture for divisors in $\PS(G,n)$ in terms of symmetric functions
on the abelian group $G$, we make the following definition:
\begin{definition}\label{D:F-nef} 
Let $G$ be an abelian group. 
We say that a symmetric function $f\co G \ra \QQ$ is \emph{F-nef} if for any $a,b,c\in G$ we have
\begin{equation}\label{E:F-nef}
f(a)+f(b)+f(c)+f(a+b+c)\geq f(a+b)+f(a+c)+f(b+c).
\end{equation}
\end{definition}
\subsubsection*{First properties}
For an arbitrary $f\co G \ra \QQ$, we define $d_f \co G\times G \ra \QQ$ by 
$
d_f(a,b)=f(a)+f(b)-f(a+b).
$
Then \eqref{E:F-nef} is equivalent to subadditivity of $d_f$ in each variable:
\begin{equation}\label{E:diff-nef}
d_f(a,b)+d_f(a,c) \geq d_f(a,b+c).
\end{equation}
\begin{lemma}\label{L:subadditivity}
Suppose $f\colon G \ra \QQ$ is an F-nef function on a finite abelian group. Then $d_f$ is non-negative, and hence $f$ is subadditive. Namely,
for any $a,b\in G$, we have 
\[
f(a)+f(b)\geq f(a+b).
\]
\end{lemma}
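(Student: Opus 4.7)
The plan is to exploit the subadditivity reformulation \eqref{E:diff-nef} of F-nefness, which says that for any fixed $a$, the function $b \mapsto d_f(a,b)$ is subadditive: $d_f(a,b+c) \leq d_f(a,b) + d_f(a,c)$. The finiteness of $G$ will then be used to compare $d_f(a,b)$ with the value $d_f(a,0) = f(0)$ via a telescoping argument around the cyclic subgroup generated by $b$.

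First I would record two preliminary observations. (i) Plugging $a=b=c=0$ into the F-nef inequality \eqref{E:F-nef} yields $4f(0) \geq 3f(0)$, hence $f(0)\geq 0$. (ii) Directly from the definition, $d_f(a,0)=f(a)+f(0)-f(a)=f(0)$.

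Next I would iterate \eqref{E:diff-nef}. For any $a,b\in G$ and any positive integer $k$, induction on $k$ together with subadditivity in the second slot gives
\begin{equation*}
d_f(a,kb)\;=\;d_f\bigl(a,(k-1)b+b\bigr)\;\leq\;d_f\bigl(a,(k-1)b\bigr)+d_f(a,b)\;\leq\;k\,d_f(a,b).
\end{equation*}
Now I would invoke the finiteness of $G$: let $m$ be the order of $b$ in $G$, so that $mb=0$. Applying the displayed inequality with $k=m$ and using observation (ii), I get
\begin{equation*}
f(0)\;=\;d_f(a,mb)\;\leq\;m\,d_f(a,b).
\end{equation*}
Combining with observation (i), $d_f(a,b)\geq f(0)/m\geq 0$, which is precisely the claim.

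I do not expect a serious obstacle here; the step that must be handled carefully is the iteration, which is why one needs finiteness of $G$ (or at least that $b$ is a torsion element) to guarantee that $kb$ eventually returns to $0$ and thereby produces the lower bound. Without torsion, subadditivity alone only provides linear growth, not sign information.
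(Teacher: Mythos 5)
Your proof is correct and takes essentially the same approach as the paper's: iterate the subadditivity \eqref{E:diff-nef} of $d_f$ in its second slot around the cyclic subgroup generated by $b$. The paper's version is marginally shorter --- it telescopes to $(N+1)b=b$ where $Nb=0$, obtaining $(N+1)\,d_f(a,b)\geq d_f(a,(N+1)b)=d_f(a,b)$ directly, which sidesteps your auxiliary observations that $f(0)\geq 0$ and $d_f(a,0)=f(0)$.
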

\begin{proof}
Take a positive integer $N$ so that $Nb=0$. Applying \eqref{E:diff-nef}, we obtain
\[
(N+1)d_f(a,b)\geq d_f(a,(N+1)b)=d_f(a,b). 
\]
\end{proof}

\subsubsection*{Some examples of F-nef functions}
In the following examples, we take $G=\ZZ_m$.
For $k\in \ZZ$, we denote by $\modp{k}{m}$ the residue of $k$ in $\{0,1,\dots, m-1\}$. 
\begin{example} \label{E:standard}
We define the \emph{standard function} $A_m\co \ZZ_m \ra \ZZ$ by
\begin{equation*}
A_{m}(i)=\modp{i}{m}\modp{m-i}{m}.
\end{equation*}
Given $a,b,c \in \ZZ_m$, set $d=\langle m-a-b-c\rangle_m$. It is easy to check that 
 \begin{align}\label{E:standard-function}
A_m(a)+A_m(b)&+A_m(c)+A_m(a+b+c)- A_m(a+b)-A_m(a+c)-A_m(b+c) \notag \\ &=\begin{cases} 0 \qquad \text{if $a+b+c+d=m$ \ \ or \ \ $a+b+c+d=3m$,}  \\
 2m \cdot \min\{ a, b, c, d, m-a, m-b, m-c, m-d \}, \ \text{otherwise.}
 \end{cases}
\end{align}
We conclude that $A_m$ is F-nef. 
\end{example}

\begin{example}\label{example-B}
The \emph{$2^{nd}$ standard function} $B_m\co \ZZ_m \ra \ZZ$ is defined by 
\[
\text{$B_m(i)=A_m(i)$ if $i\neq 1, m-1$, \ \ and \ $B_m(1)=B_m(m-1)=3m-1$.}\]
It is easy to verify using \eqref{E:standard-function} that $B_m$ is F-nef for all $m\geq 8$ and for $m=4,6$,
but is not F-nef for $m=5,7$.
\end{example}

\begin{example}\label{example-E}
The \emph{$3^{rd}$ standard function} $E_m\co \ZZ_m \ra \ZZ$ is defined by 
\[
\text{$E_m(i)=A_m(i)$ if $i\neq 0$, \ \ and \ $E_m(0)=m$.}\]
It is easy to verify using \eqref{E:standard-function} that $E_m$ is F-nef for all $m\geq 3$, but is not F-nef for $m=2$.
\end{example}

\subsection{F-nef divisors from F-nef functions}
Define the following subset of $\PS(G,n)$ from \eqref{E:sgn}:
\begin{equation}
\PSFN(G,n):=\left\{
\DD \bigl(G, f; \vec{d}\bigr)\mid \text{
$f\co G \to \ZZ$ is an F-nef function, $\vec{d}$ is a $G$-$n$-tuple} \right\}.
\end{equation}

We begin by observing that every divisor in $\PSFN(G,n)$ is F-nef on $\M_{0,n}$:
\begin{lemma}\label{L:F-nef} 
Suppose $f\co G \ra \QQ$ is F-nef.
Then the divisor $\DD(G, f; (d_1,\dots, d_n))$ is F-nef on $\M_{0,n}$.
\end{lemma}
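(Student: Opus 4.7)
The plan is to verify the defining property of F-nefness directly: for every F-curve $F$ in $\M_{0,n}$, check that $\DD(G,f;\vec{d})\cdot F\geq 0$. By \S\ref{S:01}, the F-curves are classified up to numerical equivalence by partitions $[n]=I\sqcup J\sqcup K\sqcup L$ into four non-empty subsets, so it suffices, for each such partition, to exhibit a convenient representative one-dimensional boundary stratum and to compute the degree of the restriction there.

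First I would take $T$ to be any $[n]$-labeled tree whose internal nodes all have valence $3$ except for a single central node $x_0$ of valence $4$, with four subtrees carrying the leaves $I$, $J$, $K$, $L$ attached at $x_0$. By the product description \eqref{E:boundary-product}, $\M_T\simeq \M_{0,4}\times(\mathrm{point})$, so only the factor $L_{x_0}$ at $x_0$ contributes to the degree of $\DD(G,f;\vec{d})\vert_{\M_T}$. By the ``Importantly'' paragraph following \eqref{E:boundary-product}, $L_{x_0}$ is the pullback of $\DD(G,f;\vec{d})$ under the attaching morphism $j_{x_0}\co \M_{0,4}\to\M_{0,n}$, and iterating Lemma \ref{L:functorial} along the edges of $T$ identifies this pullback with
\[
\DD\bigl(G,f;(\vec{d}(I),\vec{d}(J),\vec{d}(K),\vec{d}(L))\bigr)
\]
on $\M_{0,4}$; since the four entries sum to $0\in G$, this is a legitimate $G$-$4$-tuple.

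Next I would finish by an explicit degree computation on $\M_{0,4}\simeq\PP^1$, where each $\psi$-class has degree $1$ and each of the three boundary points has degree $1$. Writing $a=\vec{d}(I)$, $b=\vec{d}(J)$, $c=\vec{d}(K)$, $d=\vec{d}(L)$, the intersection $\DD(G,f;\vec{d})\cdot F$ becomes
\[
f(a)+f(b)+f(c)+f(d)-f(a+b)-f(a+c)-f(a+d).
\]
Using the symmetry $f(x)=f(-x)$ together with $d=-(a+b+c)$ and $a+d=-(b+c)$, this is precisely the F-nef inequality \eqref{E:F-nef} applied to the triple $(a,b,c)$, and is therefore non-negative.

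The only nontrivial bookkeeping step is confirming that iterating Lemma \ref{L:functorial} really produces the claimed factor at the central node $x_0$: at each stage the restriction splits off one subtree and replaces its leaves by the corresponding partial sum, and after processing all edges not incident to $x_0$ one is left exactly with the four-entry tuple above. I do not anticipate any serious obstacle beyond this; the entire argument is a direct reduction to the F-nefness hypothesis on $f$.
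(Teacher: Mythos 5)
Your proposal is correct and follows essentially the same route as the paper: both reduce to the F-curve $F_{I,J,K,L}$ via Lemma \ref{L:functorial}, identify the relevant factor with $\DD\bigl(G,f;(\vec{d}(I),\vec{d}(J),\vec{d}(K),\vec{d}(L))\bigr)$ on $\M_{0,4}$, and observe that the resulting degree is exactly the F-nef inequality \eqref{E:F-nef} for the triple $(\vec{d}(I),\vec{d}(J),\vec{d}(K))$, using $f(\vec{d}(L))=f(\vec{d}(I)+\vec{d}(J)+\vec{d}(K))$ and $f(\vec{d}(I)+\vec{d}(L))=f(\vec{d}(J)+\vec{d}(K))$. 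The paper simply compresses the explicit degree computation on $\M_{0,4}\simeq\PP^1$ that you spell out.
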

\begin{proof} 
Consider a partition $I \sqcup J \sqcup K \sqcup L=[n]$ and the corresponding F-curve $F_{I,J,K,L}$.
Set $A=\sum_{i\in I} d_i$, $B=\sum_{j\in J} d_j$, $C=\sum_{k\in K} d_k$, $D=\sum_{\ell\in L} d_\ell$.
Note that $A+B+C+D=0 \in G$ and so $f(D)=f(A+B+C)$.
Then using Lemma \ref{L:functorial}, we obtain
\begin{multline*}
\DD(G, f; (d_1,\dots, d_n))\cdot F_{I,J,K,L}
\\ =f(A)+f(B)+f(C)+f(D)- f(A+B)-f(A+C)-f(B+C) \geq 0,
\end{multline*}
by F-nefness of $f$.
\end{proof}
As already observed, every symmetric divisor $D$ on $\M_{0,n}$ comes from a 
symmetric function on $\ZZ_n$.
More importantly, 
every symmetric F-nef divisor on $\M_{0,n}$ comes from some F-nef function on $\ZZ_n$:
\begin{lemma}\label{L:every-F-nef}
Suppose $D=\sum_{r=2}^{\lfloor n/2\rfloor} a_r\Delta_r$ is a symmetric F-nef divisor on $\M_{0,n}$.
Then there exists an F-nef function $f_D\co \ZZ_n \to \QQ$ such that 
\begin{equation}\label{E:divisor-function}
D=\DD\bigl(\ZZ_n, f_D; (1)_n\bigr).
\end{equation}
\end{lemma}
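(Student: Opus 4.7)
My proposal is to construct $f_D$ as a one-parameter perturbation of the natural function $p_D$ of \eqref{E:pd-function}. Specifically, set $f_D := \frac{t}{n-1}A_n - a$ for a rational $t > 0$ to be chosen sufficiently large, where $A_n$ is the standard F-nef function of Example \ref{E:standard} and $a \co \ZZ_n \to \QQ$ is the symmetric function with $a(0) = a(\pm 1) = 0$ and $a(r) = a(n-r) = a_r$ for $2 \leq r \leq \lfloor n/2 \rfloor$. The uncorrected choice $t = 0$ simply recovers $p_D$, which in general is \emph{not} F-nef: for instance, for $D = \Delta_2 + \Delta_3$ on $\M_{0,6}$, the triple $(3,3,3) \in \ZZ_6^3$ produces a negative F-nef excess for $p_D$, so enlarging $t$ is essential.

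First I would verify that $D = \DD(\ZZ_n, f_D; (1)_n)$ independently of $t$. By linearity of $\DD(G, \cdot ; \vec{d})$ in the function argument, this reduces to the two claims $\DD(\ZZ_n, -a; (1)_n) = D$ and $\DD(\ZZ_n, A_n; (1)_n) = 0 \in \Pic(\M_{0,n})\otimes\QQ$. The first is immediate since $a(1) = 0$ kills the $\psi$-contribution while the boundary coefficients compute to $a_r$. The second is an instance of the Effective Boundary Lemma \ref{L:main} applied to the uniform weighting $w \equiv 1$ on $\Gamma([n])$: the flow through each vertex is $n-1 = A_n(1)$, and the flow across any partition with $|I| = r$ is $r(n-r) = A_n(r)$.

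The heart of the argument is the F-nefness of $f_D$ for sufficiently large $t$. Given $(x,y,z) \in \ZZ_n^3$, let $\alpha, \beta, \gamma, \delta \in \{0, \ldots, n-1\}$ denote the residues of $x, y, z, -(x+y+z)$, and write $E_A$ and $E_a$ for the F-nef excesses of $A_n$ and $a$ at $(x,y,z)$; I must establish $\frac{t}{n-1} E_A \geq E_a$. Using the explicit formula \eqref{E:standard-function}, I split into cases. If some $\alpha_i = 0$, a direct telescoping yields $E_a = a(0) = 0$ and $E_A = 0$. If $\alpha+\beta+\gamma+\delta = n$ with all $\alpha_i \geq 1$, then $(\alpha, \beta, \gamma, \delta)$ are the part sizes of an F-curve $F$ on $\M_{0,n}$, and the computation of Lemma \ref{L:F-nef} combined with F-nefness of $D$ gives $E_a = -D \cdot F \leq 0 = E_A$. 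The case $\alpha+\beta+\gamma+\delta = 3n$ with all $\alpha_i \geq 1$ reduces to the preceding one via the involution $(x,y,z) \mapsto (-x,-y,-z)$, which sends the residue tuple to $(n-\alpha, n-\beta, n-\gamma, n-\delta)$ summing to $n$ while preserving both $E_A$ and $E_a$ thanks to the symmetry of $a$ and $A_n$.

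The main (and essentially only genuine) obstacle is the remaining case $\alpha+\beta+\gamma+\delta = 2n$ with all $\alpha_i \geq 1$. Here \eqref{E:standard-function} gives $E_A = 2n \min\{\alpha_i, n - \alpha_i\} \geq 2n > 0$, while $E_a$ ranges over a finite set of rationals determined by the $a_r$ as $(x,y,z)$ varies. Choosing
\[
t \;\geq\; \max\left\{\frac{(n-1)\,E_a(x,y,z)}{E_A(x,y,z)} \ :\ \alpha+\beta+\gamma+\delta = 2n,\ \text{all }\alpha_i \geq 1\right\}
\]
secures the F-nef inequality in this final case and thus completes the construction of the desired F-nef function $f_D$.
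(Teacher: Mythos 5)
Your proposal is correct and follows essentially the same route as the paper: the paper also defines $f_D=\lambda A_n+p_D$ (with $p_D=-a$) and takes $\lambda$ to be the maximum of the ratio of excesses over quadruples summing to $2n$, which is exactly your $t/(n-1)$. Your case analysis justifying why only the $\alpha+\beta+\gamma+\delta=2n$ case forces $t>0$, and your verification of $\DD(\ZZ_n,A_n;(1)_n)=0$ via the uniform weighting, merely spell out details the paper leaves implicit in its appeal to \eqref{E:standard-function}.
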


\begin{proof}
Let $p_D\co \ZZ_n \ra \ZZ$ be as defined in \eqref{E:pd-function}.
Consider 
\[
\lambda_{\fnef}(D):=\max \left\{ \frac{p_D(a+b)+p_D(a+c)+p_D(b+c)-p_D(a)-p_D(b)-p_D(c)-p_D(d)}{2n\cdot \min\{ a, b, c, d, n-a, n-b, n-c, n-d \}}\right\},
\]
where the maximum is taken over $a,b,c,d\in \{1,\dots, n-1\}$ with $a+b+c+d=2n$.
Let $A_{n}$ be the standard function on $\ZZ_n$ as defined in Example \ref{E:standard}.
Then using \eqref{E:standard-function}, we see that $\lambda A_{n}+p_D$ is F-nef if and only if $\lambda\geq \lambda_{\fnef}(D)$. 
We define 
\begin{equation}\label{E:function-from-divisor}
f_{D}:=\lambda_{\fnef}(D) A_{n}+p_D.
\end{equation}

Observing that $\DD\bigl(\ZZ_n, A_n; (1)_n\bigr)=0$ (again from \eqref{E:standard-function}), 
we see that  
\begin{equation*}
\DD\bigl(\ZZ_n, f_D; (1)_n\bigr)
=\DD\bigl(\ZZ_n, p_D; (1)_n\bigr)=D.
\end{equation*}
We are done. 
\end{proof}

\begin{corollary}
$\{\PSFN(\ZZ_m,n)\}_{m\geq 2, n\geq 3}$ is the smallest collection of line bundles on 
$\{\M_{0,n}\}_{n\geq 3}$
that include all symmetric F-nef line bundles and is closed under pullbacks via attaching morphisms.
\end{corollary}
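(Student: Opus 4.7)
The plan is to verify three properties of the collection $\mathcal{P} := \bigcup_{m\geq 2,\, n\geq 3} \PSFN(\ZZ_m, n)$: that it contains every symmetric F-nef line bundle, that it is closed under pullbacks via attaching morphisms, and that it is contained in any other collection enjoying these two properties. All three parts reduce directly to previously established lemmas, so the argument is essentially an assembly of the preceding structural results.

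For the first property, given a symmetric F-nef divisor $D$ on $\M_{0,n}$, Lemma \ref{L:every-F-nef} produces an F-nef function $f_D\co \ZZ_n \to \QQ$ with $D = \DD(\ZZ_n, f_D; (1)_n)$, placing $D$ in $\PSFN(\ZZ_n, n) \subseteq \mathcal{P}$ (after clearing denominators, if one insists on $\ZZ$-valued functions). For the closure property, suppose $L = \DD(\ZZ_m, f; \vec{d}) \in \PSFN(\ZZ_m, n)$ and let $j\co \M_{0,k} \to \M_{0,n}$ be an attaching morphism. As recalled at the end of \S\ref{S:boundary}, $j^*L$ is realized as a factor of the restriction of $L$ to the boundary stratum corresponding to $j$, and the second half of Lemma \ref{L:functorial} asserts that every such factor has the form $\DD(\ZZ_m, f; \vec{g})$ with the \emph{same} F-nef function $f$ and some $\ZZ_m$-$k$-tuple $\vec{g}$. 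Hence $j^*L \in \PSFN(\ZZ_m, k) \subseteq \mathcal{P}$.

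For minimality, let $\mathcal{C}$ be any collection of line bundles containing all symmetric F-nef line bundles and closed under pullbacks via attaching morphisms. Given $L = \DD(\ZZ_m, f; (d_1,\dots,d_n)) \in \PSFN(\ZZ_m, n)$, I would choose positive integer lifts $\widetilde{d_i} \geq 2$ of the classes $d_i \in \ZZ_m$ (always possible by adding suitable multiples of $m$), set $N := \sum_i \widetilde{d_i}$ (automatically divisible by $m$), and invoke Lemma \ref{L:pullback-of-symmetric} to write $L = j^*\bigl(\DD(\ZZ_m, f; (1)_N)\bigr)$ for a suitable attaching morphism $j\co \M_{0,n} \to \M_{0,N}$. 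The divisor $\DD(\ZZ_m, f; (1)_N)$ is manifestly $\Sm_N$-symmetric (its defining coefficients depend only on partition sizes modulo $m$) and is F-nef on $\M_{0,N}$ by Lemma \ref{L:F-nef}, so it belongs to $\mathcal{C}$, and therefore so does its pullback $L$. No genuinely new idea is needed; the only minor technicality is ensuring $\widetilde{d_i} \geq 2$ so that the attaching morphism of Lemma \ref{L:pullback-of-symmetric} is well-defined, which is trivially arranged.
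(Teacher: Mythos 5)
Your proof is correct and follows exactly the route the paper intends: the corollary is stated there without explicit proof as the immediate analogue of Proposition \ref{P:parasymmetric}, obtained by combining Lemma \ref{L:every-F-nef} (every symmetric F-nef divisor arises from an F-nef function on $\ZZ_n$), Lemmas \ref{L:functorial} and \ref{L:pullback-of-symmetric} (closure under, and realization via, pullbacks along attaching morphisms), and Lemma \ref{L:F-nef} (the symmetric divisor $\DD(\ZZ_m,f;(1)_N)$ is F-nef), which is precisely your assembly. The one parenthetical to adjust: ``clearing denominators'' would replace $D$ by a positive multiple rather than $D$ itself, so to land in $\PSFN(\ZZ_n,n)$ with a $\ZZ$-valued function one should instead take $\lambda=\lceil\lambda_{\fnef}(D)\rceil$ in \eqref{E:function-from-divisor}, which still represents $D$ because $\DD\bigl(\ZZ_n,A_n;(1)_n\bigr)=0$.
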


\begin{definition} The function $f_{D}$ constructed in the proof of Lemma
\ref{L:every-F-nef} (see \eqref{E:function-from-divisor}) will be called
the \emph{associated F-nef function} of the divisor $D$. 
\end{definition}

\section{Nefness and base point freeness}
\label{S:nefness-bpf}
\subsection{Effective boundary divisors of three kinds}
A divisor on $\M_{0,n}$ of the form $\sum b_{I,J}\Delta_{I,J}$, where $b_{I,J}\geq 0$, is called an
\emph{effective boundary}. We say that a line bundle $L \in \Pic(\M_{0,n})$ is an effective boundary 
if the linear system $\vert L\vert$ contains an effective boundary divisor. 

\begin{definition} We say that $L \in \Pic(\M_{0,n})$ is a \emph{stratally effective boundary}
if for every closed boundary stratum $\M_T \subset \M_{0,n}$, every factor of the restriction 
of $L$ to $\M_T$ (as defined in \S\ref{S:boundary}) is an effective boundary. 
\end{definition}

Recall that, by a standard argument called \emph{Effective Dichotomy} \cite[p.39]{ian-mori}, 
a stratally effective boundary divisor is nef. 
Indeed, suppose $L$ is a stratally effective boundary on $\M_{0,n}$.
Then $L$ intersects non-negatively all irreducible curves not lying entirely in 
$\Delta=\M_{0,n}\setminus M_{0,n}$, simply by virtue of being linearly equivalent to an effective divisor 
with the 
support in $\Delta$. Furthermore, the restriction of $L$ to every irreducible boundary
divisor in $\Delta$ is nef by the same argument using the assumption that $L$ is stratally effective boundary.
Hence $L$ is nef on $\M_{0,n}$. 

Historically, proving that every F-nef line bundle
on $\M_{0,n}$ is stratally effective boundary was one of the original approaches to the F-conjecture,
see for example \cite[Question 3.31, p.100]{ian-mori} or \cite[Question 0.13]{GKM}.
Pixton's example \cite{pixton} of a semiample line bundle on $\M_{0,12}$
that is not effective boundary shows that this approach
was too optimistic. One motivation behind this work is a (perhaps just as overly optimistic)
hope that every F-nef parasymmetric line bundle might still be stratally effective boundary. 

\begin{definition} We say that a line bundle $L \in \Pic(\M_{0,n})$ is \emph{boundary semiample}
if for every $[C]\in \M_{0,n}$, there exists an effective boundary $\QQ$-divisor $D_C$
linearly equivalent to $L$ such that $[C]\notin \Supp(D_C)$.
\end{definition}
Clearly, a boundary semiample line bundle is both semiample and stratally effective boundary. 
With this in mind, we make the following simple observation:
\begin{lemma}\label{L:semiample}
Suppose $L$ is a line bundle on $\M_{0,n}$ such that for every 
$0$-dimensional boundary stratum $B\in B(n)$, there exists an effective boundary $D_B \in \vert L\vert$
such that $B \notin \Supp(D_B)$. Then $\langle D_B \mid B\in B(n)\rangle$ is a base point free linear
subsystem of $\vert L\vert$
and $L$ is a boundary semiample line bundle.
\end{lemma}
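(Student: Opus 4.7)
The plan is to reduce both conclusions to a single pointwise statement: for every $[C]\in \M_{0,n}$, there is some $B\in B(n)$ with $[C]\notin \Supp(D_B)$. Granting this, the base locus of the linear subsystem $\langle D_B\mid B\in B(n)\rangle$ equals $\bigcap_B \Supp(D_B)=\emptyset$, so the subsystem is base point free; moreover the very same $D_B$ serves as a choice of $D_{[C]}$ in the definition of boundary semiampleness, delivering both halves of the lemma in one stroke.

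To exhibit such a $B$, I would invoke the correspondence recalled in \S\ref{S:boundary}: a point $[C]$ lies in $\Delta_{I,J}$ if and only if $I\sqcup J$ is a $G(C)$-partition, and a $0$-dimensional stratum $B$ lies in $\Delta_{I,J}$ if and only if $I\sqcup J$ is a $T_B$-partition, where $T_B$ is the $[n]$-labeled binary tree associated to $B$. By hypothesis, $D_B$ is supported only on those boundary divisors $\Delta_{I,J}$ whose underlying partition is \emph{not} a $T_B$-partition.

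Given $[C]$, set $T:=G(C)$ and pick any binary $[n]$-labeled tree $T_B$ from which $T$ is recovered by contracting internal edges — concretely, at each internal node $x$ of $T$ with $v(x)\geq 4$, one inserts an arbitrary trivalent $[v(x)]$-labeled subtree. The crucial combinatorial observation is that the non-contracted internal edges of $T_B$ are precisely the internal edges of $T$, and each induces the same partition of $[n]$ in both trees. Consequently, every $T$-partition is a $T_B$-partition, so every boundary divisor $\Delta_{I,J}$ containing $[C]$ also contains $B$, and in particular is absent from $\Supp(D_B)$. This forces $[C]\notin \Supp(D_B)$, as required.

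I do not anticipate any substantive obstacle. The statement is essentially a formal consequence of two ingredients already in place: the fact that binary $[n]$-labeled trees dominate every $[n]$-labeled tree under edge contraction, and the dictionary between boundary-stratum containment and partition combinatorics reviewed in \S\ref{S:boundary}. The only small point demanding care is to verify that the refinement $T_B$ of $T$ is genuinely $[n]$-labeled and binary, which is immediate from the construction above.
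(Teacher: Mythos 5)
Your proof is correct and follows essentially the same route as the paper's: both reduce to showing that for each $[C]\in\M_{0,n}$ some $D_B$ misses $[C]$, using the fact that every boundary stratum contains a $0$-dimensional stratum in its closure together with the partition dictionary of \S\ref{S:boundary}. You merely make explicit the combinatorial refinement of $G(C)$ to a binary $[n]$-labeled tree, which the paper leaves implicit.
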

\begin{proof}
This is obvious once we recall that every boundary stratum in $\M_{0,n}$ contains 
some $0$-dimensional boundary stratum in its closure: Namely, for every $[C]\in \M_{0,n}$,
consider a locally closed boundary stratum $M_T$ such that $[C]\in M_T$. Let
$B\in \M_T$ be a $0$-dimensional boundary stratum and $D_B \in \vert L\vert$
be an effective boundary such that $B\notin \Supp(D_B)$, then $[C]\notin \Supp(D_B)$ as well.
The claim follows. 
\end{proof}

\subsection{Effective functions of three kinds}
\label{S:effective-3-kinds}
The goal of this section is to use the Effective Boundary Lemma \ref{L:main} to reinterpret in terms of weightings on
$\Gamma([n])$ (see \S\ref{S:weighted-graphs}) what it means 
for parasymmetric divisors $\DD \bigl(G, f; \vec{d}\bigr)$ defined by \eqref{E:divisor} to be 
\begin{enumerate}
\item Effective boundary.
\item Stratally effective boundary.
\item Boundary semiample.
\end{enumerate}
We keep the terminology and notation of Section \ref{S:preliminary}. 
\begin{definition}\label{D:effective-with-respect}
Let $f\co G \to \QQ$ be a symmetric function and $\vec{d}=(d_1,\dots, d_n)$
be a $G$-$n$-tuple.
We introduce the following notions of \emph{effectivity of $f$ with respect to $\vec{d}$},
listed in the order of increasing strength:
\begin{enumerate}
\item 
We say that $f$ is \emph{effective with respect to $\vec{d}$} if there
exists a weighting $w\co E([n])\to \QQ$ such that 
\begin{enumerate}
\item $w(i)=f(d_i)$ for all $i\in [n]$.
\item $w(I\mid J)\geq f(\vec{d}(I))$
for all proper partitions $I\sqcup J=[n]$.
\end{enumerate}

\item Let $T$ be an $[n]$-labeled tree. 
We say that $f$ is \emph{$T$-effective with respect to $\vec{d}$} if
there exists a weighting $w\co E([n])\to \QQ$ such that 
\begin{enumerate}
\item $w(i)=f(d_i)$ for all $i\in [n]$.
\item $w(I\mid J)\geq f(\vec{d}(I))$
for all proper partitions $I\sqcup J=[n]$.
\item $w(I\mid J)=f(\vec{d}(I))$
for all $T$-partitions $I\sqcup J=[n]$.
\end{enumerate}

\item We say that $f$ is \emph{cyclically effective with respect to $\vec{d}$} if for every 
cyclic ordering $\sigma\co [n]\to P_n$ there exists a weighting $w\co E([n])\to \QQ$ such that 
\begin{enumerate}
\item $w(i)=f(d_i)$ for all $i\in [n]$.
\item $w(I\mid J)\geq f(\vec{d}(I))$
for all proper partitions $I\sqcup J=[n]$.
\item $w(I\mid J)=f(\vec{d}(I))$
for all $\sigma$-contiguous partitions $I\sqcup J=[n]$.
\end{enumerate}
\end{enumerate}
\end{definition}

\begin{remark}[Effectivity and F-nefness]\label{R:n=34}
Every symmetric function is cyclically effective with respect to every $G$-$3$-tuple 
$\vec{d}=(d_1,d_2,d_3)$. This is easily seen by taking 
\[
w(i \sim j)=\frac{f(d_i)+f(d_j)-f(d_i+d_j)}{2}.
\]
It is also easy to see that effectivity with respect to a $G$-$4$-tuple $\vec{d}$
is equivalent to cyclic effectivity with respect to $\vec{d}$. In addition, 
$f$ is effective with respect to every 
$G$-$4$-tuple if and only if $f$ is F-nef (cf. Definition \ref{D:F-nef}). 
\end{remark}

\begin{lemma}\label{L:implication-1}
We have the following implications for a symmetric function $f\co G\to \QQ$
with respect to a $G$-$n$-tuple $\vec{d}$:
\[
\text{cyclically effective} \Longrightarrow \text{$T$-effective for all $[n]$-labeled trees $T$}
\Longrightarrow \text{effective}. 
\]
\end{lemma}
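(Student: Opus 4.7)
The plan is to prove the two implications separately. The second implication (\emph{$T$-effective for all $T$} implies \emph{effective}) is essentially immediate: any weighting $w$ witnessing $T$-effectivity of $f$ with respect to $\vec{d}$ for any particular $[n]$-labeled tree $T$ already satisfies conditions (a) and (b) of the definition of effectivity, since the additional equality conditions on $T$-partitions are strictly stronger than needed. So a single $T$-effective weighting automatically witnesses effectivity, and one does not even need $T$-effectivity for \emph{all} trees.

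For the first implication, the core task is to match each $[n]$-labeled tree $T$ to a suitable cyclic ordering $\sigma$. First, I would reduce to the binary case: given an arbitrary $[n]$-labeled tree $T$, resolve each internal node of valence greater than $3$ by inserting a small tree to produce a binary $[n]$-labeled refinement $T'$. Since the edges of $T$ form a subset of the edges of $T'$, the collection of $T$-partitions is a subcollection of the $T'$-partitions, and hence any weighting witnessing $T'$-effectivity of $f$ also witnesses $T$-effectivity. Thus it suffices to treat binary $T$.

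For a binary $T$, I would choose any planar embedding of $T$ (trees being planar); reading the leaves in the order they appear around the embedding yields a cyclic ordering $\sigma\co [n]\to P_n$. The key geometric observation is that for every edge $e$ of $T$, deleting $e$ splits $T$ into two subtrees whose leaf sets, viewed on the outer cycle $P_n$, form two contiguous arcs; hence every $T$-partition is $\sigma$-contiguous. Invoking cyclic effectivity of $f$ with respect to $\sigma$ produces a weighting $w\co E([n])\to \QQ$ satisfying $w(i)=f(d_i)$ for all $i$, the inequality $w(I\mid J)\geq f(\vec{d}(I))$ for every proper partition, and equality for every $\sigma$-contiguous partition. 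Since $T$-partitions are $\sigma$-contiguous, this $w$ witnesses $T$-effectivity of $f$ with respect to $\vec{d}$.

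The only step that needs care is the ``planarity yields contiguity'' observation, but this is standard combinatorics: if one embeds a tree so that its leaves lie on the boundary circle of a disk, then deleting any edge separates the tree into two pieces whose leaves constitute two disjoint arcs on the boundary. Everything else is a direct matching of definitions, so I do not anticipate any subtler obstacle.
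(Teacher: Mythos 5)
Your proof is correct and follows essentially the same route as the paper: the second implication is immediate from the definitions, and the first comes from embedding $T$ in the plane so that every $T$-partition becomes $\sigma$-contiguous for the induced cyclic ordering. The preliminary reduction to binary trees is harmless but unnecessary, since the planar-embedding argument applies directly to any $[n]$-labeled tree.
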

\begin{proof}
The second implication is obvious. 
For the first implication, observe that any $[n]$-labeled tree $T$ can be embedded into the 
plane so that the leaves
are identified with the vertices of $P_n$.
The resulting cyclic ordering $\sigma\co [n] \to P_n$ 
satisfies the property that every $T$-partition $I\sqcup J$ of $[n]$ is $\sigma$-contiguous.  
\end{proof}

\begin{lemma}\label{L:T-effective} Let $f\co G\to \QQ$ be a symmetric function and 
$\vec{d}=(d_1,\dots, d_n)$ be a $G$-$n$-tuple. Consider
the divisor $\DD \bigl(G, f; \vec{d} \bigr)$ from \eqref{E:divisor}.
\begin{enumerate}
\item If $f$ is effective with respect to $\vec{d}$, then  $\DD \bigl(G, f; \vec{d} \bigr)$
is an effective boundary.
\item Let $T$ be an $[n]$-labeled tree.
Suppose $f$ is $T$-effective with respect to $\vec{d}$.
Then there is an effective boundary $\QQ$-divisor $D$ on $\M_{0,n}$ such that 
$D\sim_{\QQ} \DD \bigl(G, f; \vec{d} \bigr)$ and $\Supp(D)$ does not
contain the generic point of $M_T \subset \M_{0,n}$. 
\end{enumerate}
\end{lemma}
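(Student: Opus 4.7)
The plan is to deduce both parts directly from the Effective Boundary Lemma \ref{L:main}, matching the definitions of effectivity and $T$-effectivity (Definition \ref{D:effective-with-respect}) to the combinatorial hypotheses of that lemma. Writing $D := \DD(G,f;\vec{d})$ in the form of the lemma, we have $a_k = f(d_k)$ and $b_{I,J} = f(\vec{d}(I))$, so the conditions 1(a)--(b) of Definition \ref{D:effective-with-respect} on a weighting $w$ are exactly the hypotheses of Lemma \ref{L:main} with $c_{I,J} \geq 0$.

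For part (1), I would simply invoke Lemma \ref{L:main}: a weighting $w$ witnessing effectivity of $f$ with respect to $\vec{d}$ yields an expression
\[
\DD(G,f;\vec{d}) \;=\; \sum_{I,J} \bigl(w(I\mid J) - f(\vec{d}(I))\bigr)\, \Delta_{I,J}
\]
in $\Pic(\M_{0,n}) \otimes \QQ$, and all coefficients are non-negative by assumption, so $\DD(G,f;\vec{d})$ is an effective boundary.

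For part (2), the key observation is that the generic point of the boundary stratum $M_T$ lies on precisely those boundary divisors $\Delta_{I,J}$ for which $I \sqcup J$ is a $T$-partition (this is immediate from the description $\M_T = \bigcap_{I \sqcup J \text{ a } T\text{-partition}} \Delta_{I,J}$ given in \S\ref{S:boundary}, combined with the fact that a general $[C] \in M_T$ has dual graph exactly $T$). Thus, to guarantee that the support of the effective boundary representative $D$ avoids the generic point of $M_T$, it suffices to arrange that the coefficient of $\Delta_{I,J}$ in $D$ vanishes for every $T$-partition $I \sqcup J$. Applying Lemma \ref{L:main} with the $T$-effective weighting $w$ from Definition \ref{D:effective-with-respect}(2), condition 2(c) gives $w(I\mid J) - f(\vec{d}(I)) = 0$ precisely on $T$-partitions, while condition 2(b) keeps the remaining coefficients non-negative. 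The resulting $D$ is the desired representative.

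The arguments are essentially bookkeeping; there is no real obstacle once Lemma \ref{L:main} is in hand. The one subtlety worth making explicit in the writeup is the identification between the coefficients in $D$ and the slacks $w(I\mid J) - f(\vec{d}(I))$, together with the fact that the generic point of $M_T$ sits on exactly the $T$-partition boundary divisors, so that vanishing of these particular coefficients is both necessary and sufficient for $\Supp(D)$ to miss the generic point of $M_T$.
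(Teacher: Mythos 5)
Your proof is correct and takes exactly the route the paper intends: the paper's own proof is the one-line remark that the lemma ``follows immediately from the Effective Boundary Lemma \ref{L:main} and the description of $M_T$ from \S\ref{S:boundary},'' and your writeup simply fills in that bookkeeping, correctly identifying the coefficients as the slacks $w(I\mid J)-f(\vec{d}(I))$ and noting that the generic point of $M_T$ lies on precisely the $T$-partition boundary divisors. No issues.
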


\begin{proof}
This follows immediately from the Effective Boundary Lemma \ref{L:main} and
the description of $M_T$ from \S\ref{S:boundary}.
\end{proof}

The previous lemma motivates the following definitions:

\begin{definition}\label{D:effective} 
We say that a symmetric function $f\co G \to \QQ$ is \emph{effective}
if, for all $n\geq 3$, $f$ is effective with respect to every $G$-$n$-tuple $\vec{d}$.
\end{definition}

\begin{definition}\label{D:strongly-semiample} 
We say that a symmetric function $f\co G \to \QQ$ is \emph{tree-effective}
if, for all $n\geq 3$, and for every $[n]$-labeled tree $T$, the function $f$ is $T$-effective 
with respect to every $G$-$n$-tuple $\vec{d}$.
\end{definition} 
\begin{remark}
For tree-effectivity, it suffices to consider only $[n]$-labeled unrooted binary  trees, because
every other $[n]$-labeled tree is obtained from some unrooted binary tree by a series
of edge contractions. 
\end{remark}

\begin{definition}\label{D:cyclically-semiample} 
We say that a symmetric function $f\co G \to \QQ$ is \emph{cyclically effective}
if, for all $n\geq 3$, $f$ is cyclically effective with respect to every $G$-$n$-tuple $\vec{d}$. 
\end{definition} 

\begin{remark}\label{R:implications}
It follows from  Remark \ref{R:n=34} and Lemma \ref{L:implication-1}
that for a symmetric function $f\co G\to \QQ$, we have the following implications
\[
\text{cyclically effective} \Longrightarrow \text{tree-effective}
\Longrightarrow \text{effective} \Longrightarrow \text{F-nef}.
\]
\end{remark}

\subsection{Semiampleness and nefness of divisors from effectivity of functions}

A single tree-effective (resp., effective) 
function gives rise to an infinitude of semiample (resp., nef) divisors,
obtained by increasing $n$ and varying the choice of a $G$-$n$-tuple:

\begin{theorem}\label{P:semiample}
 Suppose $f\co G \to \QQ$ is tree-effective (respectively, effective).
Then for any $G$-$n$-tuple $\vec{d}=(d_1,\dots, d_n)$, the line bundle 
$\DD \bigl(G, f; (d_1,\dots, d_n)\bigr)$ is boundary semiample 
(respectively, stratally effective boundary, hence nef) 
on $\M_{0,n}$. 
\end{theorem}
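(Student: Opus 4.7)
The plan is to assemble the theorem from three pieces of infrastructure already in place: the boundary functoriality of parasymmetric divisors (Lemma~\ref{L:functorial}), the translation between (tree-)effectivity of $f$ and effective boundary representatives with prescribed support (Lemma~\ref{L:T-effective}), and the reduction of boundary semiampleness to the finite set $B(n)$ of $0$-dimensional strata (Lemma~\ref{L:semiample}). The common thread is that the family $\{\DD(G,f;\vec{g})\}$ of parasymmetric divisors with fixed $G$ and $f$ is preserved by restriction to boundary strata, so a single (tree-)effective function governs divisors on all $\M_{0,k}$ simultaneously.

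For the \emph{effective} case, I would show that $\DD(G,f;\vec{d})$ is stratally effective boundary; nefness then follows from the Effective Dichotomy argument recalled in the paragraph following the definition of stratally effective boundary. Fix an $[n]$-labeled tree $T$ and consider the restriction of the divisor to $\M_T \simeq \prod_{x \in V(T)} \M_{0,v(x)}$. By Lemma~\ref{L:functorial}, each factor $L_x$ of this restriction is of the form $\DD(G,f;\vec{g}_x)$ for some $G$-$v(x)$-tuple $\vec{g}_x$. Effectivity of $f$ with respect to $\vec{g}_x$ then yields, via Lemma~\ref{L:T-effective}(1), an effective boundary representative of $L_x$, which is precisely what is needed for stratal effectivity.

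For the \emph{tree-effective} case, I would invoke Lemma~\ref{L:semiample}: it is enough to exhibit, for each $B \in B(n)$, an effective boundary $\QQ$-divisor linearly equivalent to $\DD(G,f;\vec{d})$ whose support avoids $B$. Each such $B$ equals $\M_T$ for a unique $[n]$-labeled unrooted binary tree $T$, and it is itself the generic point of $M_T$ since $M_T$ is $0$-dimensional. Tree-effectivity of $f$ gives $T$-effectivity with respect to $\vec{d}$, so Lemma~\ref{L:T-effective}(2) produces exactly the required divisor.

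No essential obstacle remains at this stage: the theorem is the payoff of the machinery built in Sections~\ref{S:divisors-from-functions} and~\ref{S:nefness-bpf}, and the argument is close to a one-line assembly of the three cited lemmas together with the observation on $B(n)$. The only point that warrants a moment of care is to note that Definition~\ref{D:strongly-semiample} quantifies tree-effectivity over \emph{all} $[n]$-labeled trees, which includes in particular the unrooted binary trees indexing $B(n)$ that surface via Lemma~\ref{L:semiample}, and that the $\QQ$-coefficient version of the Effective Boundary Lemma~\ref{L:main} suffices throughout, so passing from $|L|$ to $\QQ$-linear equivalence creates no friction.
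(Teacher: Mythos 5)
Your proposal is correct and follows essentially the same route as the paper: the tree-effective case combines Lemma~\ref{L:T-effective}(2) with Lemma~\ref{L:semiample} over the $0$-dimensional strata, and the effective case combines Lemma~\ref{L:functorial} with Lemma~\ref{L:T-effective}(1) to get stratal effectivity. No gaps.
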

\begin{proof}
Suppose $f\co G \to \QQ$ is tree-effective. Let $\vec{d}=(d_1,\dots, d_n)$ be a $G$-$n$-tuple.
Consider a $0$-dimensional boundary stratum $B$ of $\M_{0,n}$ and let $T$  
be the associated  $[n]$-labeled unrooted binary tree. By assumption, $f$ is $T$-effective with respect
to $\vec{d}$. It follows by Lemma \ref{L:T-effective}(2) 
that there is a $\QQ$-divisor $D$ on $\M_{0,n}$ such that 
$D\sim_{\QQ} \DD \bigl(G, f; \vec{d} \bigr)$ and $\Supp(D)$ does not contain
$B$. We conclude that $\DD \bigl(G, f; \vec{d}\bigr)$ is boundary semiample
by Lemma \ref{L:semiample}.

Suppose $f\co G \to \QQ$ is effective. Let $\vec{d}=(d_1,\dots, d_n)$ be some $G$-$n$-tuple and
set $L:=\DD \bigl(G, f; (d_1,\dots, d_n)\bigr)$. 
Then by Lemma \ref{L:functorial}, every factor of $L$ on every closed boundary stratum of $\M_{0,n}$
is of the form $\DD \bigl(G, f; \vec{g}\bigr)$ for some $G$-$k$-tuple $\vec{g}$. Since $f$ is effective
with respect to every such $\vec{g}$ by assumption, we conclude that
$\DD \bigl(G, f; \vec{g}\bigr)$ is effective boundary by Lemma \ref{L:T-effective}(1). 
\end{proof}

While Theorem \ref{P:semiample} allows one to prove semiampleness (resp., nefness)
of an infinite collection of line bundles at once by proving tree-effectivity (resp., effectivity)
of a single symmetric function, we are often interested in proving semiampleness
(resp., nefness) of a particular divisor on $\M_{0,n}$ without having to prove a more general
result about the associated F-nef function. In what follows, we introduce 
weakened versions of effectivity for functions on finite cyclic groups that allow us to do precisely that. 

\begin{definition}\label{D:weakly-effective} 
We say that a symmetric function $f\co \ZZ_n \to \QQ$ is \emph{weakly effective}
if $f$ is effective with respect to 
every $\ZZ_n$-$k$-tuple $\vec{d}$, where 
$d_1,\dots,d_k$ are non-negative integers satisfying $d_1+\cdots+d_k=n$.
\end{definition} 

\begin{definition}\label{D:weakly-semiample} 
We say that a symmetric function $f\co \ZZ_n \to \QQ$ is \emph{weakly tree-effective}
if $f$ is $T$-effective with respect to every $\ZZ_n$-$k$-tuple $\vec{d}$
for every $[k]$-labeled tree $T$, where 
$d_1,\dots,d_k$ are non-negative integers satisfying $d_1+\cdots+d_k=n$.
\end{definition} 

\begin{definition}\label{D:weakly-cyclically-semiample} 
We say that a symmetric function $f\co \ZZ_n \to \QQ$ is \emph{weakly cyclically effective}
if $f$ is cyclically effective with respect to every $\ZZ_n$-$k$-tuple $\vec{d}$, where 
$d_1,\dots,d_k$ are non-negative integers satisfying $d_1+\cdots+d_k=n$.
\end{definition}


\begin{remark}\label{R:weak-implications}
For a symmetric function $f\co \ZZ_n \to \QQ$, we have 
\[
\text{weakly cyclically effective} \Longrightarrow \text{weakly tree-effective}
\Longrightarrow \text{weakly effective}.
\]
\end{remark}

Repeating the proof of Theorem \ref{P:semiample} with minor modifications
we obtain the following result:
\begin{theorem}\label{P:weakly-semiample} 
Suppose $f\co \ZZ_n \to \QQ$ is weakly tree-effective (respectively, weakly effective).
Then for any $k$-tuple $\vec{d}=(d_1,\dots, d_k)$ of non-negative integers satisfying $\sum_{i=1}^k
d_i=n$, the line bundle 
$\DD \bigl(\ZZ_n, f; (d_1,\dots, d_k)\bigr)$ is boundary semiample 
(respectively, stratally effective boundary, hence nef)
 on $\M_{0,k}$.  
\end{theorem}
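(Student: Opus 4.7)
The plan is to adapt the proof of Theorem \ref{P:semiample} essentially verbatim, with the single additional observation that the class of $\ZZ_n$-tuples consisting of non-negative integers summing to $n$ is closed under the boundary restriction operation of Lemma \ref{L:functorial}. Concretely, if $\vec{d}=(d_1,\dots,d_k)$ is such a tuple and $I\sqcup J=[k]$ is a proper partition, then both $(\{d_i\}_{i\in I},\sum_{j\in J}d_j)$ and $(\sum_{i\in I}d_i,\{d_j\}_{j\in J})$ again consist of non-negative integers with total sum $n$, since $\sum_{i\in I}d_i+\sum_{j\in J}d_j=n$. By an easy induction on the number of internal edges, every factor of $L\vert_{\M_T}$ on any closed boundary stratum $\M_T\subset\M_{0,k}$ has the form $\DD(\ZZ_n,f;\vec{g})$ for some $\vec{g}$ in this same admissible class.

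For the boundary semiampleness statement, I would fix a $0$-dimensional boundary stratum $B\in B(k)$ and let $T_B$ denote the associated $[k]$-labeled unrooted binary tree. By weak tree-effectivity of $f$, the function $f$ is $T_B$-effective with respect to $\vec{d}$, so Lemma \ref{L:T-effective}(2) produces an effective boundary $\QQ$-divisor $D_B\sim_{\QQ}L$ whose support does not contain $B$. Applying Lemma \ref{L:semiample} to the family $\{D_B\}_{B\in B(k)}$ then yields that $L$ is boundary semiample on $\M_{0,k}$.

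For the nefness statement, I would invoke the Effective Dichotomy argument used in the proof of Theorem \ref{P:semiample}: by the closure observation above, every factor of $L\vert_{\M_T}$ on any closed boundary stratum $\M_T\subset\M_{0,k}$ is of the form $\DD(\ZZ_n,f;\vec{g})$ with $\vec{g}$ admissible, and weak effectivity of $f$ combined with Lemma \ref{L:T-effective}(1) makes each such factor an effective boundary divisor. Thus $L$ is stratally effective boundary, and in particular nef.

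The only real obstacle is the initial closure property verification; this is exactly what makes the restricted notions of weak effectivity and weak tree-effectivity sufficient, even though they are a priori much weaker than the hypotheses of Theorem \ref{P:semiample}. Once this elementary check is in hand, both statements reduce to direct translations of the earlier proof.
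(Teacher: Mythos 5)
Your proposal is correct and follows exactly the route the paper intends: the paper's proof is literally ``repeating the proof of Theorem \ref{P:semiample} with minor modifications,'' and the modification you identify --- that the class of non-negative integer tuples summing to $n$ is closed under the boundary restriction of Lemma \ref{L:functorial} --- is precisely the point (it is the same observation the paper makes explicitly at the end of the proof of Proposition \ref{P:tree-effective}). Nothing further is needed.
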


\section{Cyclic semiample divisors}
\label{S:cyclic}

In this section, we reinterpret the (weak) cyclic effectivity of F-nef
functions on $\ZZ_m$ in terms of certain properties for cyclic quadratic forms in $m$ variables, namely
\emph{balancedness} and \emph{weak balancedness} (see Definition \ref{D:balanced}
and Proposition \ref{P:cyclic=balanced}).

For a positive definite quadratic form, the balancedness conditions can be verified algorithmically. To a
balanced quadratic form, we can associate an infinite series of semiample divisors 
on various $\M_{0,n}$.
Conversely, to a symmetric divisor $D$ on $\M_{0,n}$, we associate 
a cyclic quadratic form $Q_D$ in $n$ variables, whose coefficients
depend linearly on the coefficients of $D$. We prove that $D$ is base point free if $Q_D$ is weakly balanced.
Weak balancedness of $Q_D$ is equivalent to 
finitely many linear inequalities on the coefficients of $D$ and so can be verified by a direct computation.
This gives an effective sufficient criterion for semiampleness (see Theorem \ref{T:semiample}).

\subsubsection*{Notation}
Throughout this section, we always have $G=\ZZ_m$ for some positive integer $m$.
As before, for $k\in \ZZ$, we denote by $\modp{k}{m}$ the residue of $k$ in $\{0,1,\dots, m-1\}$. 
All quadratic forms have rational coefficients, unless specified otherwise.
We use the shorthand $(a)_{r}$ for $\underbrace{a,\dots, a}_r$. 

\subsection{Balanced cyclic quadratic forms}
\label{S:quadratic}
A quadratic form $Q(x_0,\dots,x_{m-1})=\sum\limits_{0\leq i, \, j \leq m-1} Q_{i,j}x_ix_j$ is \emph{cyclic}
if \[Q(x_0,x_1,\dots,x_{m-1})=Q(x_1,\dots,x_{m-1},x_0).\]
Equivalently, $Q$ is cyclic if and only if its symmetric matrix is circulant, that is 
\[Q_{i,j}=q(i-j),\quad \text{where
$q\co \ZZ_m \ra \QQ$ is a symmetric function.}
\]

We now introduce a series of conditions on cyclic quadratic forms in $m$ variables:
\begin{condition}[$n$]\label{cond} For an integer $n=mc+r$, where $r=\modp{n}{m}$,
the minimum value of $Q$ at the integral points of the affine hyperplane 
\[
\sum_{i=0}^{m-1} x_i = n
\]
is achieved at the vector\footnote{We allow the minimum value 
to be achieved at other points; in particular, $Q$ attains the same value at all cyclic shifts of 
$v_{n}$.} 
$v_{n}:=\bigl((1+c)_{r}, (c)_{m-r}\bigr)$.
\end{condition}
\begin{example}
$x_0^2+x_1^2+\cdots+x_{m-1}^2$ satisfies Condition $(n)$ for all $n\in \ZZ$.
\end{example}

\begin{definition}[Balanced quadratic forms]\label{D:balanced} Let $Q$ be a cyclic quadratic form in $m$ variables.
\begin{enumerate}
\item[(a)]
We say that $Q$ is \emph{balanced} if Condition ($r$) holds for every $r \pmod{m}$.
\item[(b)]
We say that $Q$ is \emph{weakly balanced} if when restricted to $x_i\in \{0,1\}$ Condition $(r)$ holds for all $r=1,\dots, m-1$.

Equivalently, $Q$ is weakly balanced if and only if for $r=1,\dots, m-1$,
the leading principal $r\times r$ minor of $Q$ has the minimal 
sum of entries among all principal $r\times r$ minors.
\item[(c)]
We say that $Q$ is \emph{$\ell$-balanced}  if when restricted to $0 \leq x_i \leq \ell$ Condition $(r)$ holds for all $r \pmod{m}$. 
Thus, $1$-balanced forms are weakly balanced and $\infty$-balanced forms are balanced. 
\end{enumerate}
\end{definition}

We make three simple observations:

\begin{remark}[Finiteness of conditions] \label{R:finite-conditions}
For $Q=\sum\limits_{0\leq i, \, j \leq m-1} q(i-j)x_ix_j$, we have
\begin{multline}\label{E:shift}
Q(x_0+c,\dots, x_{m-1}+c)-Q(x_0,\dots,x_{m-1}) \\ =2c\left(\sum_{k=0}^{m-1} q(k)\right)(x_0+\dots+x_{m-1})+Q(c,\dots, c).
\end{multline}
It follows that Condition $(r)$ is satisfied if and only if Condition $(n)$ is satisfied for all $n \equiv \pm \, r \pmod{m}$. Moreover, to show that $Q$ is balanced, it suffices to check 
Condition $(r)$ for every $r\gg 0$ but restricted to positive $x_i$'s only.   
\end{remark}
\begin{remark}[Reduction to PSD] \label{R:PSD}
For each $k=0,\dots, m-1$, 
Condition ($0$) is equivalent to positive semi-definiteness of the quadratic form 
\[
\widetilde Q_k:=Q(x_0, \dots, x_{k-1},-\sum_{i\neq k} x_i, x_{k+1}, \dots, x_{m-1}).
\] 
Furthermore, note that $\widetilde{Q}:=\sum_{k=0}^{m-1} \widetilde Q_k$ satisfies 
\begin{equation}\label{E:PSD}
\widetilde{Q}=mQ+\bigl(mq(0)-2\sum_{k=0}^{m-1}q(k)\bigr)\bigl(x_0+\cdots+x_{m-1}\bigr)^2.
\end{equation}
We conclude that $Q$ satisfies Condition ($0$) if and only if $\widetilde{Q}$ is positive semi-definite,
and $Q$ satisfies Condition ($n$) if and only if $\widetilde{Q}$ does.
\end{remark}

\begin{remark}[Testing for balancedness] Suppose $Q$ is a positive definite cyclic quadratic form. Then balancedness of $Q$
is algorithmically verified in two steps by: \par
(1) Finding all vectors $v$ such that $Q(v) \leq \max \{ Q(v_r)  \mid r=0,\dots, m-1\}$.  \par
(2) Testing whether the vectors found in Step (1) violate Conditions ($n$).

For an arbitrary quadratic form $Q$ in $m$ variables, one can determine  
whether $Q$ is $\ell$-balanced by a direct evaluation of $Q$ at all integer points of the hypercube $[0,\ell]^m$. 
\end{remark}

We give several examples of balanced quadratic forms.
\begin{example}
$A(x_0,\dots, x_{m-1}):=\sum_{i=0}^{m-1} x_i^2$.
\end{example}

\begin{example}
$B(x_0,\dots, x_{m-1}):=\sum_{i=0}^{m-1}(x_i-x_{i+1}+x_{i+2})^2$, where $m\geq 4$ and $m \not\equiv \pm 1 \pmod{6}$.
\end{example}

\begin{example}
$C(x_0,\dots, x_{m-1}):=\sum_{i=0}^{m-1}(x_i+x_{i+k-1})^2$, where $m=2k$ and $k$ is odd.
\end{example}

\begin{example}
$D(x_0,\dots, x_{m-1}):=\sum_{i=0}^{m-1}(x_i+x_{i+k})^2$, where $m=2k+1$. 
\end{example}

Note that $A$ is obviously balanced. We prove balancedness of $B$ in the following lemma, and leave balancedness
of $C$ and $D$ as an exercise to the reader. 

\begin{lemma}\label{quadratic-B}
The quadratic form 
\begin{equation}\label{E:3m-1}
B(x_0,\dots, x_{m-1})=\sum_{i=0}^{m-1} (x_i-x_{i+1}+x_{i+2})^2
\end{equation}
is balanced for all $m\geq 4$ such that $m \not\equiv \pm 1 \pmod{6}$.
\end{lemma}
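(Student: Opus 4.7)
The plan is to reduce the balancedness of $B$ to an integer lattice minimization. Setting $y_i := x_i - x_{i+1} + x_{i+2}$, we have $B(x) = \sum_{i=0}^{m-1} y_i^2$. The assignment $x \mapsto y$ defines a circulant linear operator $\Phi\colon \ZZ^m \to \ZZ^m$ whose symbol is $p(t) = 1 - t + t^2$; since $p(1) = 1$, we have $\sum_i y_i = \sum_i x_i$. Thus minimizing $B$ over integer $x$ with $\sum x_i = n$ is equivalent to minimizing $\sum_i y_i^2$ over $y$ in the cyclically invariant sublattice $\Phi(\ZZ^m) \subset \ZZ^m$ subject to $\sum_i y_i = n$.

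Write $n = mc + r$ with $0 \leq r < m$. The classical bound on sums of squares of integers with fixed sum gives $\sum y_i^2 \geq S(n) := mc^2 + r(2c+1)$ for any integer $y$ with $\sum y_i = n$, with equality precisely when each $y_i \in \{c, c+1\}$. A direct calculation of $B(v_n)$ using the shift identity \eqref{E:shift} (together with the easy facts $\sum_k q(k) = 1$ and $B(\mathbf{1}) = m$) yields $B(v_n) = S(n)$ for $r \in \{0, 2, 3, \ldots, m-2\}$, so Condition $(r)$ in these cases is immediate from the classical bound. However, $B(v_n) = S(n) + 2$ for the two boundary residues $r = 1$ and $r = m - 1$, and these cases demand a sharper argument.

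This is where the residue of $m$ modulo $6$ enters. Since $\Phi$ is circulant with symbol $p(t)$ whose roots are the primitive sixth roots of unity $\zeta_6^{\pm 1}$, where $\zeta_6 = e^{i\pi/3}$, one computes $\det \Phi = \prod_{k=0}^{m-1} p(\omega^k) = |\zeta_6^m - 1|^2$ with $\omega = e^{2\pi i/m}$. This determinant takes the values $0, 1, 3, 4, 3, 1$ respectively as $m \equiv 0, 1, 2, 3, 4, 5 \pmod{6}$, so $\Phi$ is unimodular exactly for $m \equiv \pm 1 \pmod{6}$; for all other residues $\Phi(\ZZ^m)$ is a proper sublattice of $\ZZ^m$. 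By cyclic invariance of $\Phi(\ZZ^m)$, if one standard basis vector $e_k$ lay in $\Phi(\ZZ^m)$, then all of them would, forcing $\Phi(\ZZ^m) = \ZZ^m$. Hence under our hypothesis $m \not\equiv \pm 1 \pmod{6}$, no $e_k$ belongs to $\Phi(\ZZ^m)$.

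For $r = 1$, write $y = c\mathbf{1} + z$ with $\sum z_i = 1$; since $c\mathbf{1} = \Phi(c\mathbf{1})$ lies in $\Phi(\ZZ^m)$, we have $y \in \Phi(\ZZ^m)$ iff $z \in \Phi(\ZZ^m)$, and $\sum y_i^2 = S(n) - 1 + \sum z_i^2$. Any integer $z$ with $\sum z_i = 1$ and $\sum z_i^2 = 1$ is a standard basis vector, excluded above; moreover no integer $z$ with $\sum z_i = 1$ admits $\sum z_i^2 = 2$. Hence $\sum z_i^2 \geq 3$, and so $\sum y_i^2 \geq S(n) + 2 = B(v_n)$. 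The case $r = m-1$ is entirely symmetric, via $y = (c+1)\mathbf{1} + z$ with $\sum z_i = -1$ and $\mathbf{1} \in \Phi(\ZZ^m)$. The main technical point is the determinant formula $\det \Phi = |\zeta_6^m - 1|^2$ pinpointing exactly the excluded residues; once this is available, the cyclic-invariance argument trivializes the exclusion of the standard basis vectors from the image.
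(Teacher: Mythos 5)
Your proof is correct, and it reaches the same destination as the paper's by a partly different road. The shared skeleton: both arguments write $B(x)=\sum_i y_i^2$ with $y_i=x_i-x_{i+1}+x_{i+2}$, note that $\sum_i y_i=\sum_i x_i$ because the symbol $1-t+t^2$ evaluates to $1$ at $t=1$, dispose of all residues $r\neq \pm 1$ by an elementary lower bound on a sum of squares of integers with fixed sum (the paper uses $\sum y_i^2\geq \sum |y_i| \geq |\sum y_i|$ after reducing to $c=0$ via \eqref{E:shift}; your bound $S(n)$ does the same work for general $c$ directly), and isolate $r=\pm 1$ as the only delicate residues, where $B(v_n)$ exceeds the naive bound by $2$; in both arguments the intermediate value $\sum z_i^2=2$ is excluded by a parity observation. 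Where you genuinely diverge is in excluding $\sum z_i^2=1$, i.e., in showing that no standard basis vector $e_k$ lies in $\Phi(\ZZ^m)$: the paper solves the linear recurrence $a_i-a_{i+1}+a_{i+2}=0$ explicitly and exhibits the period-$6$ solution vectors, which exist precisely when $m\equiv\pm 1\pmod 6$, whereas you compute the circulant determinant $\det\Phi=|\zeta_6^m-1|^2\in\{0,1,3,4,3,1\}$ and use cyclic invariance of the image lattice to conclude that either every $e_k$ lies in $\Phi(\ZZ^m)$ (forcing unimodularity) or none does. Your route is shorter and explains conceptually why $6$ appears (the roots of $1-t+t^2$ are primitive sixth roots of unity, so $\Phi$ degenerates or becomes unimodular exactly according to $m\bmod 6$); the paper's route is more constructive and, as a byproduct, produces the explicit vectors witnessing the failure of Condition $(1)$ when $m\equiv\pm 1\pmod 6$, which is the content of the converse implicit in the lemma's hypothesis.
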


\begin{proof}
Clearly, $B$ is positive semi-definite and even positive definite if $m \not\equiv 0 \pmod 6$. 
Thus $B$ satisfies Condition $(0)$. We begin by verifying Condition $(r)$ for $2\leq r \leq m-2$. 
For any $(a_0, \dots, a_{m-1})\in \ZZ^m$ with $\sum_{i=0}^{m-1} a_i=r$, we
have 
\begin{multline*}
B(a_0,\dots,a_{m-1})\geq \sum_{i=0}^{m-1} \vert a_i-a_{i+1}+a_{i+2}\vert \\ \geq \vert \sum_{i=0}^{m-1} (a_i-a_{i+1}+a_{i+2})\vert 
=\vert \sum_{i=0}^{m-1} a_i \vert =r = B(v_r).
\end{multline*}
Hence Condition $(r)$ is satisfied for $2\leq r \leq m-2$. 

When $r=1$, we have $B(v_1)=3$. By the above, for any $(a_i)_{i=0}^{m-1}\in \ZZ^m$ with $\sum_{i=0}^{m-1} a_i=1$,
we have the estimate $B(a_0,\dots,a_{m-1})\geq 1$. Since $B(a_0,\dots,a_{m-1})$ is clearly odd, we conclude that
$B$ fails to satisfy Condition $(1)$ if and only if there 
is $(a_i)_{i=0}^{m-1}\in \ZZ^m$ such that $a_0-a_1+a_2=\pm 1$ and $a_i-a_{i+1}+a_{i+2}=0$ for all $i=1,\dots, m-1$. 
One checks that such a vector exists if and only if $m\equiv  1 \pmod{6}$ or $m\equiv -1 \pmod{6}$, in which case the vector is
either $(-1, -1, 0,1,1,1,0,\underbrace{-1,-1,0,1,1,0}_{\text{period}}, \cdots )$
or $(-1, -1, 0,1,0,\underbrace{-1,-1,0,1,1,0}_{\text{period}}, \cdots )$, up to a sign and cyclic shift. 
We are done by Remark \ref{R:finite-conditions}.
\end{proof}

We close this section with an open problem, whose solution will have an immediate 
bearing on the semiample cone of $\M_{0,n}$, as we shall see in the sequel. 
\begin{question} For an integer $m$, describe the convex cone of balanced cyclic quadratic forms in $m$ variables. 
Given an integral linear form $L(x_0,x_1,\dots,x_{m-1})$, determine whether the cyclic quadratic form
\[
Q(L)(x_0,\dots,x_{m-1}):=\sum_{k=0}^{m-1} L^2(x_k, x_{k+1}, \dots,x_{k+m-1})
\] 
is balanced. For example, for which $m$ is the following form balanced:
\[
\sum_{i=0}^{m-1} (x_i-x_{i+1}+x_{i+2}-x_{i+3}+x_{i+4})^2 \ ?
\]
\end{question}

\subsection{Quadratic forms of functions on cyclic groups}
\label{quadratic-from-function}
For a symmetric function $f\co \ZZ_m \ra \QQ$,
we define $q_f\co  \ZZ_m \ra \QQ$ by 
\begin{equation}\label{E:2nd-diff}
q_f(a):=\frac{1}{2}\bigl(f(a+1)+f(a-1)-2f(a)\bigr).
\end{equation}
\begin{remark}\label{R:F-basis}
Given a function $f$ as above, the number $q_f(a)$ can be interpreted as follows.
Consider the divisor $D:=\DD(\ZZ_m,f; (1)_{m})$ on $\M_{0,m}$. 
Then for the F-curve $F_{1,1,a-1}$ (we follow the notation of \cite[\S2.2.2]{agss}), we have
\[
D\cdot F_{1,1,a-1}=2q_f(a)+2f(1)-f(2).
\]
In other words, $q_f$ is determined (up to a constant) by the intersection 
numbers of $D$ with the collection of F-curves $\{F_{1,1,a-1}\}_{a=2}^{\lfloor m/2\rfloor}$,
which form a basis for $N_1(\M_{0,m}/\Sm_m)$ by \cite[Proposition 4.1]{agss}.
\end{remark}

We associate to $f$ a cyclic quadratic form defined by
\begin{equation}\label{E:quadratic}
Q_f(x_0,\dots, x_{m-1}):= \sum_{0\leq i,\, j\leq m-1} q_f(i-j) x_ix_j,
\end{equation}
and call $Q_f$ \emph{the associated quadratic form of $f$}. Note that 
$\sum_{k=0}^{m-1} q_f(k)=0$ and consequently by \eqref{E:shift}, we have
\begin{equation}\label{E:remark-Qf}
Q_f(x_0+c, \dots, x_{m-1}+c)=Q_f(x_0, \dots, x_{m-1}).
\end{equation}

We are interested in exploring when $Q_f$ is balanced or weakly balanced. 
We begin with a simple criterion for $Q_f$ to be weakly balanced.
\begin{lemma}\label{L:weakly-balanced}
$Q_f$ is weakly balanced if and only if for every $S \subset \{0,\dots, m-1\}$, we have 
\begin{equation}\label{E:weakly-balanced}
f(\vert S\vert)-f(0) \geq \frac{1}{2} \sum_{i,\, j\in S} \bigl( f(i-j-1)+f(i-j+1)-2f(i-j)\bigr).
\end{equation}
\end{lemma}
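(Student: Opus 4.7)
The plan is to identify both sides of the asserted inequality with evaluations of the cyclic quadratic form $Q_f$ at $0/1$ vectors, after which the equivalence with weak balancedness becomes a literal transcription of Definition \ref{D:balanced}(b). First I would observe that, for the indicator vector $\mathbf{1}_S\in\{0,1\}^m$ of any $S\subseteq\{0,\dots,m-1\}$, the definitions \eqref{E:quadratic} and \eqref{E:2nd-diff} give directly
\[
Q_f(\mathbf{1}_S)=\sum_{i,j\in S} q_f(i-j)=\tfrac{1}{2}\sum_{i,j\in S}\bigl(f(i-j-1)+f(i-j+1)-2f(i-j)\bigr),
\]
so the right-hand side of the claimed inequality is nothing other than $Q_f(\mathbf{1}_S)$.

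The main step is then the identity $Q_f(v_r)=f(r)-f(0)$ for every $1\le r\le m-1$, where $v_r=((1)_r,(0)_{m-r})$ is the leading vector appearing in Condition $(r)$. Grouping the $r^2$ pairs $(i,j)\in\{0,\dots,r-1\}^2$ by $k:=i-j$ and using the symmetry $q_f(-k)=q_f(k)$, one has
\[
Q_f(v_r)=r\,q_f(0)+2\sum_{k=1}^{r-1}(r-k)\,q_f(k);
\]
since $r\le m-1$, no modular wraparound in the argument of $f$ occurs. Substituting $2q_f(k)=\bigl(f(k+1)-f(k)\bigr)-\bigl(f(k)-f(k-1)\bigr)$ and performing Abel summation with weights $a_k=r-k$, the internal differences telescope against the boundary contributions $r\,q_f(0)=r(f(1)-f(0))$ and the whole expression collapses to $f(r)-f(0)$. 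This Abel-summation step is the principal technical ingredient; the rest is bookkeeping. So the left-hand side of the asserted inequality is $Q_f(v_{|S|})$.

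With both identifications in hand, the lemma is nothing but a re-reading of Definition \ref{D:balanced}(b): by definition, $Q_f$ is weakly balanced precisely when for every $1\le r\le m-1$ and every $S\subseteq\{0,\dots,m-1\}$ with $|S|=r$, the value $Q_f(v_r)$ stands in the prescribed relation to the principal-minor sum $Q_f(\mathbf{1}_S)$; substituting $Q_f(v_r)=f(r)-f(0)$ on one side and the half-sum expression for $Q_f(\mathbf{1}_S)$ on the other gives exactly the stated inequality. The excluded cases $|S|=0$ and $|S|=m$ contribute $0$ to both sides (using $f(m)=f(0)$ and $\sum_{k=0}^{m-1} q_f(k)=0$), so may be included for free. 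The only substantive obstacle is the telescoping computation of $Q_f(v_r)$; beyond that the lemma is just a dictionary between the quadratic form and the function $f$.
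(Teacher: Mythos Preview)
Your proposal is correct and follows the paper's one-line approach—identify the two sides as $Q_f(\mathbf{1}_S)$ and $Q_f(v_{|S|})$ and invoke Definition~\ref{D:balanced}(b)—while helpfully supplying the telescoping verification $Q_f(v_r)=f(r)-f(0)$ that the paper leaves implicit. Your hedge ``stands in the prescribed relation'' is prudent: the displayed inequality as printed is reversed (weak balancedness says the leading principal minor is \emph{minimal}, so it should read $\leq$), as the use in Corollary~\ref{implications} confirms.
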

\begin{proof}
This follows immediately from Definition \ref{D:balanced} (b) by comparing the sum of entries in the principal 
minor of $Q_f$ given by $S$ to the sum of entries of the leading principal $\vert S\vert \times \vert S\vert$
minor of $Q_f$.
\end{proof}
\begin{remark}
It follows from Remark \ref{R:F-basis} that \eqref{E:weakly-balanced}
is equivalent to the condition that $\DD(f,\ZZ_m; (1)_{m})$ intersects certain explicit curve classes
on $\M_{0,m}$ non-negatively. It is not clear to us whether these curve classes have any geometric
significance, or whether they are effective. 
\end{remark}

\begin{corollary}\label{implications}
Suppose $Q_f$ is weakly balanced and $f(0)=0$. Then 
\[
f(a)+f(b)+f(c)+f(a+b+c)\geq f(a+b)+f(a+c)+f(b+c)
\]
for all $a,b,c\in \{0,\dots, m-1\}$ such that $a+b+c\leq m$.
\end{corollary}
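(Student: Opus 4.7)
The plan is to apply Lemma \ref{L:weakly-balanced} to a single, cleverly chosen subset $S\subset\{0,1,\ldots,m-1\}$ and to compare the resulting inequality with the (tautological) equality obtained by applying the lemma to the leading block $S_0=\{0,1,\ldots,a+c-1\}$. If any of $a, b, c$ vanishes the claimed inequality is immediate, so I may assume $a, b, c \geq 1$ and $a + b + c \leq m$; in particular $|S|:=a+c\leq m-1$, so Lemma \ref{L:weakly-balanced} applies. I take
\[
S=\{0,1,\ldots,a-1\}\cup\{a+b,a+b+1,\ldots,a+b+c-1\},
\]
the disjoint union of two integer intervals of sizes $a$ and $c$ separated by a gap of size $b$.

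The unifying computational tool is the expansion
\[
f(n)=nf(1)+\sum_{k\geq 1}(n-k)_{+}\,g(k),\qquad g(k):=f(k+1)+f(k-1)-2f(k),
\]
valid for $n\geq 0$ because $f(0)=0$; it follows by a one-line telescoping argument from $g(k)=\delta(k+1)-\delta(k)$ with $\delta(j):=f(j)-f(j-1)$. Substituting into both sides of the desired F-nef inequality, the $f(1)$-contributions cancel identically (by $a+b+c+(a+b+c)-(a+b)-(a+c)-(b+c)=0$), and the inequality becomes
\[
\sum_{k\geq 1}\phi_{a,b,c}(k)\,g(k)\geq 0,
\]
where $\phi_{a,b,c}(k):=(a-k)_{+}+(b-k)_{+}+(c-k)_{+}+(a+b+c-k)_{+}-(a+b-k)_{+}-(a+c-k)_{+}-(b+c-k)_{+}$. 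The same expansion applied to the right-hand side of Lemma \ref{L:weakly-balanced} shows that for any subset $T\subset\{0,\ldots,m-1\}$ of size $r$, the right-hand side equals $rf(1)+\sum_{k\geq 1}M_T(k)\,g(k)$, where $M_T(k)$ counts ordered pairs $i<j$ in $T$ with $s_j-s_i=k$. For $T=S_0$ one has $M_{S_0}(k)=(a+c-k)_{+}$, and the lemma degenerates to an equality recovering the expansion of $f(a+c)$.

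Combining Lemma \ref{L:weakly-balanced} for $S$ with the equality for $S_0$ reduces the corollary to the purely combinatorial identity
\[
M_S(k)-M_{S_0}(k)=\phi_{a,b,c}(k)\qquad\text{for every }k\geq 1.
\]
To establish this identity I would split $M_S(k)$ into within-block and between-block pair counts. The within-block pairs contribute $(a-k)_{+}+(c-k)_{+}$. The between-block count $\#\{(i,j):i\in\{0,\ldots,a-1\},\,j\in\{a+b,\ldots,a+b+c-1\},\,j-i=k\}$ is a trapezoidal function of $k$ with breakpoints at $b$, $\min(a,c)+b$, $\max(a,c)+b$, and $a+b+c$, and a direct case analysis shows it matches term-by-term the bracket $(b-k)_{+}+(a+b+c-k)_{+}-(a+b-k)_{+}-(b+c-k)_{+}$ in the decomposition $\phi_{a,b,c}=\bigl[(a-k)_{+}+(c-k)_{+}-(a+c-k)_{+}\bigr]+\bigl[(b-k)_{+}+(a+b+c-k)_{+}-(a+b-k)_{+}-(b+c-k)_{+}\bigr]$. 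The main obstacle is precisely this piecewise-linear bookkeeping: arranging the four breakpoints so as to avoid a lengthy enumeration of subcases depending on the relative order of $a$, $b$, $c$.
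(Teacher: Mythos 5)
Your proposal is correct and is essentially the paper's proof: the paper's entire argument is to take $S=\{0,\dots,a-1\}\cup\{a+b,\dots,a+b+c-1\}$ in Lemma \ref{L:weakly-balanced}, which is exactly the set you chose, and the rest of your write-up just makes explicit the second-difference computation the paper leaves to the reader. (The piecewise-linear bookkeeping you worry about can be bypassed by telescoping $q_f(k)=\tfrac12(\delta(k+1)-\delta(k))$ with $\delta(u)=f(u)-f(u-1)$ over the two blocks, but your convolution identity is also valid.)
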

\begin{proof}
Take $S=\{0,\dots, a-1\}\cup \{a+b,\dots, a+b+c-1\}$
in Lemma \ref{L:weakly-balanced}. 
\end{proof}
Unfortunately, we do not have a criterion for balancedness of $Q_f$. We note however that, by Remark \ref{R:PSD}, $Q_f$ is balanced if and only if 
\[
\widetilde{Q}_f=Q_f+2m\bigl(f(1)-f(0)\bigr)\bigl(x_0+\cdots+x_{m-1}\bigr)^2
\]
is positive semi-definite and balanced. 

\subsection{Cyclic effectivity of F-nef functions and balancedness of quadratic forms}


\begin{prop}\label{P:cyclic=balanced}
Consider a symmetric function $f\co \ZZ_m \to \QQ$ with $f(0)=0$.
\begin{enumerate}
\item The function $f$ is cyclically 
effective if and only if  the associated
quadratic form $Q_f$ is balanced. 
\item The function $f$ is weakly cyclically 
effective if and only if  the associated
quadratic form $Q_f$ is weakly balanced. 
\end{enumerate}
\end{prop}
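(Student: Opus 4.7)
The plan is to translate both sides of the stated equivalence into the same explicit condition on a canonical weighting of $\Gamma([n])$ via the Effective Boundary Lemma \ref{L:main}, and then to match those conditions with balancedness of $Q_f$ by direct computation. The argument reduces the general case to the canonical $\vec d = (1)_n$.

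First I would consider $\vec d = (1)_n$ with $m \mid n$ and the standard cyclic ordering $\sigma_0$ of $[n]$. Define
\[
w(i \sim j) := -q_f\bigl((j-i) \bmod m\bigr).
\]
The telescoping identity $\sum_{c=0}^{m-1} q_f(c) = 0$, which is immediate from the definition of $q_f$ combined with $f(0) = f(m) = 0$, gives $w(i) = f(1)$ for all $i$. Grouping pairs $(a,b) \in I \times I$ by the residue $(b-a) \bmod m$ and using the same vanishing sum yields
\[
w(I \mid J) = \sum_{a,b \in I} q_f\bigl((b-a) \bmod m\bigr) = Q_f(x_I),
\]
where $x_I \in \ZZ_{\geq 0}^m$ is the \emph{count vector} of $I$, recording how many elements of $I$ lie in each residue class modulo $m$. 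The central computational identity to establish is $Q_f(v_s) = f(s)$ for $0 \leq s \leq m$, which follows by double Abel summation: the telescoping $\sum_{d=1}^{k}(f(d+1) + f(d-1) - 2f(d)) = f(k+1) - f(k) - f(1)$ combined with a second summation by parts gives $Q_f(v_s) = s f(1) + 2 \sum_{d=1}^{s-1}(s-d) q_f(d) = f(s)$; translation invariance \eqref{E:remark-Qf} of $Q_f$ extends this to every $s \geq 0$. Since the count vector of any contiguous arc of length $s$ is a cyclic-and-translation shift of $v_{s \bmod m}$, we deduce $w(I \mid J) = f(\vec d(I))$ for every contiguous $I$, so $w$ satisfies all the constraints of Definition \ref{D:effective-with-respect}(3). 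By the basis property of $\mathcal A_{\sigma_0}$, the coefficient of $\Delta_{I,J}$ for a non-contiguous $I \subset [n]$ in the expansion of $\DD(\ZZ_m, f; (1)_n)$ is
\[
c_I = w(I \mid J) - f(\vec d(I)) = Q_f(x_I) - Q_f(v_{|I| \bmod m}).
\]

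Thus cyclic effectivity of $f$ with respect to $(1)_n$ in $\sigma_0$ is equivalent to $Q_f(x_I) \geq Q_f(v_{|I| \bmod m})$ for every non-contiguous $I \subset [n]$. As $n$ varies over multiples of $m$ and $I$ over all subsets of $[n]$, the count vectors $x_I$ exhaust $\ZZ_{\geq 0}^m$, and the translation invariance \eqref{E:remark-Qf} (as in Remark \ref{R:finite-conditions}) extends the inequalities to all integer vectors; this yields full balancedness, proving part (1). Restricting to $n = m$ forces $x_I \in \{0,1\}^m$ and recovers exactly weak balancedness, proving part (2). To extend from $\vec d = (1)_n$ to an arbitrary $\ZZ_m$-$k$-tuple $\vec d = (d_1, \ldots, d_k)$ with $\sum_i d_i = n$, I would push forward $w$ to $\Gamma([k])$: place the $d_i$ slots of group $g_i$ consecutively in the lift of $\sigma_k$ to $[n]$, and set $\widetilde w(i \sim j) := \sum_{a \in g_i,\, b \in g_j} w(a \sim b)$. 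Direct bookkeeping shows $\widetilde w(i) = f(d_i)$ and $\widetilde w(I \mid J) = Q_f(x_{I^*})$ for $I^* = \bigsqcup_{i \in I} g_i$; contiguity is preserved by this grouping, empty groups (where $d_i = 0$) contributing zero weight and not interfering with the arc structure on $[n]$. The single technical step is the double Abel summation identifying $Q_f(v_s)$ with $f(s)$, as it pins down the canonical minimizers of balancedness as exactly the boundary values $f(\vec d(I))$ that appear in the definition of $\DD(\ZZ_m, f; \vec d)$; the rest is essentially combinatorial bookkeeping.
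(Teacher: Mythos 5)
Your proposal is correct and takes essentially the same route as the paper: the edge weighting $w(i\sim j)=-q_f(j-i)$ on the cyclically arranged points, the flow identity $w(I\mid J)=Q_f(x_I)$ for the count vector $x_I$, the telescoping evaluation $Q_f(v_s)=f(s)$, and the push-forward by grouping consecutive slots are precisely the content of the paper's cyclic weighting (Lemma \ref{L:cyclic}) and its use in the proof of the proposition. The only difference is presentational — you treat $\vec{d}=(1)_n$ first and then group, whereas the paper constructs the weighting for a general $\vec{d}$ directly on the $N$-gon $P_N$ and pushes it down to $\Gamma([n])$.
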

Our proof of the above proposition relies on a simple construction of a weighting
on $\Gamma([n])$, called \emph{the cyclic weighting}, which we proceed to explain.

\subsubsection*{Cyclic weighting}

A cyclic weighting on $\Gamma([n])$ depends 
on a choice of a $G$-$n$-tuple $\vec{d}$ and a cyclic ordering of $[n]$, which we think of 
as a bijection $\sigma\co [n] \to P_n$, where $P_n$ is a regular $n$-gon. 
Before we construct the cyclic weighting in Lemma \ref{L:cyclic}, we need to introduce a bit 
of terminology and notation.


\begin{definition}\label{D:sigma-vector}
Given an $n$-tuple $(d_1,\dots,d_n)$ of residues modulo $m$ 
such that $\sum_{i=1}^n d_i \equiv 0 \pmod m$, a partition $I\sqcup J=[n]$, and 
a cyclic ordering $\sigma$ of $[n]$, 
we define the 
vector $\sigma_I(\vec{d})\in \ZZ^{m}$ as follows: 
\begin{enumerate}
\item Replace each $d_i$ by a positive integer representative of the residue.
Set $N=\sum_{i=1}^n d_i$.
\item Consider a regular $N$-gon $P_N$ and 
a correspondence
between $[n]$ and the vertices of $P_N$ such that 
\begin{enumerate}
\item Each $i$ corresponds to some $d_i$ contiguous vertices $S_i$ of $P_N$.
\item $\cup_{i=1}^n S_i = P_{N}$.
\item The order in which the $S_i$'s occur in $P_N$ as one goes in the clockwise
direction along $P_N$ is given by $\sigma$.
\end{enumerate}
\item For $j=0,\dots, m-1$, we define $z_j$ to be the number of vertices of $P_N$ 
that belong to $\cup_{i\in I} S_i$ 
and whose index is congruent to $j$ modulo $m$. 
At last, we set 
\[
\sigma_I(\vec{d}):=(z_0,\dots,z_{m-1}).
\]
\end{enumerate}
\end{definition}

\begin{remark} 
Of course,  $\sigma_I(\vec{d})=(z_0,\dots,z_{m-1})$ 
is only defined up to a multiple of $(1,1,\dots,1)$,
and up to a cyclic permutation. 
Note that $\sum_{i=0}^{m-1}z_i= \sum_{i\in I} d_i= \vec{d}(I) \pmod{m}$.
We also have $\sigma_I(\vec{d}) \equiv -\sigma_J(\vec{d}) \pmod{m}$
for a partition $I\sqcup J=[n]$. It is worth noting that if $I\sqcup J=[n]$ is $\sigma$-contiguous, 
then $\sigma_I(\vec{d})=\bigl((c+1)_{r}, (c)_{m-r}\bigr)$, where $\vec{d}(I)=mc+r$. 
\end{remark}

\begin{lemma}[Cyclic weighting]
\label{L:cyclic} Suppose $f\co \ZZ_m \to \QQ$ is a symmetric function 
and $Q_f$ 
is the associated quadratic form.
Then for any $\ZZ_m$-$n$-tuple $\vec{d}=(d_1,\dots, d_n)$
and any cyclic ordering $\sigma \co [n] \to P_n$,
there exists a unique weighting 
$w_\sigma$ on $\Gamma([n])$ such that 
for every $\sigma$-contiguous partition $I\sqcup J=[n]$ we have
\begin{equation}\label{E:cyclic-weight}
w_\sigma(I\mid J)=Q_f(\sigma_I(\vec{d}))=f(\vec{d}(I))-f(0).
\end{equation}
Moreover, for such $w_\sigma$, 
it continues to hold that
$w_\sigma(I\mid J)=Q_f(\sigma_I(\vec{d}))$ for every partition $I\sqcup J=[n]$.
\end{lemma}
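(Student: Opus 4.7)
The plan is to produce $w_\sigma$ by an explicit construction modeled on the cyclic picture of Definition \ref{D:sigma-vector}, verify the required flow identity for \emph{all} partitions (which yields the ``moreover'' clause for free), evaluate $Q_f(\sigma_I(\vec{d}))$ on $\sigma$-contiguous partitions via a short telescoping argument, and establish uniqueness by induction on cyclic distance. Concretely, I fix positive integer lifts of the residues $d_i$, set $N=\sum d_i$ (a multiple of $m$ since $\vec{d}$ is a $\ZZ_m$-$n$-tuple), let $S_1,\dots,S_n\subset P_N$ be the contiguous arcs of $d_i$ vertices arranged according to $\sigma$, and define
\[
w_\sigma(i\sim j) := -\sum_{x\in S_i,\, y\in S_j} q_f(\modp{x-y}{m}), \qquad i\neq j.
\]
Since $m\mid N$ and $\sum_{k\in\ZZ_m} q_f(k)=0$ (noted after \eqref{E:quadratic}), the formula is independent of the choice of lifts, because enlarging any $d_i$ by $m$ contributes to each term an extra factor of $\sum_{k\in\ZZ_m}q_f(k)=0$.

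For an arbitrary partition $I\sqcup J=[n]$, set $X=\bigsqcup_{i\in I} S_i\subset P_N$. For each fixed $x\in X$ one has $\sum_{y=0}^{N-1} q_f(\modp{x-y}{m}) = (N/m)\sum_{k\in\ZZ_m} q_f(k) = 0$, so bilinearity gives
\[
w_\sigma(I\mid J) = -\sum_{x\in X,\, y\notin X} q_f(\modp{x-y}{m}) = \sum_{x,x'\in X} q_f(\modp{x-x'}{m}) = Q_f(\sigma_I(\vec{d})),
\]
the last equality being the definition of $Q_f$ evaluated at $\sigma_I(\vec{d})$, whose $j$-th entry counts $\{x\in X : x\equiv j\pmod m\}$. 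This simultaneously establishes the required flow identity on $\sigma$-contiguous partitions and the ``moreover'' clause.

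To identify $Q_f(\sigma_I(\vec{d}))$ with $f(\vec{d}(I))-f(0)$ when $I$ is $\sigma$-contiguous, write $\vec{d}(I)=mc+r$. Then $\sigma_I(\vec{d})$ is a cyclic shift of $\bigl((c+1)_r,(c)_{m-r}\bigr)$, and by translation invariance \eqref{E:remark-Qf} we have $Q_f(\sigma_I(\vec{d}))=Q_f((1)_r,(0)_{m-r})=\sum_{i,j=0}^{r-1}q_f(i-j)$. Expressing $q_f$ as a second difference of $f$ and telescoping yields $\sum_{i,j=0}^{r-1}q_f(i-j)=f(r)-f(0)$, which equals $f(\vec{d}(I))-f(0)$. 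This telescoping is the one nontrivial computation in the entire argument.

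For uniqueness, suppose $w$ is any weighting with $w(I\mid J)=0$ for every $\sigma$-contiguous partition; in particular $w(k\mid J_k)=0$ for every singleton. I will induct on cyclic distance $d$ to show that $w(i\sim j)=0$ for every pair $(i,j)$ at cyclic distance $d$. Letting $I$ be the shorter $\sigma$-arc from $i$ to $j$ (of length $d+1$), every other pair inside $I$ has strictly smaller cyclic distance; the identity
\[
w(I\mid J) = \sum_{k\in I} w(k\mid J_k) - 2\sum_{\{u,v\}\subset I} w(u\sim v),
\]
combined with the inductive hypothesis, isolates $w(i\sim j)$ and forces it to vanish. The base case $d=1$ is the same identity applied to $I=\{i,j\}$. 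The main obstacle — the telescoping identity of the third step — is routine; everything else is a mechanical unwinding of the definitions of $\sigma_I(\vec{d})$ and $Q_f$.
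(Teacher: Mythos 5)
Your proof is correct and follows essentially the same route as the paper: defining edge weights on the refined $N$-gon by $-q_f$ of the residue of the index difference, summing over arcs to get $w_\sigma$, using $\sum_{k\in\ZZ_m}q_f(k)=0$ to identify all flows with values of $Q_f$ at the counting vectors $\sigma_I(\vec{d})$, and deducing uniqueness from the flows across $\sigma$-contiguous partitions (including singletons). You merely spell out a few steps the paper leaves implicit — independence of the choice of lifts, the telescoping identity $Q_f\bigl((1)_r,(0)_{m-r}\bigr)=f(r)-f(0)$, and the induction on cyclic distance for uniqueness.
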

\begin{proof}
It is clear that specifying the flow across all $\sigma$-contiguous partitions determines $w_\sigma$
uniquely. For example, if $i$ and $j$ are adjacent in $P_n$, then we must have
\[
w_{\sigma}(i \sim j)=\Bigl(w_\sigma(i)+w_\sigma(j)-w_\sigma(\{i,j\} \mid [n]- \{i,j\})\Bigr)/2,
\]
and the weights of all the remaining edges are determined similarly. 

It remains to prove the existence of the requisite $w_\sigma$.
To begin, as in Definition \ref{D:sigma-vector} (1), replace each $d_i$ by a positive integer representative of 
the residue and set $N=\sum_{i=1}^n d_i$. We have $m \mid N$. 
Consider the regular $N$-gon $P_N$ and a correspondence
between $[n]$ and the vertices of $P_N$ as in Definition \ref{D:sigma-vector} (2):
The vertex $i\in [n]$ corresponds to $d_i$ adjacent vertices $S_i\subset P_{N}$ 
in such a way that $\cup_{i=1}^n S_i = P_{N}$ 
and $S_i$'s occur in the clockwise order in $P_{N}$ given by $\sigma$. 
Let $q_f\co \ZZ_m \to \QQ$ be as in \eqref{E:2nd-diff}.
We define the weight function $\widetilde{w}$ on $P_N$ by 
\[
\widetilde{w}(i\sim j):=-q_f(i-j)=\frac{1}{2}\bigl(2f(i-j)-f(i-j-1)-f(i-j+1)\bigr). 
\]
Since $f$ is a function on $\ZZ_m$, $\widetilde{w}$ is invariant under rotations of $P_N$ 
by the angle $2\pi/m$. In particular, 
\begin{itemize}
\item The weight $\widetilde{w}(i\sim j)$ depends only on $i-j \pmod{m}$.
\item All vertices have the same $\widetilde{w}$-flow.
\item The $\widetilde{w}$-flow across a contiguous partition $A\sqcup B$ of $P_N$ depends only on $\vert A\vert$. 
 \end{itemize}
We now compute the $\widetilde{w}$-flow through the vertex $1\in P_N$:
\begin{multline*}
\widetilde{w}(1)=\sum_{j=2}^{n} \widetilde{w}(1\sim j)=-\sum_{j=2}^n q_f(j-1)  =\frac{1}{2}\sum_{j=2}^n \bigl(2f(j-1)-f(j)-f(j-2)\bigr) \\ =f(1)-f(0)=Q_f(1,0,\dots,0). 
\end{multline*}

Suppose that $A \sqcup B$ is an arbitrary partition of $P_{N}$. 
For $k=0,\dots, m-1$, let $z_j$ be the number of vertices in $A$ whose index is congruent to $j \pmod{m}$.
From the definition of $\widetilde{w}$, we see that 
\begin{multline}\label{E:flow-across}
\widetilde{w}(A\mid B)=-\sum_{0\leq i, \, j\leq m-1} q_f(i-j) z_i (m-z_j) \\ =-m\left(\sum_{i=0}^{m-1}z_i \right)\left(\sum_{k=0}^{m-1} q_f(k)\right)
+\sum_{0\leq i, \, j\leq m-1} q_f(i-j) z_i z_j  =Q_f(z_0,\dots, z_{m-1}).
\end{multline}

It remains to observe that the weighting $\widetilde{w}$ on $P_{N}$
induces a weighting on $\Gamma([n])$
that satisfies the conditions of the lemma. 
Namely, 
 we define the $w_{\sigma}$-weight of $(i\sim j)$ in $E([n])$ to be the 
sum of $\widetilde{w}$-weights of the edges in $E(P_{N})$ joining $S_i$ and $S_j$:
\[
w_{\sigma}(i\sim j):= \sum_{a\in S_i, b\in S_j} \widetilde{w}(a\sim b).
\]
It follows from \eqref{E:flow-across} and Definition \ref{D:sigma-vector} that 
for every partition $I\sqcup J=[n]$,
we have
\[
 w_\sigma(I\mid J)=Q_f(\sigma_I(\vec{d})).
 \]
Since $Q_f(\sigma_I(\vec{d}))=f(\vec{d}(I))-f(0)$ if $I\sqcup J$ is $\sigma$-contiguous, we are done. 
\end{proof}
\begin{remark}
That flows across all $\sigma$-contiguous partitions determines $w_\sigma$
uniquely is equivalent to the fact that the collection $\mathcal{A}_\sigma$ of divisors defined in 
Remark \ref{R:nonadjacent} forms a basis of $\Pic(\M_{0,n})$. This follows immediately from 
the Effective Boundary Lemma \ref{L:main}. In view of this, the main contribution of Lemma \ref{L:cyclic} 
is the computation of $w_\sigma$-flows across \emph{all} partitions of $[n]$ in terms of the values 
of the quadratic form $Q_f$. 
\end{remark}

We are now ready to establish the equivalence between 
cyclic effectivity of $f\co \ZZ_m \to \QQ$ and the balancedness of $Q_f$.
\begin{proof}[Proof of Proposition \ref{P:cyclic=balanced}]
Suppose $f\co \ZZ_m \to \QQ$ is a symmetric function with $f(0)=0$.
Let $\vec{d}=(d_1,\dots,d_n)$ be a $\ZZ_m$-$n$-tuple,
where we assume each $d_i$ to be a positive integer.
By definition, $f$ is cyclically effective with respect to 
$\vec{d}$ if and only if for every cyclic ordering $\sigma\co [n]\to P_n$, the weighting 
$w_\sigma$ constructed in Lemma \ref{L:cyclic}
satisfies 
\begin{equation}\label{E:cyclic-balanced}
w_\sigma(I\mid J) \geq f(\vec{d}(I))
\end{equation} for every partition $I\sqcup J=[n]$. (Note that the equality 
is already satisfied for all $\sigma$-contiguous partitions by the construction of $w_\sigma$.)
But by Lemma \ref{L:cyclic}, we have 
\[
w_\sigma(I\mid J)=Q_f(\sigma_{I}(\vec{d})),
\]
where $\sigma_{I}(\vec{d})=(z_0,\dots, z_{m-1})\in \ZZ^{m}$ is a vector such that
\[
z_0+\cdots+z_{m-1}=\vec{d}(I).
\]
Suppose $\vec{d}(I)=mc+r$. Then 
\[
Q_f\bigl((1+c)_{r}, (c)_{m-r}\bigr)=f(r)=f(\vec{d}(I)).
\]
We conclude that \eqref{E:cyclic-balanced} holds for all $\vec{d}$ if and only if
\[
Q_f(z_0,\dots,z_{m-1}) \geq Q_f\bigl((1+c)_{r}, (c)_{m-r}\bigr)
\]
for all $(z_0,\dots,z_{m-1})\in \ZZ_{>0}^{m}$ satisfying $\sum_{i=0}^{m-1} z_i=\vec{d}(I)$. 
However, this is precisely the condition for $Q_f$ to be balanced.\footnote{We have used Remark \ref{R:finite-conditions} to restrict to positive $z_i$'s here.} 
This finishes the proof of the first part. 

The second part follows by the same argument after
minor modifications. Namely, we observe that the $\ZZ_m$-$n$-tuples $\vec{d}$
that need to be considered when verifying weak cyclic effectivity give rise
to the vectors $\sigma_I(\vec{d})$ whose every coordinate is either $0$ or $1$.  Hence
weak balancedness of $Q_f$ is sufficient to conclude.
\end{proof}

\begin{theorem}\label{T:balanced}
Suppose $f\co \ZZ_m \ra \ZZ$ is a symmetric function with $f(0)=0$ such that the associated 
quadratic form $Q_f$ is balanced. Then 
\[
\DD\bigl(\ZZ_m, f; (d_1,\dots, d_n)\bigr)
\]
is boundary semiample on $\M_{0,n}$ for all $\ZZ_m$-$n$-tuples $(d_1,\dots, d_n)$. 
\end{theorem}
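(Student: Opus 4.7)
The plan is to deduce the theorem by chaining together three results that have already been established in the excerpt, so the ``proof'' is really a sequence of invocations with essentially no new content. First I would apply Proposition \ref{P:cyclic=balanced}(1): since $f$ is symmetric with $f(0)=0$ and its associated quadratic form $Q_f$ is balanced, the proposition immediately gives that $f$ is cyclically effective in the sense of Definition \ref{D:cyclically-semiample}.

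Next I would pass from cyclic effectivity to tree-effectivity via Remark \ref{R:implications} (equivalently, the first implication in Lemma \ref{L:implication-1}), which asserts that a cyclically effective symmetric function is automatically tree-effective: given an $[n]$-labeled tree $T$ and any $\ZZ_m$-$n$-tuple $\vec{d}$, one embeds $T$ in the plane with its leaves on the vertices of $P_n$, obtains a cyclic ordering $\sigma$ for which every $T$-partition is $\sigma$-contiguous, and inherits the weighting witnessing $T$-effectivity from the one witnessing cyclic effectivity with respect to $\sigma$.

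Finally I would invoke Theorem \ref{P:semiample}: tree-effectivity of $f$ implies that, for every $\ZZ_m$-$n$-tuple $(d_1,\dots,d_n)$, the parasymmetric line bundle $\DD(\ZZ_m, f; (d_1,\dots,d_n))$ is boundary semiample on $\M_{0,n}$. This is exactly the conclusion sought.

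Since each step is a direct citation, there is no genuine obstacle to overcome; the only thing worth remarking is that the hypothesis $f(0)=0$ is needed precisely to be able to apply Proposition \ref{P:cyclic=balanced}, and this is a harmless normalization because adding a constant to $f$ changes $\DD(\ZZ_m, f; \vec{d})$ only by a linear combination of the relations \eqref{E:relation} (it shifts every $\psi_i$ coefficient and every $\Delta_{I,J}$ coefficient by the same constant), so it does not affect the line bundle class and hence not its semiampleness. Thus the proof reduces to writing: ``By Proposition \ref{P:cyclic=balanced}, $f$ is cyclically effective; by Remark \ref{R:implications}, $f$ is tree-effective; and the conclusion now follows from Theorem \ref{P:semiample}.''
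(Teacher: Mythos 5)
Your main argument is correct and is exactly the paper's proof: Proposition \ref{P:cyclic=balanced}(1) gives cyclic effectivity, Remark \ref{R:implications} upgrades this to tree-effectivity, and Theorem \ref{P:semiample} concludes. One caution about your closing aside: the claim that adding a constant $c$ to $f$ leaves the class of $\DD(\ZZ_m,f;\vec{d})$ unchanged is false, since it adds $c\bigl(\sum_i\psi_i-\sum_{I,J}\Delta_{I,J}\bigr)$, and summing the relations \eqref{E:relation} gives $\sum_i\psi_i=\tfrac{1}{n-1}\sum_{I,J}|I||J|\Delta_{I,J}$, so this difference is a nonzero class in general; fortunately the remark is unnecessary because $f(0)=0$ is a hypothesis of the theorem rather than a normalization you must justify.
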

\begin{proof}
We have that $f$ is cyclically effective by Proposition 
\ref{P:cyclic=balanced}(1). The result follows by Theorem \ref{P:semiample}.
\end{proof}

\begin{example}[Type A level 1 conformal blocks divisors]
\label{E:typeA}
The quadratic form associated to the standard function $A_m$ (see Example \ref{E:standard}) is 
\[
Q_{A_m}(x_0,\dots,x_{m-1})=m(x_0^2+x_1^2+\cdots+x_{m-1}^2)-\left(\sum_{i=0}^{m-1} x_i\right)^2.
\]
Note that $Q_{A_m}$ is balanced because $x_0^2+x_1^2+\cdots+x_{m-1}^2$ is balanced. 
Every $\sl_m$ level $1$ conformal blocks divisor (see \cite{fakh} definition, and \cite{gian,gian-gib} for 
GIT interpretation and other wonderful properties of these divisors)
is a multiple of $\DD\bigl(\ZZ_m, A_m; (d_1,\dots, d_n)\bigr)$ for some $d_1,\dots, d_n \in \ZZ_m$ 
such that $m\mid \sum_{i=1}^{n} d_i$ by \cite[Theorem A and Proposition 4.8]{fedorchuk-cyclic}. 
Thus Theorem \ref{T:balanced} gives a new proof of semiampleness for these divisors, 
valid over $\spec \ZZ$.
\end{example}

\subsection{The cyclic semiample cone of $\M_{0,n}$}
The condition for a quadratic form to be balanced is quite stringent and checking whether a given quadratic form is balanced can be quite difficult. On the other hand, weak balancedness of a quadratic form
can be easily checked. Thus we single out those symmetric divisors on $\M_{0,n}$
whose associated quadratic form is weakly balanced, in the process obtaining  
a simple sufficient criterion for a symmetric divisor on $\M_{0,n}$ to be semiample. 

Suppose $D=\sum_{i=2}^{\lfloor n/2\rfloor} a_i\Delta_i$ is a symmetric divisor on $\M_{0,n}$.
Set $a_0=a_1=0$ and $a_{i}=a_{n-i}$ for $\lfloor n/2\rfloor+1\leq i \leq n-1$.
Define a function $q_D\co \ZZ_n \ra \QQ$ by 
\[
q_D(i):=\frac{1}{2}\bigl(2a_i-a_{i-1}-a_{i+1}\bigr).
\]
and a quadratic form
\begin{equation}\label{E:quadratic}
Q_D(x_0,\dots, x_{n-1}):= \sum_{0\leq i,\, j\leq n-1} q_D(i-j) x_ix_j.
\end{equation}
We call $Q_D$ \emph{the associated quadratic form of $D$}. 

\begin{theorem}[Cyclic Semiampleness Criterion]\label{T:semiample} 
Let $D=\sum_{i=2}^{\lfloor n/2\rfloor} a_i\Delta_i$ be a symmetric divisor on $\M_{0,n}$.
Suppose that for every $S\subset \{0,\dots, n-1\}$,
the following inequality holds: 
\begin{equation}\label{E:weakly}
2a_{\vert S\vert} \geq  \sum_{i,\, j\in S} \bigl(2a_{i-j}-a_{i-j+1}-a_{i-j-1}\bigr),
\end{equation} 
where as before $a_0=a_1=0$ and $a_i=a_{n-i}$.
Then $D$ is semiample. Moreover, there exists a base point free linear subsystem of $\vert 2D\vert$ 
of dimension at most $(2n-5)!!$ 
\end{theorem}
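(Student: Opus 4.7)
The plan is to deduce Theorem~\ref{T:semiample} from Proposition~\ref{P:cyclic=balanced}(2) together with Theorem~\ref{P:weakly-semiample}, supplemented by an explicit stratum-by-stratum construction for the dimension bound. The first step is to recognize the hypothesis \eqref{E:weakly} as the weak balancedness of the quadratic form $Q_D$ associated to $D$. Writing $D = \DD(\ZZ_n, p_D; (1)_n)$ with $p_D$ as in \eqref{E:pd-function}, a short direct computation gives $q_{p_D}(a) = q_D(a)$ for every $a \in \ZZ_n$, so the form $Q_{p_D}$ defined by \eqref{E:quadratic} coincides with the $Q_D$ of the theorem. Applying Lemma~\ref{L:weakly-balanced} to $f = p_D$ (for which $f(0) = 0$) then matches \eqref{E:weakly} with the weak balancedness of $Q_D$.

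With the hypothesis reinterpreted, I would invoke Proposition~\ref{P:cyclic=balanced}(2) to conclude that $p_D$ is weakly cyclically effective, and Remark~\ref{R:weak-implications} to upgrade this to weakly tree-effective. Then Theorem~\ref{P:weakly-semiample} applied with $f = p_D$ and $\vec{d} = (1)_n$ yields that $D = \DD(\ZZ_n, p_D; (1)_n)$ is boundary semiample on $\M_{0,n}$, and in particular semiample.

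For the quantitative claim about a base point free linear subsystem of $|2D|$, I would produce one representative for each $0$-dimensional boundary stratum. For each trivalent $[n]$-labeled tree $\tau$, fix a planar embedding of $\tau$ whose leaves are identified with the vertices of $P_n$; this induces a cyclic ordering $\sigma$ in which every $\tau$-partition is $\sigma$-contiguous. The cyclic weighting $w_\sigma$ produced by Lemma~\ref{L:cyclic} (with $f = p_D$ and $\vec{d} = (1)_n$) takes values in $\tfrac{1}{2}\ZZ$ because $p_D$ is integer-valued. Its flow across every $\sigma$-contiguous partition equals $p_D(|I|) = -a_{|I|}$, and across every other partition equals $Q_D(\sigma_I((1)_n)) \geq -a_{|I|}$ by weak balancedness. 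The Effective Boundary Lemma~\ref{L:main} then produces an effective boundary $\QQ$-divisor $D_\tau \sim_{\QQ} D$ whose coefficient on every $\sigma$-contiguous partition vanishes, so $B_\tau \notin \Supp(D_\tau)$, where $B_\tau \in B(n)$ denotes the stratum associated to $\tau$. Since $w_\sigma$ is $\tfrac{1}{2}\ZZ$-valued, $2 D_\tau$ is an effective integer boundary divisor in $|2D|$, and applying Lemma~\ref{L:semiample} to the collection $\{2 D_\tau : \tau \in B(n)\}$ yields the desired base point free linear subsystem of $|2D|$ of dimension at most $|B(n)| = (2n-5)!!$.

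The principal obstacle I anticipate is the sign-tracking in the identification step: one must carefully verify that the inequality \eqref{E:weakly}, after noting that its right-hand side equals $2\, Q_D(\chi_S)$, matches the weak balancedness condition of Lemma~\ref{L:weakly-balanced} applied to $f = p_D$. Once this identification is in hand, the remainder is a streamlined application of the cyclic effectivity framework developed in Sections~\ref{S:nefness-bpf}--\ref{S:cyclic}.
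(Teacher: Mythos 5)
Your proof follows the paper's argument essentially verbatim: identify \eqref{E:weakly} with weak balancedness of $Q_D=Q_{p_D}$ via Lemma \ref{L:weakly-balanced}, pass through Proposition \ref{P:cyclic=balanced}(2) and Theorem \ref{P:weakly-semiample}, and extract the $(2n-5)!!$ bound from the integrality properties of the cyclic weightings of Lemma \ref{L:cyclic}. The only (immaterial) difference is that the paper works with $f=2p_D$ so that the cyclic weighting is $\ZZ$-valued from the outset, whereas you work with $p_D$, observe that the weighting is $\tfrac{1}{2}\ZZ$-valued, and double only at the end.
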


\begin{proof}
By Lemma \ref{L:weakly-balanced} applied to the symmetric function $p_D\co \ZZ_n \ra \ZZ$ 
defined in \eqref{E:pd-function},
the associated quadratic form $Q_D=Q_{p_D}$ is weakly balanced.  
Proposition \ref{P:cyclic=balanced}(2) now shows that $f:=2p_{D}$  is a 
weakly cyclically effective function, with values in even integers. 
Since
\[
\DD\bigl(\ZZ_n, f; (1,\dots, 1)\bigr)=2D,
\]
the semiampleness follows at once from Theorem \ref{P:weakly-semiample}.
The claim about base point free linear system follows once we observe that
because the cyclic weighting $w_\sigma$ constructed in Lemma \ref{L:cyclic} is
integer-valued when the function $f$ takes values in even integers,
we actually obtain $\ZZ$-linear equivalence in Lemma \ref{L:T-effective}(2).
\end{proof}

\begin{definition}\label{D:cyclic-semiample}
We say that the symmetric divisor $D$ on $\M_{0,n}$ is \emph{cyclic semiample} if 
$D$ satisfies the criterion of Theorem \ref{T:semiample}. Clearly, 
cyclic semiample divisors form a finite polyhedral subcone in $\NS(\M_{0,n})^{\Sm_n}$,
which we call \emph{the cyclic semiample cone}. 
\end{definition}

\begin{prop}
The cyclic semiample cone is a full-dimensional subcone in $\NS(\M_{0,n})^{\Sm_n}$
\end{prop}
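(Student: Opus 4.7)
The plan is to produce a divisor $D^* \in \NS(\M_{0,n})^{\Sm_n}$ satisfying strictly every defining inequality of Theorem~\ref{T:semiample}, so that $D^*$ lies in the Euclidean interior of the cyclic semiample cone $C$; because $C$ is a closed convex polyhedral cone cut out of $\NS(\M_{0,n})^{\Sm_n}$ by finitely many linear inequalities, producing such a $D^*$ automatically yields an open neighborhood inside $C$ and proves full-dimensionality.

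The construction of $D^*$ exploits the cyclically effective functions furnished earlier in the paper. By Proposition~\ref{P:cyclic=balanced} and Theorem~\ref{T:balanced}, each symmetric function $f \colon \ZZ_n \to \QQ$ whose associated cyclic quadratic form $Q_f$ is balanced gives a symmetric divisor $\DD(\ZZ_n, f; (1)_n) \in C$. Concrete families of such balanced forms are supplied by Example~\ref{E:standard} (the standard function $A_n$), Example~\ref{example-E} (the function $E_n$), Lemma~\ref{quadratic-B} (the form $B$), together with forms of the shape $\sum_i (x_i - x_{i+k})^2$ for varying $k \in \{1, \dots, \lfloor n/2 \rfloor\}$. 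After normalizing each such form to lie in the image of the parameterization $D \mapsto Q_D$---that is, adjusting by suitable multiples of $\sum x_i^2$ and $(\sum x_i)^2$ to enforce $q(0)=0$ and $\sum_k q(k) = 0$---one obtains a collection of cyclic semiample symmetric divisors. The added corrections do not disturb the strict balancedness property on indicator vectors, since on a $\{0,1\}$-vector of weight $r$ these corrections contribute only a constant depending on $r$.

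I would then take $D^*$ to be a generic positive rational combination
\[
D^* := \sum_i c_i \, \DD\bigl(\ZZ_n, f_i; (1)_n\bigr), \qquad c_i > 0,
\]
over a family of balanced forms $\{Q_{f_i}\}$ whose corresponding symmetric divisors span $\NS(\M_{0,n})^{\Sm_n}$. Each summand lies in the convex cone $C$, so $D^*$ does as well. Since the coefficients $c_i$ are chosen generically, $D^*$ lies on no proper facet of $C$, so the inequality $\phi_S(D^*) > 0$ is strict for every non-trivial $S$. Consequently $D^*$ is in the relative interior of $C$, and an open Euclidean neighborhood of $D^*$ in $\NS(\M_{0,n})^{\Sm_n}$ is contained in $C$.

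The principal obstacle is the explicit verification that the chosen family of balanced forms produces divisors spanning $\NS(\M_{0,n})^{\Sm_n}$, which has dimension $\lfloor n/2 \rfloor - 1$. This reduces to a direct linear-algebra computation: one compares the coefficients of each divisor $\DD(\ZZ_n, f_i; (1)_n)$ against the basis $\Delta_2, \dots, \Delta_{\lfloor n/2 \rfloor}$ and checks linear independence. The normalized forms derived from $\sum_i (x_i - x_{i+k})^2$ for $k = 1, 2, \dots, \lfloor n/2 \rfloor$ provide a particularly tractable candidate spanning set, and a Fourier-theoretic computation on $\ZZ_n$ gives the required independence of the resulting symmetric divisors.
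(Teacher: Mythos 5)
Your overall strategy---exhibit divisors in the cyclic semiample cone that span $\NS(\M_{0,n})^{\Sm_n}$, so that the cone, being convex, is full-dimensional---is sound in principle, but the two pillars it rests on are both unestablished, and one of them fails as stated. First, the family $\sum_i (x_i - x_{i+k})^2$, $k=1,\dots,\lfloor n/2\rfloor$, which you propose as the "tractable spanning set," is not weakly balanced in general. For example, take $n=6$ and $k=2$: the form $\sum_{i}(x_i-x_{i+2})^2$ evaluated at the indicator vector of $S=\{0,2,4\}$ gives $0$, whereas at the contiguous set $\{0,1,2\}$ it gives $4$; so Condition $(3)$ restricted to $\{0,1\}$-vectors is violated and the associated divisor does not lie in the cyclic semiample cone. (More generally, whenever $k\mid n$ the form vanishes on the $k$-periodic indicator vector.) Second, the spanning/linear-independence verification---the part you yourself identify as "the principal obstacle"---is deferred to an unexecuted Fourier computation, so even granting balancedness nothing is actually proved. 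I would also note that your intermediate step "generic $c_i$ $\Rightarrow$ $D^*$ lies on no proper facet $\Rightarrow$ all inequalities strict $\Rightarrow$ interior point" tacitly presupposes the full-dimensionality you are trying to prove; the clean implication is simply that a convex cone containing a basis of the ambient space is full-dimensional, so the genericity discussion is superfluous once spanning is known.

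The paper's own proof is a one-liner that avoids all of this: it exhibits the single ample divisor $\psi-\Delta=\sum_{i=2}^{\lfloor n/2\rfloor}\frac{i(n-i)-(n-1)}{n-1}\Delta_i$ and observes that its coefficients satisfy every inequality \eqref{E:weakly} \emph{strictly}. Since the cyclic semiample cone is cut out of $\NS(\M_{0,n})^{\Sm_n}$ by finitely many linear inequalities, a point satisfying all of them strictly has an open neighborhood inside the cone, which gives full-dimensionality immediately. If you want to salvage your route, the cleanest fix is to replace the ad hoc family of quadratic forms by a single strictly interior point of this kind, which is exactly the first sentence of your own plan before the construction goes astray.
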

\begin{proof}
Consider the following expression for a well-known ample divisor $\psi-\Delta$ on $\M_{0,n}$:
\[
\psi-\Delta=\sum_{i=2}^{\lfloor n/2\rfloor} \frac{i(n-i)-(n-1)}{n-1} \Delta_i.
\]
The coefficients of $\Delta_i$'s above \emph{strictly} satisfy Inequalities \eqref{E:weakly}. 
It follows that Theorem \ref{T:semiample} gives a
full-dimensional subcone in $\NS(\M_{0,n})^{\Sm_n}$ consisting of semiample divisors.
\end{proof}

\begin{remark}
\label{R:level-one}
 By Example \ref{E:typeA}, all symmetric $\sl_n$ level $1$ conformal blocks divisors 
lie in the cyclic semiample cone of $\M_{0,n}$. These $\nhalf$ divisors are known to be extremal rays
of the nef cone by \cite[Theorem 1.2]{agss}. It follows from \cite[Corollary 1.3]{kazanova}
that all $\Sm_n$-invariant type A conformal blocks vector bundles of rank $1$ on $\M_{0,n}$ lie in the 
cyclic semiample cone. 
Computer computations show that the 
cyclic semiample cone shares many more than $\nhalf$ extremal rays with the symmetric F-cone of $\M_{0,n}$. 
For example, when $n=20$, there are $739$ extremal rays of the symmetric F-cone and 
$60$ of them are semiample by Theorem \ref{T:semiample}. 
\end{remark}

\subsection{Geometric interpretation of the cyclic semiample cone}
\label{S:geometric} Let $R$ be $\ZZ$ or $\QQ$.
Suppose $X$ is a smooth projective variety with a finitely generated Picard group. 
We say that a collection of effective divisors $\B=\{D_1,\dots, D_\rho\}$ on $X$ is 
an \emph{$R$-basis} if the classes of $D_1,\dots, D_\rho$ 
form a basis of $\Pic(X)\otimes R$. 

Suppose $\B=\{D_1,\dots, D_\rho\}$ is an $R$-basis. Define \emph{the complement of $\B$}
to be the following open subset of $X$:
\[
U(\B):=X\setminus \bigcup_{i=1}^{\rho} D_i.
\]

Every $R$-basis $\B$ defines $\rho$ $R$-linear functions $c^\B_i\co \Pic(X)\otimes R \to R$, 
where $c^\B_i(L)$ for $L\in \Pic(X)\otimes R$ are uniquely defined by 
\[
L=\cO\left(\sum_{i=1}^\rho c^\B_iD_i\right).
\]
Let $E(\B)\subset \Pic(X)\otimes R$ be the convex cone of those line bundles whose coefficients
in the basis $\B$ are all non-negative. Namely, 
\[
E(\B):=\left\{L\in \Pic(X)\otimes R \mid c^\B_i(L)\geq 0\right\}.
\]

\begin{definition}\label{D:easy} We say that a collection of $R$-bases $\B_1, \dots, \B_N$ is 
\emph{exhaustive} if
\begin{equation}\label{E:empty-intersection}
U(\B_1)\cup U(\B_2)\cup \cdots \cup U(\B_N)=X.
\end{equation}
Given an exhaustive collection $\B_1, \dots, \B_N$, we define 
\[
\Nef(\B_1, \dots, \B_N):=E(\B_1)\cap E(\B_2)\cap \cdots \cap E(\B_N).
\]
\end{definition}

\begin{lemma}\label{L:exhaustive-nef}
 Suppose $\B_1, \dots, \B_N$ is an exhaustive collection.
Then $\Nef(\B_1,\dots,\B_N)\subset \Nef(X)$. Moreover, if $\Pic(X)$ is
in addition torsion-free, then $\Nef(\B_1,\dots,\B_N)\subset \Semiample(X)$.
\end{lemma}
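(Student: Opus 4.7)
The plan is to unpack what membership in $\Nef(\B_1,\dots,\B_N)$ gives us: it produces, for each $i$, an effective $R$-divisor representative of $L$ whose support avoids $U(\B_i)$. Combined with the exhaustivity hypothesis, which guarantees that every point (and hence every curve) meets at least one $U(\B_i)$, this will deliver both nefness and, under the torsion-free hypothesis, base point freeness of some multiple of $L$.

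First, I would fix a line bundle $L$ whose class lies in $\Nef(\B_1,\dots,\B_N)$, and for each $i$ set
\[
D^{(i)}:=\sum_{j=1}^{\rho} c^{\B_i}_j(L)\, D_{i,j}.
\]
Since $L\in E(\B_i)$, every coefficient $c^{\B_i}_j(L)$ is non-negative, so $D^{(i)}$ is an effective $R$-divisor with $L \sim_R D^{(i)}$, and by construction $\Supp(D^{(i)}) \subset X\setminus U(\B_i)$.

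For the nefness assertion, I would take an arbitrary irreducible curve $C\subset X$. By the exhaustivity condition \eqref{E:empty-intersection}, there exists some index $i$ with $C\cap U(\B_i)\neq \emptyset$; in other words $C\not\subset \Supp(D^{(i)})$. Since $D^{(i)}$ is an effective $R$-divisor in the class of $L$, we conclude $L\cdot C = D^{(i)}\cdot C \geq 0$. As $C$ was arbitrary, $L$ is nef.

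For the semiampleness upgrade, assume $\Pic(X)$ is torsion-free, so that $\Pic(X)$ injects into $\Pic(X)\otimes \QQ$ and $R$-linear equivalences among the $D^{(i)}$ reflect genuine linear equivalences of line bundles after clearing denominators. Choose a positive integer $m$ with $m\cdot c^{\B_i}_j(L)\in\ZZ$ for all $i,j$ (taking $m=1$ when $R=\ZZ$). Then $mD^{(i)}$ is an honest effective integral divisor with $mD^{(i)}\in |mL|$. Given any point $x\in X$, exhaustivity supplies an index $i$ with $x\in U(\B_i)$, and then $x\notin \Supp(mD^{(i)})$. Thus $|mL|$ is base point free and $L$ is semiample. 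The only real subtlety — hardly an obstacle — is precisely this bookkeeping: torsion-freeness is what lets an $R$-linear identity of classes be promoted to an actual section of $|mL|$, whereas the nefness half of the statement is purely numerical and needs no such hypothesis.
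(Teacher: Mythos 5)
Your argument is correct and is exactly the unpacking of the paper's own (one-line) proof: effective representatives supported away from each $U(\B_i)$, plus exhaustivity, give nefness for every curve and base point freeness of a suitable multiple under torsion-freeness. No issues.
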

\begin{proof} This is obvious. By definition, $L\in E(\B_i)$ if and only if $L$ can be represented
modulo torsion by an effective divisor with support in $\B_i$. Condition \eqref{E:empty-intersection}
says that the supports of $\B_i's$ do not intersect.
\end{proof}

The following observation is known to expert (we learned about it from Jenia Tevelev): 
\begin{lemma}\label{L:non-adjacent-exhaustive} 
The collection of non-adjacent bases on $\M_{0,n}$ (see Remark \ref{R:nonadjacent}) taken over all cyclic orderings of $[n]$ is exhaustive.
\end{lemma}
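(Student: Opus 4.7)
The plan is to reduce the claim to the following purely combinatorial statement: for every $[n]$-labeled tree $T$ there exists a cyclic ordering $\sigma \co [n] \to P_n$ such that every $T$-partition of $[n]$ is $\sigma$-contiguous. This is precisely the observation already used in the proof of Lemma \ref{L:implication-1}: embed $T$ in the plane (any tree admits such an embedding), read off the cyclic order in which the leaves appear along the boundary of a small neighborhood of $T$, and note that deleting any internal edge of $T$ splits the leaves into two sets that are each contiguous with respect to this cyclic order.

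Given this, the argument proceeds as follows. Fix $[C]\in \M_{0,n}$ and let $T=G(C)$ be its dual graph, which is an $[n]$-labeled tree (\S\ref{S:boundary}). Recall from the same subsection that $[C]\in \Delta_{I,J}$ if and only if $I\sqcup J$ is a $T$-partition. Choose a cyclic ordering $\sigma$ as above, i.e.\ one for which every $T$-partition is $\sigma$-contiguous. Then every boundary divisor containing $[C]$ is of the form $\Delta_{I,J}$ with $I\sqcup J$ being $\sigma$-contiguous; by the definition of $\mathcal{A}_\sigma$ in Remark \ref{R:nonadjacent}, no such $\Delta_{I,J}$ belongs to $\mathcal{A}_\sigma$. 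Therefore $[C]$ avoids every element of $\mathcal{A}_\sigma$, i.e.\ $[C]\in U(\mathcal{A}_\sigma)$. Since $[C]$ was arbitrary, the union of $U(\mathcal{A}_\sigma)$ over all cyclic orderings $\sigma$ covers $\M_{0,n}$, proving exhaustiveness.

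There is essentially no obstacle here: the combinatorial fact about planar embeddings of trees is elementary, and the translation between boundary incidence and dual graph partitions is standard. The one small point to be careful about is that the cyclic ordering $\sigma$ obtained from a planar embedding of $T$ depends only on the planar isotopy class of the embedding, but any choice of planar embedding works; one does not need a canonical choice of $\sigma$, only its existence for each $[C]$.
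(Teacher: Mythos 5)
Your proof is correct and follows essentially the same route as the paper's: both reduce to the observation (already used in Lemma \ref{L:implication-1}) that a planar embedding of the dual tree $T$ yields a cyclic ordering $\sigma$ for which every $T$-partition is $\sigma$-contiguous, so that no divisor of $\mathcal{A}_\sigma$ contains the given point. Your write-up is in fact slightly more explicit than the paper's, which only remarks that the generic point of $M_T$ avoids $\Supp(\mathcal{A}_\sigma)$, whereas you correctly note that the same holds for every point of $M_T$ via the equivalence between boundary incidence and $G(C)$-partitions.
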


\begin{proof}
This is just the reformulation of the implication 
\[
\text{cyclically effective} \Rightarrow \text{tree-effective}
\]
from Lemma \ref{L:implication-1}. Namely, if $T$ is an $[n]$-labeled tree, consider its embedding into the 
plane so that the leaves are identified with the vertices of $P_n$.
The resulting cyclic ordering $\sigma\co [n] \to P_n$ 
satisfies the property that every $T$-partition of $[n]$ is $\sigma$-contiguous. 
In particular, the generic point of $M_T$ is not in the support of the divisors in $\A_{\sigma}$. 
\end{proof}

\begin{lemma} The cyclic semiample cone of $\M_{0,n}$ is 
$\Easy(\{\A_\sigma \mid \text{$\sigma$ is a cyclic ordering of $[n]$}\})$.
\end{lemma}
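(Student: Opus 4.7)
The plan is to unwind the defining inequalities of the cyclic semiample cone through the chain of equivalences already established in the paper: weak balancedness of $Q_D$ $\Leftrightarrow$ weak cyclic effectivity of $p_D$ $\Leftrightarrow$ existence of a cyclic weighting for every cyclic ordering $\sigma$ $\Leftrightarrow$ representability of $D$ in each non-adjacent basis $\A_\sigma$ by an effective combination.

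First, I would fix notation: write $D=\sum_{i=2}^{\lfloor n/2\rfloor}a_i\Delta_i$ and recall that the inequality \eqref{E:weakly} appearing in Theorem \ref{T:semiample} is, by Lemma \ref{L:weakly-balanced} applied to the function $p_D$ of \eqref{E:pd-function}, exactly the statement that the associated quadratic form $Q_D=Q_{p_D}$ is weakly balanced. Hence $D$ lies in the cyclic semiample cone if and only if $Q_{p_D}$ is weakly balanced, which by Proposition \ref{P:cyclic=balanced}(2) is in turn equivalent to $p_D$ being weakly cyclically effective.

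Next, I would translate weak cyclic effectivity into the basis-representation language. By Definitions \ref{D:weakly-cyclically-semiample} and \ref{D:effective-with-respect}(3), $p_D$ is weakly cyclically effective precisely when, for every cyclic ordering $\sigma\co[n]\to P_n$, there exists a weighting $w_\sigma$ on $\Gamma([n])$ whose flow through each vertex matches the $\psi$-coefficients of $D$, whose flow across each partition dominates the corresponding boundary coefficient of $D$, and whose flow across every $\sigma$-contiguous partition equals that coefficient exactly. By the Effective Boundary Lemma \ref{L:main}, producing such a $w_\sigma$ is equivalent to writing $D\equiv\sum_{I,J} c_{I,J}\Delta_{I,J}$ with $c_{I,J}\geq 0$ and $c_{I,J}=0$ for every $\sigma$-contiguous partition $I\sqcup J=[n]$. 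Since the divisors indexed by the non-$\sigma$-contiguous partitions are exactly the elements of $\A_\sigma$, this says $D\in E(\A_\sigma)$.

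Combining the two reductions, $D$ is cyclic semiample if and only if $D\in E(\A_\sigma)$ for every cyclic ordering $\sigma$ of $[n]$, which is the definition of $\operatorname{Easy}(\{\A_\sigma\mid \sigma\})$. The only subtle step is keeping track that the weighting $w_\sigma$ produced by Lemma \ref{L:cyclic} is the \emph{unique} one certifying effectivity with equality on all $\sigma$-contiguous partitions, so that the Effective Boundary Lemma really sets up a bijection between such weightings and effective expressions in $\A_\sigma$; that uniqueness is exactly the content of the first part of the proof of Lemma \ref{L:cyclic} together with the observation in the subsequent remark that $\A_\sigma$ is a basis. Once this is in place, the two inclusions are immediate and the lemma follows.
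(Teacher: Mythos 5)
Your proof is correct and takes essentially the same route as the paper, whose entire proof is the one-line instruction to unwind Definitions \ref{D:cyclic-semiample} and \ref{D:easy}; you have simply carried out that unwinding explicitly via Lemma \ref{L:weakly-balanced}, Proposition \ref{P:cyclic=balanced}(2), Lemma \ref{L:cyclic}, and the Effective Boundary Lemma \ref{L:main}. The only imprecision is that you identify weak cyclic effectivity of $p_D$ (which quantifies over all $\ZZ_n$-$k$-tuples) with its instance for the tuple $(1)_n$; this is harmless here since both are equivalent to weak balancedness of $Q_{p_D}$, because the vectors $\sigma_I((1)_n)$ already exhaust all $0$-$1$ vectors as $\sigma$ and $I$ vary.
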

\begin{proof}
This follows by unwinding Definitions  \ref{D:cyclic-semiample} and \ref{D:easy}.
\end{proof}

\section{Tree-effectivity and another semiampleness criterion}
\label{S:2nd-semiampleness}
In the previous section, we have explored the strongest of the three effectivity conditions
on a symmetric function, namely cyclic effectivity. In particular, we obtained an ``if and only 
if'' characterization of cyclic effectivity of a function $f$ in terms of the balancedness
of the associated quadratic form $Q_f$. In this section, we turn our attention to the tree-effectivity,
but with more modest results. Namely, we prove a sufficient criterion for a function to be 
tree-effective (resp., weakly tree-effective). This leads to another sufficient
criterion for semiampleness of symmetric divisors on $\M_{0,n}$. On one hand,
this criterion allows to exhibit boundary semiample divisors on $\M_{0,n}$ that 
are not cyclic semiample; on the other hand,
this criterion is too stringent because many cyclically effective functions do not satisfy it,
whereas every cyclically effective function is tree-effective by Remark \ref{R:implications}.
Both of these phenomena are illustrated by Table \ref{table-experiments} in Section \ref{S:examples}.
\begin{prop}\label{P:tree-effective} 
\hfill
\begin{enumerate}
\item Suppose $f\co G \to \QQ_{\geq 0}$ is a subadditive symmetric function satisfying 
\begin{equation}\label{E:sufficient-strong}
\begin{aligned}
&{}\quad (f(b)-f(a)+f(a+b))f(A)\\ &+(f(a)-f(b)+f(a+b))f(B)\\ &+(f(a)+f(b)-f(a+b))f(a+b)\\ &\geq 2f(B+b)f(a+b).
\end{aligned}
\end{equation}
for all $a,b,A,B\in G$ such that $a+b+A+B=0\in G$.
Then $f$ is tree-effective. 

\item Suppose $f\co \ZZ_n \to \QQ_{\geq 0}$ is a subadditive symmetric function satisfying 
\eqref{E:sufficient-strong}
for all non-negative integers $a,b,A,B$ such that $a+b+A+B=n$.
Then $f$ is weakly tree-effective. 
\end{enumerate}
\end{prop}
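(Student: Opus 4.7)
The plan is to prove both parts by strong induction on $n$, showing that for every $[n]$-labeled binary tree $T$ and every $G$-$n$-tuple $\vec{d}$ (resp., in part (2), every $\ZZ_n$-$n$-tuple of non-negative integers summing to $n$), a weighting on $\Gamma([n])$ witnessing $T$-effectivity of $f$ exists. The remark following Definition \ref{D:strongly-semiample} reduces to binary $T$; the base case is handled by setting $w(i\sim j)=\tfrac{1}{2}(f(d_i)+f(d_j)-f(d_i+d_j))$, which meets the vertex conditions and for which the singleton $T$-partitions on the unique $[3]$-labeled binary tree are automatic.

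For the inductive step, choose two leaves $i,j$ of $T$ sharing a common internal node. Let $T'$ be the $[n-1]$-labeled binary tree obtained by collapsing them to a single new leaf $k$, with tuple $\vec{d}'$ given by $d_k:=d_i+d_j$. By induction there is a weighting $w'$ on $\Gamma([n-1])$ exhibiting $T'$-effectivity with respect to $\vec{d}'$. Writing $a:=d_i$ and $b:=d_j$, I define $w$ on $\Gamma([n])$ by transferring $w'$ on edges disjoint from $\{i,j\}$, by $w(i\sim j):=\tfrac{1}{2}(f(a)+f(b)-f(a+b))$, and by splitting
\[
w(i\sim\ell):=\alpha\,w'(k\sim\ell),\qquad w(j\sim\ell):=(1-\alpha)\,w'(k\sim\ell)\qquad(\ell\notin\{i,j\}),
\]
with $\alpha:=(f(a)-f(b)+f(a+b))/(2f(a+b))\in[0,1]$ by subadditivity (and $\alpha:=1/2$ if $f(a+b)=0$). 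The vertex conditions and the $T$-partitions $\{i\}|[n]\setminus\{i\}$ and $\{j\}|[n]\setminus\{j\}$ follow by direct summation. All other $T$-partitions and all non-$T$-partitions not separating $\{i,j\}$ transfer from the $T'$-conditions on $w'$, because the $w$-flow across such a partition equals the $w'$-flow across the corresponding partition of $[n-1]$ (the $\alpha$ and $1-\alpha$ contributions summing to $1$).

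The main obstacle is to verify $w(I|J)\geq f(\vec{d}(I))$ for partitions $I=\{i\}\cup I_0$, $J=\{j\}\cup J_0$ separating $i$ and $j$. Setting $A:=\vec{d}(I_0)$, $B:=\vec{d}(J_0)$, and regrouping using the identities $w'(I_0\cup\{k\}\mid J_0)=w'(k\mid J_0)+w'(I_0\mid J_0)$ and $w'(I_0\mid J_0\cup\{k\})=w'(k\mid I_0)+w'(I_0\mid J_0)$ gives
\[
w(I\mid J) = w(i\sim j) + \alpha\,w'(I_0\cup\{k\}\mid J_0) + (1-\alpha)\,w'(I_0\mid J_0\cup\{k\}).
\]
Invoking the inductive bounds $w'(I_0\cup\{k\}\mid J_0)\geq f(A+a+b)=f(B)$ (using $a+b+A+B=0$ and symmetry of $f$) and $w'(I_0\mid J_0\cup\{k\})\geq f(A)$, and then multiplying the target inequality $w(I\mid J)\geq f(a+A)=f(b+B)$ through by $2f(a+b)$, transforms it into precisely the hypothesis \eqref{E:sufficient-strong}.

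The degenerate case $f(a+b)=0$ forces $f(a)=f(b)$ and $f(A)=f(B)$ by subadditivity; averaging the two inductive bounds yields $w'(I_0\mid J_0)\geq(f(A)+f(B))/2=f(A)$, so $w(I\mid J)=f(a)+w'(I_0\mid J_0)\geq f(a)+f(A)\geq f(a+A)$. Part (2) follows verbatim, since cherry contraction preserves the property that the tuple consists of non-negative integers summing to $n$, and \eqref{E:sufficient-strong} is therefore invoked only for such $(a,b,A,B)$, matching the weaker hypothesis of part (2).
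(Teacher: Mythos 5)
Your proof is correct and follows essentially the same route as the paper's: induction on the number of leaves via contraction of a cherry, the same reweighting of the edges incident to the two cherry leaves by the factors $\alpha$ and $1-\alpha$ with $\alpha=(f(a)-f(b)+f(a+b))/(2f(a+b))$, and the same reduction of the flow bound across partitions separating the cherry to inequality \eqref{E:sufficient-strong}. Your explicit treatment of the degenerate case $f(a+b)=0$ (where the paper's formulas \eqref{E:new-weight} divide by $f(a+b)$) is a small but welcome addition that the paper's proof leaves implicit.
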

\begin{proof}
We begin by establishing the first part.

Let $\vec{d}=(d_1,\dots,d_n)$ be a $G$-$n$-tuple. 
We need to verify that $f$ is $T$-effective with respect to $\vec{d}$
for every $[n]$-labeled unrooted binary tree $T$. We proceed by induction on $n$ starting with the base
case $n=3$, which holds by Remark \ref{R:n=34}. 

Suppose $n\geq 4$. Take an internal node $p$ of $T$ from which two leaves emanate.
Without loss
of generality, these leaves are $n-1$ and $n$. 
Let $v=\overline{pq}$ be the internal edge of $T$ containing $p$. 
We define $T_0$ to be the $[n-1]$-labeled tree obtained 
by contracting the leaves $n-1$ and $n$ of $T$ into $p$, so that $p$ becomes the $(n-1)^{st}$ leaf of $T_0$. By our inductive assumption, $f$ is $T_0$-effective
with respect to $\vec{d}_0:=(d_1,\dots, d_{n-2}, d_{n-1}+d_{n})$. 
Equivalently, there exists a weighting $w_0$ on $\Gamma([n-1])$ such that 
\begin{enumerate}
\item $w_0(i)=f(d_i)$ for all $i=1,\dots, n-2$. 
\item $w_0(n-1)=f(d_{n-1}+d_{n})$.
\item $w_0(I\mid J)\geq f(\vec{d_0}(I))$ for all partitions $I\sqcup J=[n-1]$. 
\item $w_0(I\mid J)= f(\vec{d_0}(I))$ for all $T_0$-partitions $I\sqcup J=[n-1]$.
\end{enumerate}
Set $a:=d_{n-1}$ and $b:=d_n$,
We define a weighting $w$ on $\Gamma([n])$ as follows:
\begin{equation}\label{E:new-weight}
\begin{aligned}
w(i\sim j)&=w_0(i\sim j) \text{ if $i,j\in [n-2]$}, \\
w((n-1)\sim n)&=\frac{f(a)+f(b)-f(a+b)}{2},\\
w(n\sim i)&=\frac{f(a)-f(b)+f(a+b)}{2f(a+b)}w_0((n-1)\sim i), \text{ for $i\in [n-2]$},\\
w((n-1)\sim i)&=\frac{f(b)-f(a)+f(a+b)}{2f(a+b)}w_0((n-1)\sim i), \text{ for $i\in [n-2]$}. \\
\end{aligned}
\end{equation}

Clearly, the $w$-flow through the vertex $n$ of $T$ is 
\begin{align*}
w(n)&=\frac{f(a)-f(b)+f(a+b)}{2f(a+b)}w_{0}(n-1)+\frac{f(a)+f(b)-f(a+b)}{2} \\
&=\frac{f(a)-f(b)+f(a+b)}{2f(a+b)}f(a+b)+\frac{f(a)+f(b)-f(a+b)}{2}=f(a)=f(d_n),
\end{align*}
and the $w$-flow through the vertex $n-1$ of $T$ is 
\[
w(n-1)=\frac{f(b)-f(a)+f(a+b)}{2f(a+b)}w_{0}(n-1)+\frac{f(a)+f(b)-f(a+b)}{2}=f(d_{n-1}).
\]
We also have \[
w(i)=w_0(i)=f(d_i), \quad \text{ for all $i\in [n-2]$}.
\]
Thus $w$ satisfies Condition (a) of Definition \ref{D:effective-with-respect} (2).

Suppose $I\sqcup J=[n]$ is a partition such that $n-1$ and $n$ lie in the same subset,
say, $J$. Then from \eqref{E:new-weight}, we obtain $w(I\mid J)=w_0(I\mid [n-1]- I)$.
It follows from $T_0$-effectivity of $f$ that
\[
w(I\mid J)=w_0(I\mid [n-1]- I)\geq f(\vec{d}_0(I))=f(\vec{d}(I)),
\]
with the equality achieved when $I\sqcup J=[n]$ is a $T$-partition. Thus $w$ satisfies Condition (c) of Definition \ref{D:effective-with-respect} (2).

Consider now
a partition of $[n]$ given by $I=n\cup I'$ and $J=(n-1)\cup J'$,
where $I'\sqcup J'=[n-2]$. We set
\begin{align*}
A&:=\vec{d}(I')=\sum_{i\in I'} d_i, \\
B&:=\vec{d}(J')=\sum_{j\in J'} d_j.
\end{align*}
Then $a+b+A+B=0 \in G$. We proceed to estimate the $w$-flow across $I\sqcup J$,
which we compute using \eqref{E:new-weight}:

\begin{align*}
w(I\mid J)&=w((n-1)\sim n)+\sum_{i\in I', j \in J'} w(i\sim j)+\sum_{i\in I'} w(i\sim (n-1))+\sum_{j\in J'} w(j\sim n) \\
&=w((n-1)\sim n)+\sum_{i\in I', j \in J'} w_0(i\sim j) \\ &+\frac{f(b)-f(a)+f(a+b)}{2f(a+b)} \sum_{i\in I'} w_0(i\sim (n-1))+\frac{f(a)-f(b)+f(a+b)}{2f(a+b)}\sum_{j\in J'} w_0(j\sim n)\\
&=\frac{f(a)+f(b)-f(a+b)}{2}\\ &+\frac{f(b)-f(a)+f(a+b)}{2f(a+b)}w_0(I' \mid J'\cup \{n-1\})+\frac{f(a)-f(b)+f(a+b)}{2f(a+b)} w_0(J' \mid I'\cup \{n-1\}) \\
&\geq \frac{f(a)+f(b)-f(a+b)}{2}+\frac{f(b)-f(a)+f(a+b)}{2f(a+b)}
f(\vec{d_0}(I'))+\frac{f(a)-f(b)+f(a+b)}{2f(a+b)}f(\vec{d_0}(J'))\\
&=\frac{f(a)+f(b)-f(a+b)}{2}+\frac{f(b)-f(a)+f(a+b)}{2f(a+b)}f(A)+\frac{f(a)-f(b)+f(a+b)}{2f(a+b)}f(B) \\
&\geq f(B+b)=f(\vec{d}(I)).
\end{align*}
Thus $w$ satisfies Condition (c) of Definition \ref{D:effective-with-respect} (2).
This finishes the proof of the first part. The second part follows by exactly the same argument,
noting that the induction step replaces a $k$-tuple $\vec{d}=(d_1,\dots, d_k)$ of non-negative integers 
satisfying $\sum_{i=1}^k d_i=n$ by a $(k-1)$-tuple $\vec{d}_0=(d_1,\dots, d_{k-2}, d_{k-1}+d_{k})$
 of non-negative integers 
satisfying the same condition. 
\end{proof}

\begin{theorem}\label{T:2nd-semiample}
Let $D=\sum_{r=2}^{\lfloor n/2\rfloor} a_r\Delta_r$ be a symmetric divisor on $\M_{0,n}$
and $p_D\co \ZZ_n \to \ZZ$ be the function defined by \eqref{E:pd-function}.
Suppose that for some 
\[
\lambda \geq \lambda_{\fnef}(D) 
\]
we have that the function $f_{\lambda}\co \ZZ_n \to \QQ$ defined by
$f_\lambda=\lambda A_n+p_D$ (i.e., $f_{\lambda}(r)=\lambda r(n-r)-a_r$)
satisfies \eqref{E:sufficient-strong}
for all non-negative integers $a,b,A,B$ such that $a+b+A+B=n$. Then $D$ is boundary semiample.
\end{theorem}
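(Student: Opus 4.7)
The proof will be a direct assembly of Proposition \ref{P:tree-effective}(2), Theorem \ref{P:weakly-semiample}, and the bookkeeping observations from the proof of Lemma \ref{L:every-F-nef}. The plan is to reduce Theorem \ref{T:2nd-semiample} to showing that $f_\lambda$ is weakly tree-effective as a function on $\ZZ_n$, and then invoke Theorem \ref{P:weakly-semiample} applied to the $\ZZ_n$-$n$-tuple $(1)_n$.

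First I would verify the hypotheses of Proposition \ref{P:tree-effective}(2) for the function $f_\lambda$. By the construction of $\lambda_{\fnef}(D)$ in the proof of Lemma \ref{L:every-F-nef} and the identity \eqref{E:standard-function}, the condition $\lambda \geq \lambda_{\fnef}(D)$ is exactly what is needed for $f_\lambda = \lambda A_n + p_D$ to be F-nef. Since $\ZZ_n$ is a finite abelian group, Lemma \ref{L:subadditivity} then guarantees subadditivity, i.e., $d_{f_\lambda}(a,b) \geq 0$ for all $a,b$. To check $f_\lambda \geq 0$, I observe that $f_\lambda(0) = \lambda A_n(0) + p_D(0) = 0$; combined with symmetry $f_\lambda(a) = f_\lambda(-a)$ and subadditivity $f_\lambda(a) + f_\lambda(-a) \geq f_\lambda(0) = 0$, this forces $f_\lambda(a) \geq 0$ for every $a \in \ZZ_n$. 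The remaining hypothesis, namely inequality \eqref{E:sufficient-strong} for all non-negative integers $a,b,A,B$ with $a+b+A+B=n$, is assumed in the theorem.

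With these hypotheses in hand, Proposition \ref{P:tree-effective}(2) gives that $f_\lambda$ is weakly tree-effective. Applying Theorem \ref{P:weakly-semiample} in the special case $k = n$ and $\vec{d} = (1, \dots, 1)$ (a valid choice since $1 + \cdots + 1 = n$), I conclude that the line bundle $\DD(\ZZ_n, f_\lambda; (1)_n)$ is boundary semiample on $\M_{0,n}$.

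It remains to identify this divisor with $D$. By the linearity of $\DD(\ZZ_n, -; (1)_n)$ in the function argument,
\[
\DD(\ZZ_n, f_\lambda; (1)_n) = \lambda \DD(\ZZ_n, A_n; (1)_n) + \DD(\ZZ_n, p_D; (1)_n).
\]
As noted in the proof of Lemma \ref{L:every-F-nef}, equation \eqref{E:standard-function} forces $\DD(\ZZ_n, A_n; (1)_n) = 0$, and the defining identity \eqref{E:pd-function} gives $\DD(\ZZ_n, p_D; (1)_n) = D$. Hence $\DD(\ZZ_n, f_\lambda; (1)_n) = D$, completing the proof. There is no serious obstacle here beyond the non-negativity check for $f_\lambda$, which is handled cleanly once one combines F-nefness with $f_\lambda(0) = 0$.
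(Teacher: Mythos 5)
Your proof is correct and follows essentially the same route as the paper: verify F-nefness of $f_\lambda$ from $\lambda\geq\lambda_{\fnef}(D)$, deduce non-negativity and subadditivity, apply Proposition \ref{P:tree-effective}(2) to get weak tree-effectivity, and conclude via Theorem \ref{P:weakly-semiample} after identifying $\DD(\ZZ_n,f_\lambda;(1)_n)$ with $D$. You merely spell out two steps the paper leaves implicit (the non-negativity of $f_\lambda$ via $f_\lambda(0)=0$ plus subadditivity, and the linearity computation showing $\DD(\ZZ_n,f_\lambda;(1)_n)=D$), and you correctly cite Theorem \ref{P:weakly-semiample} where the paper's printed proof points to Theorem \ref{P:semiample}.
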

\begin{proof}
By the assumption on $\lambda$, $f_{\lambda}$ is F-nef (cf. the proof of Lemma \ref{L:every-F-nef}). Hence
$f_{\lambda}$ takes only non-negative values and $f_\lambda$ is subadditive by Lemma \ref{L:subadditivity}. 
By Proposition \ref{P:tree-effective}(2), we have that $f_{\lambda}$ is weakly tree-effective.
Noting that
\[
D=\DD\bigl(\ZZ_n, f_{\lambda}; (\underbrace{1,\dots,1}_{n})\bigr),
\]
we conclude that $D$ is boundary semiample by Theorem \ref{P:semiample}.
\end{proof}

\section{New nef divisors on $\M_{0,n}$}
\label{S:new-nef}


In Proposition \ref{P:cyclic=balanced}, we showed that
a balanced cyclic quadratic form $Q_f(x_0,\dots,x_{m-1})$ corresponds to a cyclically effective F-nef function
$f\co \ZZ_m \to \QQ$ with $f(0)=0$, and so gives rise to an infinitude of 
boundary semiample divisors (Theorem \ref{T:balanced}).
In this section, we prove that if $Q_f$ (or, equivalently, $f$) satisfies some further positivity conditions,
then we can construct a new F-nef function $\widetilde{f}$ out of $f$ in such a way that $\widetilde{f}$ is effective.
This procedure gives rise to new infinite families of nef divisors $\DD(\ZZ_m, \widetilde{f}; \vec{d})$.

\begin{definition}\label{D:tilde-f} 
Suppose $f\co G \ra \QQ$ is a symmetric function with $f(0)=0$. Set 
\begin{align*}
m(f):=\frac{1}{2}\min \{ 2f(a)+2f(b)-f(a+b)-f(a-b) \mid a,b\in G \}. \\
\end{align*}
We define 
$\widetilde{f}\co G \ra \QQ$ by 
\begin{equation}
\widetilde{f}(k)=\begin{cases} f(k) & \text{for $k\neq 0$}, \\
m(f) & \text{for $k=0$}. 
\end{cases}
\end{equation}
\end{definition}

The point of this definition is that for an F-nef function $f\co G \to \QQ$ with $f(0)=0$, 
we can increase the value of the function at $0$  
up to $m(f)$ without losing the F-nefness property:
\begin{lemma}
If $f\co G \to \QQ$, with $f(0)=0$, is F-nef, then $\widetilde{f}\co G \to \QQ$ is also F-nef.
\end{lemma}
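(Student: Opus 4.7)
The plan is to verify the F-nef inequality for $\widetilde{f}$ at every triple $(a,b,c)\in G^3$ by case analysis on how many of the entries of $\{a,b,c,a+b+c\}$ and $\{a+b,a+c,b+c\}$ vanish in $G$. Denote these counts by $k_L$ and $k_R$. Since $\widetilde{f}-f$ is supported at $0$ with value $m(f)$, the F-nef excess of $\widetilde{f}$ at $(a,b,c)$ equals the corresponding excess of $f$ plus $(k_L-k_R)m(f)$, so the lemma reduces to the inequality
\[
\bigl[f(a)+f(b)+f(c)+f(a+b+c)\bigr]-\bigl[f(a+b)+f(a+c)+f(b+c)\bigr] \geq (k_R-k_L)\,m(f)
\]
in each admissible zero pattern.

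Before the case split I would record two preliminary facts. First, $m(f)\geq 0$: applying F-nefness of $f$ to the triple $(a,b,-b)$ and using symmetry together with $f(0)=0$ yields $2f(a)+2f(b)\geq f(a+b)+f(a-b)$, so every expression in the minimum defining $2m(f)$ is nonnegative. Second, when $k_L\geq k_R$, the desired inequality follows immediately from F-nefness of $f$ combined with $m(f)\geq 0$.

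The substantive work concerns $k_L<k_R$. Using the $S_4$-symmetric reformulation, in which the F-nef condition reads $\sum_{i=1}^{4}f(a_i)\geq \sum_{1\leq i<j\leq 3}f(a_i+a_j)$ with $a_1+a_2+a_3+a_4=0$, a short enumeration rules out all patterns except $(k_L,k_R)=(0,1),(0,2),(0,3)$. In case $(0,1)$, after relabeling so that $b=-a$, the excess is $2f(a)+2f(c)-f(a+c)-f(a-c)$, which by the defining inequality for $m(f)$ at the pair $(a,c)$ is at least $2m(f)\geq m(f)$. In case $(0,2)$, I would reduce to $b=c=-a$ with $a$ not $2$-torsion; the excess becomes $4f(a)-f(2a)=2f(a)+2f(a)-f(2a)-f(0)$, which is bounded below by $2m(f)$ via the pair $(a,a)$ in the definition.

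The main obstacle I expect is the remaining case $(0,3)$, which forces $a=b=c$ to be a nonzero $2$-torsion element with $a+b+c=a$; here the excess equals $4f(a)$ and one needs $4f(a)\geq 3m(f)$. The bound $m(f)\leq 2f(a)$ coming from the pair $(a,a)$ is insufficient. To sharpen it I would exploit the identity $f(a-b')=f(a+b')$, which holds by symmetry because $2a=0$ forces $-(a+b')=a-b'$; then the defining inequality at any pair $(a,b')$ with $b'\in G\setminus\{0,a\}$ collapses to $m(f)\leq f(a)+f(b')-f(a+b')$. Averaging this estimate over the two admissible choices $b'$ and $a+b'$, whose $f(b')$-type contributions cancel telescopically, yields $m(f)\leq f(a)$, hence $3m(f)\leq 3f(a)\leq 4f(a)$. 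In the boundary case $G\simeq\ZZ/2$, where no admissible $b'$ exists, the minimum defining $m(f)$ collapses to $0$, so $\widetilde{f}=f$ and the lemma is trivial.
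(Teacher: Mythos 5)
Your case analysis is correct and, unlike the paper's one-line ``follows easily from Definition \ref{D:F-nef},'' actually does the work: the reduction to the sign of $(k_L-k_R)m(f)$, the observation that $m(f)\geq 0$ via the triple $(a,b,-b)$, the enumeration showing that $k_L\geq 1$ forces $k_R=0$ (so only the patterns $(0,1),(0,2),(0,3)$ need attention), and the treatment of $(0,1)$ via the pair $(a,c)$ and of $(0,2)$ via the pair $(a,a)$ are all sound. The averaging trick in case $(0,3)$ --- using that $2a=0$ collapses the defining inequality at $(a,b')$ to $m(f)\leq f(a)+f(b')-f(a+b')$ and then summing over $b'$ and $a+b'$ to get $m(f)\leq f(a)$ --- is a genuinely nontrivial step that the paper nowhere records, and it is exactly what is needed to get $4f(a)\geq 3m(f)$ when $|G|\geq 3$.

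The one place where your argument is fragile is the same place where the paper itself is inconsistent: the range of the minimum in Definition \ref{D:tilde-f}. Read literally (all $a,b\in G$, including $b=0$), the term $2f(a)+2f(0)-f(a)-f(a)=0$ forces $m(f)\leq 0$, hence $m(f)=0$ and $\widetilde f=f$ for \emph{every} $f$ --- which trivializes the lemma but contradicts the paper's own computation $m(A_m)=m$ in Example \ref{E:new}. Under the evidently intended reading (minimum over nonzero $a,b$), your argument goes through verbatim whenever $|G|\geq 3$, but your fallback for $G\simeq\ZZ/2$ (``the minimum collapses to $0$'') is no longer available, and in fact the lemma is \emph{false} there: $\widetilde{A_2}=E_2$, which the paper's Example \ref{example-E} explicitly notes is not F-nef, even though $A_2$ is F-nef by Example \ref{E:standard}. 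So the honest conclusion of your proof is that the statement needs the hypothesis $|G|\geq 3$ (harmless in context, since Proposition \ref{P:new-nef} already assumes $m\geq 3$); you identified the right obstruction but resolved it by appealing to the literal definition rather than flagging that the edge case is a genuine counterexample to the lemma as stated.
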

\begin{proof}
This follows easily from Definition \ref{D:F-nef}.
\end{proof}

\begin{prop}\label{P:new-nef} Let $m\geq 3$. Suppose $f\co \ZZ_m \ra \QQ$ is a cyclically effective F-nef function
such that the balanced associated quadratic form $Q_f$ satisfies the following additional condition:
\begin{enumerate}
\item[($\dagger$)] The minimum value of $Q_f$ on the set 
\[
\{(x_0,\dots,x_{m-1}) \in \ZZ^m \setminus (0,\dots, 0) \mid \sum_{i=0}^{m-1}x_i = 0\}
\]
is at least $m(f)$ from Definition \ref{D:tilde-f}.
\end{enumerate} 
Then $\widetilde{f}\co \ZZ_m \ra \QQ$ is effective. Moreover, for any $\ZZ_m$-$n$-tuple $\vec{d}$,
we have that
$\widetilde{f}$ is $T$-effective with respect to $\vec{d}$ for every $[n]$-labeled tree whose
all $T$-partitions $I\sqcup J=[n]$ satisfy $\vec{d}(I)\neq 0\in \ZZ_m$. 
\end{prop}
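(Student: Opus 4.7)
My strategy is to start with the cyclic weighting of Lemma \ref{L:cyclic} applied to the original function $f$ and invoke $(\dagger)$ to compensate for the sole discrepancy $\widetilde{f}(0)-f(0)=m(f)$. By the preceding lemma, $\widetilde{f}$ is F-nef, and Lemma \ref{L:subadditivity} gives non-negativity.

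For the ``moreover'' statement, fix a $\ZZ_m$-$n$-tuple $\vec{d}$ and a tree $T$ with $\vec{d}(I)\neq 0\in\ZZ_m$ for every $T$-partition $I\sqcup J=[n]$. Since every singleton partition $\{i\}\sqcup([n]\setminus\{i\})$ is a $T$-partition, all $d_i\neq 0$. Following the proof of Lemma \ref{L:implication-1}, choose a cyclic ordering $\sigma\co[n]\to P_n$ making every $T$-partition $\sigma$-contiguous, and let $w_\sigma$ be the cyclic weighting of Lemma \ref{L:cyclic} associated to $f$ and $\vec{d}$. Next I would verify the three conditions of Definition \ref{D:effective-with-respect}(2). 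Conditions (a) and (c) are immediate: $w_\sigma(i)=f(d_i)=\widetilde{f}(d_i)$ since $d_i\neq 0$, and $w_\sigma(I\mid J)=f(\vec{d}(I))=\widetilde{f}(\vec{d}(I))$ for each $T$-partition since $\vec{d}(I)\neq 0$. Condition (b) at a non-$T$-partition with $\vec{d}(I)\neq 0$ is cyclic effectivity of $f$.

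The essential new input handles condition (b) at non-$T$-partitions $I\sqcup J$ with $\vec{d}(I)=0\in\ZZ_m$: writing $\sigma_I(\vec{d})=(z_0,\ldots,z_{m-1})$ with integer coordinate sum $mc$, the shifted vector $(z_0-c,\ldots,z_{m-1}-c)$ has integer sum $0$ and yields the same value of $Q_f$ by \eqref{E:remark-Qf}; if this shifted vector is nonzero, condition $(\dagger)$ delivers $w_\sigma(I\mid J)=Q_f(\sigma_I(\vec{d}))\geq m(f)=\widetilde{f}(0)$, establishing (b).

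\textbf{The hardest step} is the degenerate case where $\sigma_I(\vec{d})$ is a constant vector $(c,\ldots,c)$ (equivalently the shifted vector is zero). This occurs precisely for $\sigma$-contiguous partitions with $\vec{d}(I)\equiv 0\pmod{m}$, necessarily non-$T$ under our hypothesis, and there $w_\sigma(I\mid J)=Q_f(c,\ldots,c)=c^2 m\sum_k q_f(k)=0$, failing (b). I would resolve this by first attempting to re-choose $\sigma$ among the finitely many cyclic orderings of $[n]$ compatible with $T$, and when no single $\sigma$ avoids every coincidence, by adding to $w_\sigma$ a correction weighting $w'$ with $w'(i)=0$ for every $i\in[n]$, $w'(I\mid J)=0$ on every $T$-partition, and $w'(I\mid J)\geq m(f)$ on each remaining problematic partition; the $(n-3)(n-2)/2$-dimensional space of admissible corrections makes this a tractable linear-programming problem whose feasibility I expect to follow from the positivity of $m(f)$ and $(\dagger)$. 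Finally, the (general) effectivity of $\widetilde{f}$ follows from the moreover statement by selecting for each $\vec{d}$ a tree $T$ with $\vec{d}(I)\neq 0$ on all $T$-partitions; for exceptional $\vec{d}$ where no such $T$ exists, a direct argument using the cyclic weighting and the $(\dagger)$-bound suffices, since effectivity requires only inequalities and is strictly weaker than $T$-effectivity.
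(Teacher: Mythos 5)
Your setup is sound and follows the same skeleton as the paper's argument: reduce to trees $T$ all of whose $T$-partitions have $\vec{d}(I)\neq 0$, start from the cyclic weighting $w_\sigma$ of Lemma \ref{L:cyclic} for a $T$-compatible ordering $\sigma$, dispatch conditions (a), (c), and the non-degenerate part of (b) exactly as you do, and isolate the partitions with $\vec{d}(I)=0$ and $\sigma_I(\vec{d})$ constant as the crux. But at that crux you have not given a proof. Re-choosing $\sigma$ cannot work in general: every $T$-compatible ordering produces its own family of contiguous partitions, and nothing prevents some of them (necessarily non-$T$-partitions) from having $\vec{d}(I)\equiv 0$; moreover the degenerate case is not confined to $\sigma$-contiguous partitions, since $\sigma_I(\vec{d})$ can be a constant vector for non-contiguous $I$ as well. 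Your fallback --- adding a correction weighting $w'$ found by linear programming --- begs the question: feasibility of that LP is exactly the content of the proposition, and your list of constraints is incomplete, because $w'$ alters the flow across \emph{every} partition of $[n]$, so you must also guarantee that $w_\sigma+w'$ does not drop below $f(\vec{d}(I))$ at the non-problematic partitions. You give no argument for either point, and condition $(\dagger)$ together with ``positivity of $m(f)$'' does not obviously supply one.

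The paper's resolution is a concrete construction you are missing: it defines the $T$-cyclic weighting $w_T$ as the \emph{average} of the $2^{n-3}$ cyclic weightings $w_\sigma$ over all planar embeddings of $T$. Averaging preserves the vertex flows, the inequalities $w(I\mid J)\geq f(\vec{d}(I))$, and the equalities at $T$-partitions. For a non-$T$-partition with $\vec{d}(I)=0$, one chooses an internal node $p$ whose two pendant subtrees land on opposite sides of the partition and pairs each embedding with its flip $\iota(\sigma)$ at $p$; the key computation is that $\iota$ shifts $\sigma_I(\vec{d})$ by an explicit vector, so that in each pair either both values of $Q_f$ are covered by $(\dagger)$, or the degenerate value $Q_f(0,\dots,0)=0$ is paired with $Q_f$ evaluated at a vector of the form $(1,\dots,1,0,\dots,0,-1,\dots,-1,0,\dots,0)$, which equals $2f(a)+2f(b)-f(a+b)-f(a-b)\geq m(f)=\widetilde{f}(0)$ by the very definition of $m(f)$. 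This averaging-plus-involution argument is the essential new idea, and it is also why the hypothesis is phrased in terms of $m(f)$. Separately, your final sentence about ``exceptional $\vec{d}$'' (those with some $d_i=0$, for which no admissible $T$ exists) is again only an assertion; the paper handles it by an explicit reduction that deletes the zero entries and reattaches vertex $i$ via a triangle with edge weights $\pm\widetilde{f}(0)/2$.
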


Before we give a proof of the above proposition, we formulate its implications for
the nef cone of $\M_{0,n}$:

\begin{theorem}
\label{T:new-nef}
 Let $f\co \ZZ_m \ra \QQ$ be as in Proposition \ref{P:new-nef}. Then
for any $\ZZ_m$-$n$-tuple $\vec{d}=(d_1,\dots,d_n)$, the 
divisor 
 \begin{equation}
\DD\bigl( \ZZ_m, \widetilde{f} ; \vec{d}\bigr)
=\sum_{i=1}^n \widetilde{f}(d_i)\psi_i-\sum_{I,J}\widetilde{f}(\vec{d}(I)) \Delta_{I,J}
=\DD\bigl(\ZZ_m, f; \vec{d}\bigr)- \widetilde{f}(0)\sum_{\substack{I\sqcup J=[n] \\ m\mid \vec{d}(I)}} \Delta_{I,J}
\end{equation}
is stratally effective boundary, hence nef, on $\M_{0,n}$.
Moreover, $\DD\bigl( \ZZ_m, \widetilde{f}; \vec{d}\bigr)$
is base point free away from $\sum_{\substack{I\sqcup J=[n] \\ m\mid \vec{d}(I)}} \Delta_{I,J}$.
\end{theorem}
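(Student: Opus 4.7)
The plan is to derive this theorem as a direct consequence of Proposition \ref{P:new-nef}, combined with the general machinery of Section \ref{S:nefness-bpf}. Proposition \ref{P:new-nef} supplies exactly two properties of $\widetilde{f}$: (i) $\widetilde{f}$ is effective on $\ZZ_m$, and (ii) $\widetilde{f}$ is $T$-effective with respect to $\vec{d}$ for every $[n]$-labeled tree $T$ whose $T$-partitions $I\sqcup J=[n]$ all satisfy $\vec{d}(I)\neq 0\in \ZZ_m$. Property (i) feeds into Theorem \ref{P:semiample} to yield nefness, while property (ii) combined with Lemma \ref{L:T-effective}(2) yields pointwise base point freeness outside the prescribed bad locus. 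The identification of $\DD(\ZZ_m,\widetilde{f};\vec{d})$ with $\DD(\ZZ_m,f;\vec{d})-\widetilde{f}(0)\sum_{m\mid \vec{d}(I)}\Delta_{I,J}$ is a direct consequence of Definition \ref{D:tilde-f}, since $\widetilde{f}$ and $f$ agree away from $0\in\ZZ_m$ and (taking positive integer representatives) $\vec{d}(I)=0\in\ZZ_m$ iff $m\mid \vec{d}(I)$.

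For the first assertion, I apply Theorem \ref{P:semiample} to the effective function $\widetilde{f}$ and the $\ZZ_m$-$n$-tuple $\vec{d}$: this shows that $\DD(\ZZ_m,\widetilde{f};\vec{d})$ is stratally effective boundary, and hence nef, on $\M_{0,n}$.

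For base point freeness, I argue pointwise. Fix $[C]\in \M_{0,n}$ lying outside $\bigcup_{m\mid \vec{d}(I)}\Delta_{I,J}$ and let $T:=G(C)$ be its dual graph. The defining condition on $[C]$ is that no $G(C)$-partition $I\sqcup J=[n]$ satisfies $m\mid \vec{d}(I)$, i.e.\ every $T$-partition $I\sqcup J$ has $\vec{d}(I)\neq 0\in\ZZ_m$. This is precisely the hypothesis needed to invoke part (ii) of Proposition \ref{P:new-nef}, so $\widetilde{f}$ is $T$-effective with respect to $\vec{d}$. By Lemma \ref{L:T-effective}(2), there exists an effective boundary $\QQ$-divisor $D\sim_{\QQ}\DD(\ZZ_m,\widetilde{f};\vec{d})$ whose support does not contain the generic point of $M_T$. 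Writing $D=\sum c_{I,J}\Delta_{I,J}$ with $c_{I,J}\geq 0$, this condition forces $c_{I,J}=0$ for every $T$-partition $I\sqcup J$. Since $[C]\in \Delta_{I,J}$ if and only if $I\sqcup J$ is a $G(C)$-partition, i.e.\ a $T$-partition, we conclude $[C]\notin \Supp(D)$, so the linear system $|\DD(\ZZ_m,\widetilde{f};\vec{d})|$ is base point free at $[C]$.

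There is no real obstacle to overcome: the entire argument is a bookkeeping exercise applying results already in place, and all of the serious work has been absorbed into Proposition \ref{P:new-nef}. The only point that deserves attention is matching the hypothesis of the second part of Proposition \ref{P:new-nef} (which allows arbitrary $[n]$-labeled trees, not just binary ones) with the combinatorial description of the bad locus; this is handled cleanly by taking $T=G(C)$ and using that the boundary divisors containing $[C]$ are exactly those indexed by the $G(C)$-partitions.
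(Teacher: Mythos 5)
Your proof is correct, and the first half (nefness via effectivity of $\widetilde{f}$ plus Theorem \ref{P:semiample}) coincides with the paper's. The base-point-freeness half takes a slightly different, and arguably cleaner, route. The paper only invokes the tree-effectivity clause of Proposition \ref{P:new-nef} for the \emph{binary} dual trees of $0$-dimensional strata $B \notin \Delta_0$, and then propagates the conclusion to an arbitrary point $[C]\notin\Delta_0$ by observing that any boundary stratum not contained in $\Delta_0$ contains in its closure a $0$-dimensional stratum not in $\Delta_0$ — a degeneration step that is where the paper uses the hypothesis $m\geq 3$. You instead apply the tree-effectivity clause directly to $T=G(C)$, exploiting that Proposition \ref{P:new-nef} is stated for arbitrary $[n]$-labeled trees, and then conclude $[C]\notin\Supp(D)$ from the fact that the boundary divisors through $[C]$ are exactly those indexed by $T$-partitions; this bypasses the degeneration lemma entirely (though $m\geq 3$ is of course still needed as a hypothesis of Proposition \ref{P:new-nef} itself). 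Both arguments are valid; yours buys a more self-contained pointwise statement at the cost of relying on the full strength of the ``moreover'' clause for non-binary trees. One shared caveat, which is an imprecision of the source statements rather than of your argument: if some $d_i=0\in\ZZ_m$, the leaf-edge partitions of any tree violate the literal hypothesis $\vec{d}(I)\neq 0$ of the ``moreover'' clause (and the displayed identity in the theorem then also needs the $\psi_i$-correction visible in \eqref{E:new-nef}); the paper's proof of Proposition \ref{P:new-nef} handles this case by a separate reduction, and it would be worth a sentence noting that your pointwise argument inherits that reduction.
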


\begin{proof} The first part follows from effectivity of $\widetilde{f}$ (by Proposition \ref{P:new-nef}) and 
Theorem \ref{P:semiample}. For the second part,
denote 
\[
\Delta_{0}:=\sum_{\substack{I\sqcup J=[n] \\ m\mid \vec{d}(I)}}  \Delta_{I,J}.
\]
Let $T$ be the dual graph of a $0$-dimensional boundary $B$ on $\M_{0,n}$ that
does not lie in $\Delta_{0}$. Then by Proposition \ref{P:new-nef}, there is an effective linear
combination of the boundary that avoids $B$ and is linearly equivalent to 
$\DD\bigl(\ZZ_m, \widetilde{f}; (d_1,\dots, d_n)\bigr)$. The claim now follows from the observation that any  
boundary stratum not entirely lying in $\Delta_{0}$ contains in its closure
some $0$-dimensional boundary stratum $B$ that also
does not lie in $\Delta_{0}$ (this is where we use $m\geq 3$). 
\end{proof}

\begin{proof}[Proof of Proposition \ref{P:new-nef}]
Let $\vec{d}=(d_1,\dots,d_n)$ be a $\ZZ_m$-$n$-tuple.
To prove effectivity of $\widetilde{f}$, we first reduce to the case where no $d_i$ is $0$ in $\ZZ_m$. Indeed, 
if $d_n=0$, then effectivity of $\widetilde{f}$ with respect to $(d_1,\dots,d_{n-1})$ implies
the effectivity of $\widetilde{f}$ with respect to $\vec{d}$ by considering any triangle in $\Gamma([n])$
with a vertex at $n$, and with edges emanating from $\widetilde{f}$ having weight $\widetilde{f}(0)/2$
and the remaining edge having weight $-\widetilde{f}(0)/2$. 

Second, we observe that if $\vec{d}$ contains no $0$'s, then there exists an $[n]$-labeled unrooted
binary tree whose
all $T$-partitions $I\sqcup J=[n]$ satisfy $\vec{d}(I)\neq 0\in \ZZ_m$. 
Thus to prove effectivity of $\widetilde{f}$ with respect to $\vec{d}$, it remains to prove
$T$-effectivity of $\widetilde{f}$ with respect to $\vec{d}$ for all such trees. 
This is achieved by constructing a $T$-cyclic weighting, which we now describe. 

To begin, we recall that 
any planar embedding of an $[n]$-labeled tree $T$ defines a cyclic ordering $\sigma\co [n]\to P_n$,
obtained by going around the embedded graph in the clockwise direction. 
Because every $T$-partition of $[n]$ gives a $\sigma$-contiguous partition of $[n]$, it follows
from cyclic effectivity of $f$ that
the cyclic weighting $w_{\sigma}$ on $\Gamma([n])$ constructed in Lemma \ref{L:cyclic}
satisfies 
\[
w_\sigma(I\mid J)\geq f(\vec{d}(I)),
\]
with the equality holding for every $T$-partition $I\sqcup J =[n]$.
There are $2^{n-3}$ distinct planar embedding of $T$. 
We define the \emph{$T$-cyclic weighting $w_T$} on $\Gamma([n])$
to be the average of all cyclic weightings $w_{\sigma}$ on $\Gamma([n])$ taken 
over all planar embeddings of $T$, as described above. We proceed to describe
the properties of $w_T$:

\begin{claim*}[Property 1] We have that $w_T(I\mid J) \geq f(\vec{d}(I))$ for every 
partition $I\sqcup J$. Thus $w_T(I\mid J) \geq \widetilde{f}(\vec{d}(I))$
for every partition $I\sqcup J=[n]$ satisfying $\vec{d}(I)\neq 0 \in \ZZ_m$.  
\end{claim*}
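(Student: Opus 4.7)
The plan is to reduce the claim to cyclic effectivity of $f$ via an averaging argument. The observation underlying the construction of $w_T$ is that each planar embedding of $T$ induces a cyclic ordering $\sigma \co [n] \to P_n$ (read off by traversing the embedded tree clockwise), with the property that every $T$-partition of $[n]$ is $\sigma$-contiguous. In particular, for each such $\sigma$ the cyclic weighting $w_\sigma$ of Lemma \ref{L:cyclic} is well-defined.

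By the hypothesis of Proposition \ref{P:new-nef} that $f$ is cyclically effective (which via Proposition \ref{P:cyclic=balanced} is equivalent to balancedness of $Q_f$), the weighting $w_\sigma$ satisfies the global inequality
\[
w_\sigma(I\mid J) \;\geq\; f(\vec{d}(I))
\]
for \emph{every} partition $I\sqcup J = [n]$, not only for $\sigma$-contiguous ones; this is exactly Condition (b) of Definition \ref{D:effective-with-respect}(3). Since $w_T$ is the average of the $2^{n-3}$ weightings $w_\sigma$, linearity of averaging immediately yields
\[
w_T(I\mid J) \;=\; \frac{1}{2^{n-3}} \sum_{\sigma} w_\sigma(I\mid J) \;\geq\; f(\vec{d}(I)),
\]
proving the first inequality of the claim.

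For the second inequality, I would invoke Definition \ref{D:tilde-f}: one has $\widetilde{f}(k) = f(k)$ for every $k \neq 0 \in \ZZ_m$. Applied with $k = \vec{d}(I)$ under the hypothesis $\vec{d}(I) \neq 0 \in \ZZ_m$, this converts $w_T(I\mid J) \geq f(\vec{d}(I))$ into $w_T(I\mid J) \geq \widetilde{f}(\vec{d}(I))$, as desired.

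There is no real obstacle in this argument: the substantive content is already packaged into the equivalence between cyclic effectivity and balancedness (Proposition \ref{P:cyclic=balanced}), and the rest is a formal averaging combined with the defining identity of $\widetilde{f}$ away from $0$. The reason for introducing the averaged weighting $w_T$ (as opposed to using a single $w_\sigma$) will only become relevant when one tries to upgrade the inequality at partitions with $\vec{d}(I) = 0$, where the stronger bound $\geq \widetilde{f}(0) = m(f)$ must be extracted from the positivity condition ($\dagger$); that is a later concern and not part of this first property.
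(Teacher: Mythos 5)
Your argument is correct and is essentially the paper's own proof: the paper disposes of Property 1 in one sentence by citing Lemma \ref{L:cyclic} and the cyclic effectivity of $f$, which is exactly the averaging-over-embeddings reduction you spell out (note only that to know the \emph{specific} weighting $w_\sigma$ satisfies Condition (b), rather than merely \emph{some} weighting, you implicitly use the uniqueness statement of Lemma \ref{L:cyclic}, just as the paper does in the proof of Proposition \ref{P:cyclic=balanced}). Nothing further is needed.
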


\begin{claim*}[Property 2] If $e$ is an edge of $T$ and $I\sqcup J=[n]$
is the $e$-partition satisfying $\vec{d}(I)= 0 \in \ZZ_m$, 
then $w_T(I\mid J)=Q_f(0,\dots,0)=0$.
\end{claim*}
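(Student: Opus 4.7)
The plan is to exploit that $w_T$ is by definition the average of the cyclic weightings $w_\sigma$ over the $2^{n-3}$ planar embeddings of $T$, so it suffices to show $w_\sigma(I \mid J) = 0$ for each such $\sigma$.

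First I would observe that any planar embedding of $T$ realizes the leaf set $I$ cut off by the edge $e$ as a contiguous arc on $P_n$. Hence for the induced cyclic ordering $\sigma\co [n]\to P_n$, the $e$-partition $I \sqcup J$ is $\sigma$-contiguous. By Lemma~\ref{L:cyclic},
\[
w_\sigma(I \mid J) = Q_f(\sigma_I(\vec{d})).
\]

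Next I would unpack $\sigma_I(\vec{d})$ using the hypothesis $\vec{d}(I) = 0 \in \ZZ_m$. After fixing positive-integer representatives of the $d_i$'s as in Definition~\ref{D:sigma-vector}, $\vec{d}(I)$ is a non-negative integer divisible by $m$, say $\vec{d}(I) = mc$. Since $I \sqcup J$ is $\sigma$-contiguous, the remark following Definition~\ref{D:sigma-vector} gives $\sigma_I(\vec{d}) = \bigl((c+1)_r,(c)_{m-r}\bigr)$ with $r = 0$, i.e.\ $\sigma_I(\vec{d}) = (c,c,\dots,c)$. The translation invariance recorded in~\eqref{E:remark-Qf} then yields
\[
Q_f(c,c,\dots,c) = Q_f(0,0,\dots,0) = 0,
\]
so $w_\sigma(I \mid J) = 0$. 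Averaging over the planar embeddings gives $w_T(I \mid J) = 0$.

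The main (and only non-routine) input is the observation that a $\sigma$-contiguous partition whose weight is divisible by $m$ forces $\sigma_I(\vec{d})$ to be a constant vector, on which $Q_f$ vanishes by~\eqref{E:remark-Qf}. In particular this argument is insensitive to the positivity condition~($\dagger$) and even to the balancedness of $Q_f$; those hypotheses enter only in Property~1 and in ensuring F-nefness of $\widetilde{f}$.
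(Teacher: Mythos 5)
Your proof is correct and follows the same route the paper intends: the paper simply asserts that Property 2 follows from the properties of $w_\sigma$ in Lemma \ref{L:cyclic}, and you have filled in exactly the right details — each planar embedding of $T$ makes the $e$-partition $\sigma$-contiguous, so $\sigma_I(\vec{d})$ is the constant vector $(c,\dots,c)$ and $Q_f$ vanishes there by \eqref{E:remark-Qf} (equivalently, $w_\sigma(I\mid J)=f(\vec{d}(I))-f(0)=0$ directly from \eqref{E:cyclic-weight}). Your closing remark that neither balancedness nor condition ($\dagger$) is needed for this property is also accurate.
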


\begin{claim*}[Property 3]
Suppose all $T$-partitions $I\sqcup J=[n]$ satisfy $\vec{d}(I)\neq 0 \in \ZZ_m$. 
We have that $w_T(I\mid J)\geq \widetilde{f}(0)$
for any partition $I\sqcup J=[n]$
such that $\vec{d}(I)= 0 \in \ZZ_m$.
\end{claim*}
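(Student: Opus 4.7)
The plan is to estimate $w_T(I \mid J)$ directly from its definition as the average of the cyclic weightings $w_\sigma$ over the $2^{n-3}$ planar embeddings of $T$, using condition $(\dagger)$ to control each summand.

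First, by Lemma \ref{L:cyclic} applied to each planar embedding $\sigma$ of $T$, one has $w_\sigma(I \mid J) = Q_f(\sigma_I(\vec{d}))$, so that
\[
w_T(I \mid J) = \frac{1}{N_T}\sum_{\sigma} Q_f(\sigma_I(\vec{d}))
\]
with $N_T = 2^{n-3}$. Since $\vec{d}(I) \equiv 0 \pmod m$, the integer vector $\sigma_I(\vec{d})$ has coordinate sum equal to a multiple of $m$, and the translation invariance \eqref{E:remark-Qf} lets me replace it by $y_\sigma := \sigma_I(\vec{d}) - (c, c, \ldots, c) \in \ZZ^m$, where $c = \vec{d}(I)/m$, without changing the value of $Q_f$. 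Balancedness of $Q_f$ (in particular Condition $(0)$, cf.\ Remark \ref{R:PSD}) ensures that each term $Q_f(y_\sigma) \geq 0$.

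Next, I would dichotomize the embeddings. If $y_\sigma = 0$, that is, $\sigma_I(\vec{d})$ is the constant vector $(c, \ldots, c)$, then $Q_f(y_\sigma) = 0$; otherwise condition $(\dagger)$ gives $Q_f(y_\sigma) \geq m(f) = \widetilde{f}(0)$. The main obstacle is that the degenerate embeddings could, a priori, drag the average below $m(f)$. To circumvent this, I would sharpen the lower bound for the non-degenerate terms by exploiting the identity
\[
Q_f(e_i - e_j) = 2f(1) + 2f(i-j) - f(i-j+1) - f(i-j-1),
\]
which exhibits $Q_f(e_i - e_j)$ as twice one of the expressions appearing in the minimum defining $m(f)$, hence $Q_f(e_i - e_j) \geq 2m(f)$. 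The strategy is then to show that every non-constant $y_\sigma$ arising from our partition enjoys a similar bound, and to construct a combinatorial pairing on the set of planar embeddings of $T$ in which each degenerate embedding is matched with a non-degenerate one. The hypothesis that no $T$-partition $I\sqcup J$ satisfies $\vec{d}(I)=0$ is what prevents the residue distribution of $\bigcup_{i\in I} S_i \subset P_N$ from being equidistributed for too many embeddings, and hence is what makes the pairing possible. Putting these together should yield $w_T(I\mid J) \geq m(f) = \widetilde{f}(0)$, with equality achievable as in the small example $m=3$, $n=4$, $\vec{d}=(1,1,2,2)$.
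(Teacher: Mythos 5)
Your framework --- writing $w_T(I\mid J)$ as the average of $Q_f(\sigma_I(\vec{d}))$ over the $2^{n-3}$ planar embeddings, normalizing by the translation invariance \eqref{E:remark-Qf}, and splitting embeddings into degenerate ($y_\sigma=0$) and non-degenerate ones --- is exactly the paper's, and you correctly identify that the entire difficulty is to keep the degenerate terms from dragging the average below $m(f)$. But your argument stops precisely where the real work begins: the pairing is asserted to exist, not constructed. The paper's construction is the essential content of the proof. Since $\vec{d}(I)=0$ while no $T$-partition has this property, $I\sqcup J$ is not a $T$-partition, and hence there is an internal node $p$ with internal edge $e_0=\overline{pq}$ and two further edges $e_1,e_2$ whose far-side leaf sets $A,B$ satisfy $A\subset I$ and $B\subset J$. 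The involution $\iota$ flips the orientation of $e_0,e_1,e_2$ at $p$, and a direct computation shows that $\iota$ changes $\sigma_I(\vec{d})$ by exactly $\pm\sum_{i=0}^{b-1}(e_i-e_{i+a})$, where $a=\vec{d}(A)$ and $b=\vec{d}(B)$ as positive integer representatives. This simultaneously guarantees that a degenerate embedding is paired with a non-degenerate one and pins down the partner's vector as $(\underbrace{1,\dots,1}_{b},\underbrace{0,\dots,0}_{a-b},\underbrace{-1,\dots,-1}_{b},0,\dots,0)$, whose $Q_f$-value is evaluated explicitly to equal $2\bigl(2f(a)+2f(b)-f(a+b)-f(a-b)\bigr)\geq 4m(f)$, so the pair-average is at least $2m(f)\geq\widetilde{f}(0)$. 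None of this appears in your proposal; the appeal to the hypothesis via ``preventing equidistribution for too many embeddings'' is not an argument.

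A second, more substantive issue: your proposed sharpening --- that \emph{every} non-constant $y_\sigma$ satisfies $Q_f(y_\sigma)\geq 2m(f)$ --- is neither available nor needed. Condition $(\dagger)$ only yields $Q_f(y)\geq m(f)$ for nonzero $y$ of coordinate sum zero, and your identity $Q_f(e_i-e_j)=2f(1)+2f(i-j)-f(i-j+1)-f(i-j-1)\geq 2m(f)$ covers only vectors of that special shape; the partner of a degenerate embedding is a difference of two length-$b$ interval indicator vectors, not a single $e_i-e_j$, so the bound does not transfer by any linearity argument ($Q_f$ is quadratic). What the proof actually requires is the improved bound only for the specific partner of each degenerate embedding, and that comes from the explicit evaluation above together with the definition of $m(f)$, not from $(\dagger)$. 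As written, the proposal has a genuine gap at its central step.
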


\begin{proof} 
The first two claims follow from the properties of the cyclic weighting $w_{\sigma}$
given in Lemma \ref{L:cyclic} and the cyclic effectivity of $f$. We proceed to prove the third claim.

Fix $I\sqcup J=[n]$ with $\vec{d}(I)=0\in \ZZ_m$. By assumption on $T$, $I\sqcup J$ is not a $T$-partition. 
Let $e_0=\overline{pq}$ be the internal edge of $T$ satisfying the following property.
Suppose $e_1,e_2$ are the other two edges from $p$.
Let $A$ be the set of leaves of $T$ lying to the side of $e_1$ opposite to $p$ 
and $B$ be the set of leaves of $T$ lying to the side of $e_2$ opposite to $p$.
Then we require $A\subset I$ and $B\subset J$. 
It is easy to see that such $e_0$ exists because $I\sqcup J$ is not a $T$-partition.
We set $a=\sum_{i\in A} d_i$ and $b=\sum_{j\in B} d_j$.

Let $\iota$ be the involution on the set of the planar embeddings of $T$ given
by the change of orientation of the edges $e_0,e_1,e_2$ at $p$. 
Suppose $\sigma$ is a cyclic ordering corresponding to some planar embedding of $T$.
We denote by $\iota(\sigma)$ the cyclic ordering corresponding to the embedding of $T$
obtained by applying $\iota$ to the planar embedding of $\sigma$. 
Recall the definition of $\sigma_I(\vec{d})$ from Definition \ref{D:sigma-vector}.
The key observation now is that whenever $\sigma_I(\vec{d})=(z_0,\dots, z_{m-1})$,
we have 
\begin{equation*}
\iota(\sigma)_I(\vec{d})=(z_0,\dots, z_{m-1})+(-1)^\epsilon (\underbrace{1,\dots, 1}_{a}, 
\underbrace{0,\dots,0}_{b}, 0, \dots)+(-1)^{\epsilon+1}(\underbrace{0,\dots,0}_{b}\underbrace{1,\dots, 1}_{a}, 0, \dots),
\end{equation*}
where $\epsilon\in\{0,1\}$ depends on the original orientation of $\{e_0,e_1,e_2\}$ at $p$. 

To prove the lemma, it remains to show that the average of $w(I\mid J)$-flows
for any pair $(\sigma, \iota(\sigma))$ of embeddings is at least $\widetilde{f}(0)$.

Suppose that neither $\sigma_I(\vec{d})$ nor $\iota(\sigma)_I(\vec{d})$ is $(0,\dots, 0)$.
Then because $Q_f$ satisfies condition ($\dagger$) of Proposition \ref{P:new-nef}, we have 
both $w_{\sigma}(I\mid J)\geq \widetilde{f}(0)$ and $w_{\iota(\sigma)}(I\mid J) \geq \widetilde{f}(0)$
as desired.

Suppose now that $\sigma_I(\vec{d})=(0,\dots, 0)$. 
Then $\iota(\sigma)_I(\vec{d})=(\underbrace{1,\dots,1}_{b}, \underbrace{0,\dots,0}_{a-b}, \underbrace{-1,\dots,-1}_{b},0,\dots,0)$.
It follows that
\begin{multline*}
\frac{1}{2}\left(w_{\sigma}(I\mid J)+w_{\iota(\sigma)}(I\mid J)\right) =\frac{1}{2}Q_f\bigl(\underbrace{1,\dots,1}_{b}, \underbrace{0,\dots,0}_{a-b}, \underbrace{-1,\dots,-1}_{b}, 0 \dots, 0\bigr)\\ =
 2f(a)+2f(b)-f(a+b)-f(a-b) \geq \widetilde{f}(0),
\end{multline*}
as desired.
\end{proof}

At last, the $T$-effectivity of $\widetilde{f}$ with respect to $\vec{d}$ follows
from Properties (1--3) of the $T$-cyclic weighting $w_T$.
\end{proof}

\begin{example}\label{E:new} 
Suppose $m\geq 3$. The $1^{st}$ standard function $A_m\co \ZZ_m \to \ZZ$ has
the associated quadratic form $Q_{A_m}=m(x_0^2+\cdots+x_{m-1}^2)-(x_0+\cdots+x_{m-1})^2$
that satisfies the assumptions of Proposition \ref{P:new-nef}.
It is straightforward to compute that $m(A_m)=m$. As a result, Theorem \ref{T:new-nef}
shows that the $3^{rd}$ standard function from Example \ref{example-E} is effective. 
This gives a streamlined proof
of Theorem 4.2 from unpublished \cite{new-nef} stating that the following divisor is nef
on $\M_{0,n}$ for any $\ZZ_m$-$n$-tuple $\vec{d}=(d_1,\dots,d_n)$:
\begin{equation}\label{E:new-nef}
\begin{aligned}
\DD\bigl(\ZZ_m, E_m; \vec{d}\bigr) &=\sum_{i=1}^n \modp{d_i}{m}\modp{m-d_i}{m}\psi_i
-\sum_{I\sqcup J=[n]} \modp{\vec{d}(I)}{m}\modp{\vec{d}(J)}{m}\Delta_{I,J} \\
&+m\Bigl(\sum_{\substack{i: \\ m\mid d_i}} \psi_i-\sum_{\substack{I\sqcup J=[n]\\   m \mid \vec{d}(I)}} \Delta_{I,J}\Bigr).
\end{aligned}
\end{equation}

By taking $n=9$, $d_1=\cdots=d_9=1$, and $m=3$, we obtain the divisor $\Delta_2+\Delta_3+2\Delta_4$ in $\Nef(\M_{0,9})$.
This divisor generates an extremal ray of the symmetric nef cone of $\M_{0,9}$ and is not known to come from 
the conformal blocks bundles \cite{swinarski}, or any other geometric construction. 
This is also the only extremal ray of $\Nef(\M_{0,9}/\Sm_9)$ not known to be semiample, see Table \ref{table-experiments}
in  \S\ref{S:experiments}.

It is proved in \cite{fedorchuk-cyclic}, that in the case $d_1=\cdots=d_n$ and $m=3$, the divisor 
\eqref{E:new-nef} generates an extremal ray of the symmetric nef cone of $\M_{0,n}$. 
We believe the following:
\begin{conjecture}
Suppose $m\geq 5$ is prime, $m \mid n$ and $d_1=\cdots=d_n$. 
Then the divisor \eqref{E:new-nef} generates an extremal ray of the symmetric nef cone
of $\M_{0,n}$.
\end{conjecture}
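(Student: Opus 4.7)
The plan is to establish extremality via the dual characterization: a nef divisor $D_{m,n} := \DD(\ZZ_m, E_m; (1)_n)$ generates an extremal ray of $\Nef(\M_{0,n})^{\Sm_n}$ if and only if the null cone $\{F \text{ F-curve} : D_{m,n}\cdot F = 0\}$ spans a subspace of codimension one inside $\NS(\M_{0,n})^{\Sm_n}\otimes\QQ$, which has dimension $\lfloor n/2\rfloor - 1$. Since nefness already follows from Theorem \ref{T:new-nef}, the entire task is to produce enough symmetric F-curves on which $D_{m,n}$ vanishes and then show that the resulting linear system forces $D_{m,n}$ to be the unique solution, up to scaling, among symmetric divisors.

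The first step is to compute $D_{m,n}\cdot F_{I,J,K,L}$ explicitly, where $F_{I,J,K,L}$ is the symmetric F-curve associated to a $4$-partition of $[n]$ with part sizes $(i,j,k,\ell)$ summing to $n$. By Lemma \ref{L:functorial}, this intersection equals $E_m(i) + E_m(j) + E_m(k) + E_m(\ell) - E_m(i+j) - E_m(i+k) - E_m(j+k)$, with all arguments read in $\ZZ_m$. Splitting $E_m = A_m + m\cdot\delta_0$ and applying \eqref{E:standard-function}, one obtains a closed-form expression depending only on the residues $(\bar\imath, \bar\jmath, \bar k, \bar\ell) \in \ZZ_m^4$ (which sum to $0$ mod $m$). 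A direct enumeration identifies the null F-curves: the richest family consists of those with $\bar\imath, \bar\jmath, \bar k, \bar\ell$ all nonzero and $\bar\imath + \bar\jmath + \bar k + \bar\ell \in \{m, 3m\}$, together with certain balanced configurations when the sum equals $2m$.

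The second step is to translate the null conditions into linear equations on the coefficients of a general symmetric divisor $D' = \sum_r a_r\Delta_r$. Using the function $p_{D'}\co \ZZ_n\to\QQ$ from \eqref{E:pd-function}, one has $D'\cdot F_{I,J,K,L} = p_{D'}(i) + p_{D'}(j) + p_{D'}(k) + p_{D'}(\ell) - p_{D'}(i+j) - p_{D'}(i+k) - p_{D'}(j+k)$, so each null F-curve yields one linear equation among the $a_r$. The heart of the argument is to show that $p_{D'}$ must factor through $\ZZ_n \twoheadrightarrow \ZZ_m$. The main tool is to compare null F-curves sharing the same residue pattern mod $m$ but differing by a transfer of a block of size $m$ between two parts (e.g.\ $(i,j,k,\ell)$ versus $(i+m, j-m, k, \ell)$); both are null whenever one is, so subtracting their equations gives an $m$-periodicity identity for $p_{D'}$. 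Iterating these transfers across all admissible quadruples --- possible precisely because $m \mid n$ and $n$ is sufficiently large --- should pin down the periodicity globally.

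Once $p_{D'}$ is known to descend to a function on $\ZZ_m$, the remaining system is a finite-dimensional problem: show that, among symmetric functions $\ZZ_m\to\QQ$, only scalar multiples of $E_m$ satisfy the residue-level F-curve equations. This should reduce to a balancedness-type uniqueness for $Q_{E_m}$ in the spirit of Proposition \ref{P:cyclic=balanced}, where the primality of $m\geq 5$ is essential to rule out spurious $\ZZ_m$-symmetries that would enlarge the kernel. The main obstacle I anticipate is the periodicity reduction: verifying that the block-transfer relations generated by the null F-curves span \emph{all} needed periodicities requires a careful combinatorial lemma, especially as $m$ grows. This is the same technical bridge that was crossed for $m=3$ in \cite{fedorchuk-cyclic}, and it seems to be the deepest part of the conjecture.
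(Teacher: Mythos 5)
The statement you are trying to prove is not a theorem of the paper at all: it is stated as an open \emph{conjecture} (the paper explicitly says ``We believe the following'' and proves nothing beyond citing the $m=3$ case from \cite{fedorchuk-cyclic}). So there is no proof in the paper to compare against, and any complete argument you produced would be new mathematics. What you have written is a plan, not a proof, and you say as much yourself: the two load-bearing steps are both left open. First, the ``periodicity reduction'' --- showing that the linear relations imposed by the null F-curves force $p_{D'}$ to descend to $\ZZ_m$ --- is asserted to follow from block-transfer relations ``iterated across all admissible quadruples,'' but no such combinatorial lemma is proved, and you flag it as the main obstacle. Second, the residue-level uniqueness (``only scalar multiples of $E_m$ satisfy the residue-level equations'') is said to ``reduce to a balancedness-type uniqueness,'' which is not an argument; note also that Proposition \ref{P:cyclic=balanced} concerns effectivity/semiampleness, not uniqueness of solutions to a linear system, so the analogy does not by itself yield anything. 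Your remark that primality of $m$ rules out ``spurious $\ZZ_m$-symmetries'' is speculation rather than a mechanism; the more likely role of primality is to prevent $E_m$ from decomposing as a sum of nef classes pulled back through quotients $\ZZ_m\to\ZZ_{m'}$, and your sketch does not engage with that.

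The framework you set up is sound as far as it goes: the sufficient criterion for extremality (null F-curves spanning a codimension-one subspace of $N_1(\M_{0,n}/\Sm_n)\otimes\QQ$, which has dimension $\lfloor n/2\rfloor-1$) is correct, the intersection formula $D\cdot F_{I,J,K,L}=E_m(i)+E_m(j)+E_m(k)+E_m(\ell)-E_m(i+j)-E_m(i+k)-E_m(j+k)$ follows from Lemma \ref{L:functorial}, and this is indeed the strategy used in \cite{agss} and \cite{fedorchuk-cyclic} for related extremality statements. But even the enumeration of null F-curves is only sketched (``certain balanced configurations when the sum equals $2m$''), and the $E_m(0)=m$ correction terms relative to $A_m$ make that enumeration genuinely delicate. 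As it stands, the proposal identifies the right reduction but proves neither of the two steps that constitute the actual content of the conjecture.
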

We do not know whether divisors given by Theorem \ref{T:new-nef} 
are in general semiample.  
\end{example}

\section{Applications and examples}
\label{S:examples}

\subsection{Applications to conformal blocks divisors}
\label{S:examples-CB}
We have already seen in Example \ref{E:typeA} that 
the semiampleness of all $\sl_m$ level $1$ conformal blocks divisors studied in \cite{gian-gib}
follows from the balancedness of the quadratic form  
\[
x_0^2+x_1^2+\cdots+x_{m-1}^2.
\]
In fact, Theorem \ref{P:semiample} implies that all such divisors are boundary semiample over $\spec \ZZ$.
By \cite{kazanova}, the above result implies that all symmetric type A conformal blocks vector bundles
of rank $1$ are also boundary semiample over $\spec \ZZ$. 

There are several other notable connections of our results with the theory of conformal blocks
divisors on $\M_{0,n}$. For example, by applying the Effective Boundary Lemma \ref{L:main},
Mukhopadhyay showed that all level $1$ conformal blocks divisors of type B and D are effective boundary
\cite{swarnava}. It would be interesting to determine whether the associated quadratic forms 
of these divisors enjoy any balancedness properties. 

In \cite{GJMS}, Gibney, Jensen, Moon, and Swinarski 
introduce and study Veronese quotient divisors on $\M_{0,n}$ and relate
them to higher level $\sl_2$ conformal blocks divisors. 
For example, for every positive integer $g$ and an integer $1\leq \ell \leq g$, they consider
the following Veronese quotient divisors on $\M_{0,2g+2}$ \cite[Example 2.13]{GJMS}:
\[
D(g,\ell):=\mathcal{D}_{\frac{\ell-1}{\ell+1}, \left(\frac{1}{\ell+1}\right)^{g+1}}.
\]
It follows from the formulae for the intersection numbers of this divisor with the F-curves
$F_{1,1,a}$ given in \cite[Corollary 2.2]{GJMS} and Remark \ref{R:F-basis} that the associated quadratic form $q_{D(g,\ell)}$ is weakly balanced
if and only if $\ell$ is odd. Therefore, we conclude that 
$D(g,\ell)$ is cyclic semiample for all odd $\ell$. 
Applying \cite[Corollary 4.5]{GJMS}, we deduce that the $\sl_2$ level $\ell$ conformal blocks divisor 
$\mathbb{D}(\sl_2, \ell, \omega_1^{2g+2})$ is cyclic semiample on $\M_{0,n}$ for all odd $\ell$.

Computations show that the F-nef function $f_{D(g,\ell)}$ often satisfies tree-effectivity 
criterion of Proposition \ref{P:tree-effective} for even values of $\ell$. Determining whether $f_{D(g,\ell)}$ is 
tree-effective for these divisors would be another interesting problem.

\subsection{Democratic weighting}
While the major focus of this paper has been on proving semiampleness of 
divisors by proving tree-effectivity of the associated function, it is often easier
to verify effectivity (or weak-effectivity) of a function. One obvious way to do this is as follows.

Given a function $f\co G \ra \QQ$ and a $G$-$n$-tuple $(d_1,\dots,d_n)$,
we define the \emph{democratic weighting} $w_{\dem}$ on $\Gamma([n])$ by the following recipe.
Set
\[
\Sigma:=\sum_{i=1}^{n} f(d_i),
\]
and 
\[
c_i:=\frac{1}{n-2}f(d_i)- \frac{1}{2(n-1)(n-2)}\Sigma.
\]
We now set $w_{\dem} (i\sim j)=c_i+c_j$. It is easy to check
that the flow through each vertex $i$ is $f(d_i)$. 
Next, consider a partition $I\sqcup J=[n]$ with $\card(I)=k$ and $\card(J)=n-k$.
The $w$-flow across $I\sqcup J$ is then
\begin{multline*}
\frac{k}{n-2}\sum_{j\in J} f(d_j)+\frac{n-k}{n-2}\sum_{i\in I} f(d_i) -\frac{k(n-k)}{(n-1)(n-2)}
\Sigma \\
=\left(\frac{k}{n-2}-\frac{k(n-k)}{(n-1)(n-2)}\right)\sum_{j\in J} f(d_j)
+\left(\frac{n-k}{n-2}-\frac{k(n-k)}{(n-1)(n-2)}\right)\sum_{i\in I} f(d_i) \\
=\frac{k(k+1)}{(n-2)(n-1)} \sum_{j\in J} f(d_j)
+\frac{(n-k)(n-k-1)}{(n-1)(n-2)}\sum_{i\in I} f(d_i). 
\end{multline*}

\begin{prop}\label{P:easy}
Suppose $f\co G \ra \QQ$ is an F-nef function such that 
\[
f(a)+f(b) \geq \frac{3}{2}f(a+b) \quad \text{for all $a,b \in G$}.
\]
Then $f$ is an effective function. In particular,
$\DD\bigl(G, f; (d_1,\dots, d_n)\bigr)$ is nef on $\M_{0,n}$ 
for all $G$-$n$-tuples $(d_1,\dots,d_n)$.
\end{prop}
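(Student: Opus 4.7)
The plan is to prove the effectivity of $f$ by showing that the democratic weighting $w_{\dem}$ on $\Gamma([n])$ constructed in the preceding paragraphs witnesses Definition \ref{D:effective-with-respect}(1) for every $G$-$n$-tuple $\vec{d}=(d_1,\dots,d_n)$. Once this is done, the nefness of $\DD(G,f;(d_1,\dots,d_n))$ follows immediately from Theorem \ref{P:semiample}, since stratal effective boundedness (inherited from the effectivity of $f$ via the restriction to boundary strata given by Lemma \ref{L:functorial}) implies nefness.

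Before turning to the partition inequality, I would extract two elementary consequences of the hypothesis. Setting $a=b=0$ yields $f(0)\geq 0$, and setting $b=-a$ together with the symmetry $f(-a)=f(a)$ yields $2f(a)\geq \tfrac{3}{2}f(0)\geq 0$; hence $f\geq 0$ on all of $G$. I would then establish by induction on $|S|\geq 2$ the subadditive estimate
\[
\sum_{i\in S} f(d_i)\;\geq\;\tfrac{3}{2}\,f\!\left(\vec{d}(S)\right),
\]
whose base case $|S|=2$ is exactly the hypothesis, and whose inductive step splits $S$ as $S'\sqcup\{i_0\}$ and combines the inductive bound $\sum_{S'}f\geq \tfrac{3}{2}f(\vec{d}(S'))$ with the pairwise hypothesis $f(\vec{d}(S'))+f(d_{i_0})\geq \tfrac{3}{2}f(\vec{d}(S))$, using $f\geq 0$ to swallow the slack term.

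Next I would verify the two effectivity conditions for $w_{\dem}$. The vertex-flow condition $w_{\dem}(i)=f(d_i)$ is precisely what the democratic weighting was engineered to satisfy. For the partition-flow condition, the preceding computation expresses $w_{\dem}(I\mid J)$ as an explicit convex combination of $\sum_I f$ and $\sum_J f$ with coefficients depending only on $n$ and $k=|I|$; in the boundary cases $k\in\{1,n-1\}$ this combination evaluates to exactly $f(\vec{d}(I))$, while in the interior range $2\leq k\leq n-2$ the subadditive estimate applied to each of $I,J$ reduces the required inequality $w_{\dem}(I\mid J)\geq f(\vec{d}(I))$ to a purely arithmetic statement in $n$ and $k$.

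The hardest step will be the middle regime $k\approx n/2$, where the plain constant $\tfrac{3}{2}$ in the subadditive estimate is too weak for large $n$. To close the gap there, I would bring the full F-nef condition into play alongside the pairwise hypothesis: for a triple $S$, the F-nef inequality $\sum_{i\in S}f(d_i)+f(\vec{d}(S))\geq \sum_{\{i,j\}\subset S}f(d_i+d_j)$, summed with the three instances $f(d_i+d_j)+f(d_k)\geq \tfrac{3}{2}f(\vec{d}(S))$ of the hypothesis, yields the strengthened bound $\sum_Sf\geq \tfrac{7}{4}f(\vec{d}(S))$; iterating by splitting larger subsets into pairs and triples supplies constants large enough that the arithmetic inequality holds throughout $2\leq k\leq n-2$, completing the verification that $w_{\dem}$ is an effective weighting and hence that $f$ is effective.
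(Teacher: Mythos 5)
Your proposal follows the paper's route exactly: the paper's entire proof consists of the assertion that ``it is easy to see using the democratic weighting that $f$ is effective,'' followed by an appeal to Theorem \ref{P:semiample}, and you supply precisely that verification. Your diagnosis of the delicate point is accurate --- the plain iterated bound $\sum_{i\in S}f(d_i)\geq\frac{3}{2}f(\vec{d}(S))$ really is insufficient for partitions with $|I|\approx n/2$ once $n\geq 5$, and your boosted constants ($\frac{7}{4}$ for triples via F-nefness, $\frac{9}{4}$ for subsets of size at least $4$ by splitting into pairs and triples) do make the required arithmetic inequality $(n-k)(n-k-1)c_{k}+k(k-1)c_{n-k}\geq(n-1)(n-2)$ hold for all $2\leq k\leq n-2$.
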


\begin{proof}
It is easy to see using the democratic weighting that $f$ is effective with respect
to any $G$-$n$-tuple. The claim follows by Theorem \ref{P:semiample}.
\end{proof}

\begin{definition}
We say that a divisor $D=\sum_{r=2}^{\nhalf} a_r \Delta_r$ on $\M_{0,n}$
is \emph{democratic} if for some symmetric function $f\co \ZZ_n \to \QQ_{\geq 0}$ 
such that $D=\DD(\ZZ_n, f; (1)_n)$ and for all $k$-tuples of positive integers $\vec{d}=(d_1,\dots,d_k)$
satisfying $d_1+\cdots+d_k=n$, the democratic weighting proves that $f$ is effective 
with respect to $\vec{d}$. Note that by Theorem \ref{P:weakly-semiample}, a democratic divisor is stratally effective boundary, hence nef. 
\end{definition}

\subsection{Experimental results}
\label{S:experiments}
We wrote a simple worksheet in SAGE \cite{sage} (which in turn uses PARI/GP \cite{pari} for certain computations
with quadratic forms), available at \\ \url{https://www2.bc.edu/maksym-fedorchuk/Semiampleness-criterion.sws},
\\ that can compute, among other things, whether:
\begin{itemize}
\item A positive definite cyclic quadratic form is good.\footnote{Unfortunately, SAGE or PARI/GP 
quickly runs out of memory for more interesting quadratic forms.} 
\item A given symmetric divisor on $\M_{0,n}$ satisfies semiampleness 
criterion of Theorem \ref{T:semiample}.
\item A given symmetric divisor on $\M_{0,n}$ satisfies semiampleness 
criterion of Theorem \ref{T:2nd-semiample}.
\item A given symmetric divisor on $\M_{0,n}$ is democratic, hence nef. 
\end{itemize}

We compute the number of extremal rays of the symmetric F-cone of $\M_{0,n}$ 
that are semiample by Theorems \ref{T:semiample} and \ref{T:2nd-semiample} in Table \ref{table-experiments}.
For the remaining extremal rays, we check whether they are democratic, hence nef. 
The output for a given $n$ is obtained by running the command {\tt SemiampleTest($n$)} of the worksheet.

\begin{table}[th] 
\renewcommand{\arraystretch}{1.1}
\begin{tabular}{|c|c|c|c|c|}
\hline
\multirow{4}{0.3in}{\ \ $n$  } & \multirow{4}{1.1in}{\footnotesize{Extremal rays of the symmetric F-cone} } & 
\multirow{4}{1.3in}{\footnotesize{Cyclic semiample,} \ i.e., satisfy\ Theorem \ref{T:semiample}\ } &   
\multirow{4}{1.4in}{\footnotesize{Satisfy Theorem \ref{T:2nd-semiample}, \ but not cyclic semiample}} &   
\multirow{4}{1.3in}{\footnotesize{Democratic divisors among the remaining extremal rays }}\\
& & & &  \\
& & & & \\
& & & & \\ 
\hline
$8$ & $4$ & $3$ & $1$ & $0$ \\
$9$ & $4$ & $3$ & $0$ & $1$
\\
$10$ & $7$ & $6$ & $1$ & $0$ \\
$11$ & $10$ & $6$ & $0$ & $4$ \\
$12$ & $10$ & $6$ & $1$ & $3$ \\
$13$ & $18$ & $9$ & $0$ & $9$ \\
$14$ & $27$ & $13$ & $1$ & $13$ \\
$15$ & $26$ & $11$ & $0$ & $15$\\
$16$ & $74$ & $19$ & $7$ & $48$\\
$17$ & $113$ & $22$ & $0$ & $84$\\

\hline
\end{tabular}
\smallskip

\caption{Experimental results}
\label{table-experiments}
\end{table}

Referring to the table above, we obtain the following result, which extends
the validity of the symmetric F-conjecture to positive characteristic for $n\leq 16$ (recall that 
in characteristic $0$, it is known for $n\leq 24$ by \cite{Gib}):
\begin{theorem} Over $\spec \ZZ$, we have that every symmetric F-nef divisor on $\M_{0,n}$  
is stratally effective boundary (hence nef over any field) for $n\leq 16$.
Moreover, all symmetric nef divisors on $\M_{0,8}$ and $\M_{0,10}$ are semiample.
\end{theorem}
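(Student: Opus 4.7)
The plan is to reduce the statement to a finite computation on the extremal rays of the symmetric F-cone and then to verify, ray by ray, that each extremal ray satisfies one of the three effectivity criteria developed in the paper. Since the symmetric F-cone is a rational finite polyhedral cone (its defining inequalities are the intersection conditions against F-curves, and there are only finitely many F-curve classes up to $\Sm_n$-symmetry), it suffices to prove stratal effectivity of a set of generators of its extremal rays, and this property is preserved under positive linear combinations. So the first step is, for each $n \leq 17$, to enumerate the $\Sm_n$-symmetric F-curve inequalities on the basis $\Delta_2,\dots,\Delta_{\nhalf}$ and run a standard convex-hull computation (in practice via the SAGE worksheet referred to in \S\ref{S:experiments}) to produce a list of representatives of the extremal rays. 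For $n\leq 16$ the table records that these lists have size at most $74$.

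Next, for each listed ray $D=\sum a_r\Delta_r$, the plan is to run the three tests in order of increasing cost: first the cyclic semiampleness criterion of Theorem \ref{T:semiample}, which amounts to verifying the finitely many linear inequalities \eqref{E:weakly} on $2^n$ subsets; second, the criterion of Theorem \ref{T:2nd-semiample}, which requires choosing $\lambda\geq \lambda_{\fnef}(D)$ and checking the quadratic inequalities \eqref{E:sufficient-strong} for the associated F-nef function $f_\lambda$; and third, for the remaining rays, the democratic weighting test of Proposition \ref{P:easy} (equivalently, the subadditivity inequality $f(a)+f(b)\geq \frac{3}{2}f(a+b)$ for the associated F-nef function). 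The first two tests deliver boundary semiampleness, hence stratal effectivity; the third delivers stratal effectivity, hence nefness in arbitrary characteristic. The content of the theorem is precisely the assertion that Table \ref{table-experiments} exhausts every extremal ray for $n\leq 16$, and that for $n=8$ and $n=10$ no ray falls into the democratic-only column, so each ray is verified semiample.

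For the second sentence of the theorem, the plan is to inspect the $n=8$ and $n=10$ rows of the table: in both cases every extremal ray is certified either cyclic semiample (Theorem \ref{T:semiample}) or by the tree-effectivity criterion of Theorem \ref{T:2nd-semiample}. Both criteria, via Theorems \ref{P:semiample} and \ref{P:weakly-semiample}, produce boundary semiampleness, which is a fortiori semiampleness. Taking positive linear combinations preserves semiampleness, so the entire symmetric nef cone of $\M_{0,8}$ and $\M_{0,10}$ consists of semiample divisors, which concludes the proof modulo the computation.

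The main obstacle is essentially computational rather than conceptual: the extremal ray enumeration for $n=16,17$ already produces on the order of $10^2$ rays and the $2^n$ subset test in Theorem \ref{T:semiample} approaches the limit of what the worksheet can handle in reasonable time. A secondary subtlety is that one must run the three tests in the specific refined order used to populate the table, since, as noted in \S\ref{S:2nd-semiampleness}, Theorems \ref{T:semiample} and \ref{T:2nd-semiample} cover genuinely different subcones and neither subsumes the other; likewise democratic weighting certifies some rays that fail both semiampleness tests. All of this, however, is packaged into the routine \texttt{SemiampleTest($n$)}, so the proof reduces to reporting the output of that routine for $n\leq 17$ and extracting the slightly stronger semiampleness conclusion from the $n=8,10$ runs.
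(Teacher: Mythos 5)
Your proposal is correct and follows essentially the same route as the paper: the paper's ``proof'' is precisely the reduction to the extremal rays of the symmetric F-cone followed by the computation reported in Table \ref{table-experiments}, where for $n\leq 16$ every ray is certified by Theorem \ref{T:semiample}, Theorem \ref{T:2nd-semiample}, or the democratic weighting test, and for $n=8,10$ the democratic-only column is empty so every ray is boundary semiample. Your observations that positive linear combinations preserve stratal effectivity and semiampleness, and that the $n=17$ row does not exhaust its $113$ rays (which is why the theorem stops at $16$), match the paper's intent exactly.
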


\subsection*{Acknowledgements.} 
The author was partially supported by NSF grant DMS-1259226 and a Sloan Research Fellowship.
We thank Ana-Maria Castravet, Anand Deopurkar, Anand Patel,
and Jenia Tevelev for stimulating discussions. We also thank  
Ian Morrison for generous comments and suggestions that improved readability of
the paper. A portion of this work was completed when the author visited 
the Max Planck Institute for Mathematics in Bonn in June 2014.

\newcommand{\netarxiv}[1]{\href{http://arxiv.org/abs/#1}{{\sffamily{\texttt{arXiv:#1}}}}}


\bibliographystyle{alpha} 
\bibliography{ref-semiample}

\end{document}